\let\origsection=\section \def\section{\@ifstar{\origsection*}{\mysection}} 
\def\mysection{\@startsection{section}{1}\z@{.7\linespacing\@plus\linespacing}{.5\linespacing}{\normalfont\scshape\centering\S}}
\numberwithin{equation}{section}
\numberwithin{figure}{section}
\newcommand{\arc}[2]{$#1$-$#2$ cutline}
\definecolor{myred}{rgb}{.9,.1,.2}
\newtheorem{theorem}{Theorem}[section]
\numberwithin{theorem}{section}
\newtheorem{lemma}[theorem]{Lemma}  
\newtheorem{cor}[theorem]{Corollary}
\newtheorem{conj}[theorem]{Conjecture}
\theoremstyle{definition}
\newtheorem{defn}[theorem]{Definition}
\theoremstyle{remark}
\renewcommand{\triangleleft}{\vartriangleleft}
\renewcommand{\leq}{\leqslant}
\renewcommand{\geq}{\geqslant}
\renewcommand{\rho}{\varrho}
\newcommand{\nottriangleleft}{\not\kern-1pt\mathrel{\triangleleft}}
\newcommand{\cut}{\ensuremath{\mathrm{Cut}}}
\newcommand{\id}{\ensuremath{\mathrm{id}}}
\newcommand{\Int}[1]{\mathring{#1}}
\newcommand{\cutline}{cutline}
\pgfplotsset{compat=1.14}
\begin{document}
\title{Bounding the cop number of a graph by its genus}   
\author{Nathan Bowler}
\address{Nathan Bowler, University of Hamburg, Department of Mathematics, Bundesstra{\ss}e 55, 20146 Hamburg, Germany}
\email{nathan.bowler@uni-hamburg.de}
\author{Joshua Erde}
\address{Joshua Erde, Graz University of Technology, Institute of Discrete Mathematics, Steyrergasse 30, 8010 Graz, Austria}
\email{erde@math.tugraz.at}
\author{Florian Lehner}
\thanks{Florian Lehner acknowledges the support of the Austrian Science Fund (FWF), grant J 3850-N32}
\address{Florian Lehner, Graz University of Technology, Institute of Discrete Mathematics, Steyrergasse 30, 8010 Graz, Austria}
\email{florian.lehner@tugraz.at}
\author{Max Pitz}
\address{Max Pitz, University of Hamburg, Department of Mathematics, Bundesstra{\ss}e 55, 20146 Hamburg, Germany}
\email{max.pitz@uni-hamburg.de}

\begin{abstract}
It is known that the cop number $c(G)$ of a connected graph $G$ can be bounded as a function of the genus of the graph $g(G)$. The best known bound, that $c(G) \leq \left\lfloor \frac{3 g(G)}{2}\right\rfloor + 3$, was given by Schr\"{o}der, who conjectured that in fact $c(G) \leq g(G) + 3$. We give the first improvement to Schr\"{o}der's bound, showing that $c(G) \leq \frac{4g(G)}{3}  + \frac{10}{3}$.
\end{abstract}

\maketitle

%NOTE FOR WHOEVER SUBMITS THIS: We got an email from Liu Mingrui asking for this preprint, and we promised to send it to them when it is done. So please send the paper to lmr16@mails.tsinghua.edu.cn
%(JOSH) : Also, Imre asked me to send him a copy (just so I remember)
%(Josh) : Also peter_bradshaw@sfu.ca e-mailed asking for a pre-print

\section{Introduction}
The game of \emph{cops and robbers} was introduced independently by Nowakowski and Winkler \cite{NW83} and Quillot \cite{Q78}.
%The Quillot reference varies sometimes in the literature, this is the one Quillot uses himself. 
The game is a pursuit game played on a connected graph $G=(V,E)$ by two players, one player controlling a set of $k \geq 1$ \emph{cops} and the other controlling a \emph{robber}. Initially, the first player chooses a starting configuration $(c_1,c_2,\ldots,c_k) \in V^k$ for the cops and then the second player chooses a starting vertex $r \in V$ for the robber. The game then consists of alternating moves, one move by the cops and then a subsequent move by the robber. For a cop move, each cop may move to a vertex adjacent to his current location, or stay still, and the same goes for a subsequent move of the robber. Note that each cop may change his position in a move, and that multiple cops may occupy the same vertex. The first player wins if at some time there is a cop on the same vertex as the robber, otherwise the robber wins. We define the \emph{cop number} $c(G)$ of a graph $G$ to be the smallest number of cops $k$ such that the first player has a winning strategy in this game. 

Bounding the cop number $c(G)$ in terms of invariants of the graph $G$ is a well-studied problem (see for example \cite{BN11}). Of particular interest is Meyniel's Conjecture that $c(G) = O(\sqrt{n})$ holds for every graph $G$ on $n$ vertices. Currently the best known bound is
\[
c(G) = O\left( \frac{n}{2^{(1-o(1))\sqrt{\log{n}}}} \right)
\]
proved independently by  Frieze, Krivelevich and Loh \cite{FKL12}, Lu and Peng \cite{LP12}, and Scott and Sudakov \cite{SS11}.

In another direction, the cop number has been studied with regard to topological properties of the graph (see \cite{BM17} for a recent survey). As an early example of this Aigner and Fromme \cite{AF84} showed that every planar graph has cop number at most three. For a generalisation of this result, consider graphs with bounded genus. Given a graph $G$ let us write $g(G)$ for the \emph{genus} of $G$, that is, the smallest $k$ such that $G$ can be drawn on an orientable surface of genus $k$ without crossing edges. For $g \in \mathbb{N}$ we define
\[
c(g) := \max \{ c(G) \mid g(G) = g \}.
\]

Using similar ideas to  \cite{AF84}, Quillot \cite{Q85} showed that $c(g) \leq 2 g +3$. These methods were refined by Schr\"{o}der to give the following bound, which is currently the best known.
\begin{theorem}[Schr\"{o}der \cite{S01}]\label{t:schr}
For every $g \in \mathbb N$, we have
\[
c(g) \leq \left\lfloor \frac{3 g}{2}\right\rfloor + 3.
\]
\end{theorem}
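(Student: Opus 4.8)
The plan is to prove the bound by induction on the genus, using as the main engine the classical guarding lemma of Aigner and Fromme: in any connected graph a single cop can guard a geodesic (shortest) path $P$, meaning that after finitely many moves the cop can always occupy the vertex of $P$ nearest to the robber, so that the robber is captured the instant it steps onto $P$. A guarded path thus behaves like a wall that the robber may never cross. The base case $g=0$ is the planar theorem of Aigner and Fromme, in which three cops suffice; this accounts for the additive constant $+3$, and the whole argument is organised so as to reduce, handle by handle, to this planar situation.

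For the inductive step I would maintain the invariant that some cops guard a connected subgraph $H$ of $G$, that the robber is confined to the closure of a single component $R$ of $G - H$, and that $R$ together with its boundary embeds in an orientable surface of genus $g'$. The goal of one reduction phase is to spend a small fixed number of additional cops to shrink $g'$ while restoring the invariant. The cheapest naive reduction (essentially Quillot's) guards a non-contractible cycle obtained as the union of two geodesics between a common pair of vertices $u,v$; two cops then cut the surface and lower the genus by one, which only yields $c(g)\leq 2g+3$. To obtain the coefficient $\tfrac{3}{2}$ I would instead process two handles at once: choose vertices $u,v$ in the robber's territory together with three internally disjoint geodesics between them, forming a subgraph of first Betti number two. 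If these three geodesics can be selected so that the two independent cycles they carry are homologically independent and non-separating, then cutting along this guarded theta-like graph lowers the genus by two at a cost of only three cops, i.e.\ $\tfrac{3}{2}$ cops per unit of genus.

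I would then assemble the phases: repeatedly reduce the genus by two (three cops per phase) until at most one handle remains; dispose of a single leftover handle with a cheaper two-geodesic cut; and finish with the planar three-cop strategy. Careful bookkeeping — in particular recycling cops that fall idle once the territory separates, so that guards of already-cut handles can be reassigned to the next phase — is what makes the totals telescope to $\left\lfloor \tfrac{3g}{2}\right\rfloor + 3$ rather than to something larger, and is also what produces the floor in the odd-genus case.

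The main obstacle, and the step I expect to be most delicate, is the genus-reduction phase itself: one must guarantee that in any positive-genus territory there really do exist three geodesics between a common pair of vertices whose union lowers the genus by exactly two, and that these can be guarded simultaneously without interfering. Two technical hazards lurk here. First, geodesics of the territory $R$ need not be geodesics of the ambient graph $G$, so one must arrange that $H$ (and hence $R$) is isometrically embedded, or otherwise reconcile the two distance functions, before the Aigner–Fromme guarding can be invoked. Second, the homological independence and non-separation of the chosen cycles must be read off from the embedding, and one must check that after cutting the robber is again confined to a single component whose boundary lies on the newly enlarged guarded subgraph, so that the invariant is restored and the induction can proceed.
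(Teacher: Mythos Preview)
The paper does not prove this theorem; it is quoted as a known result due to Schr\"oder \cite{S01}, and the paper's own contribution is the stronger bound of Theorem~\ref{t:main}. So there is nothing in the paper to compare your attempt against beyond the one-sentence summary in the introduction: ``Schr\"oder's improvement was to find a strategy such that three cops could be used to reduce the genus by at least two.''

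Your outline matches that summary, but as a proof it stops exactly where the real work begins. You correctly identify the obstacle yourself: you have not shown that in an arbitrary graph embedded on a surface of positive genus one can always find a pair $u,v$ and three geodesics between them whose union is homologically of rank two and non-separating. This existence statement is the heart of Schr\"oder's argument and is not automatic; geodesics between a fixed pair need not be internally disjoint, and even when they are, the two cycles they span can easily be homologous or separating. Your sketch gives no mechanism for producing such a configuration.

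Your bookkeeping is also not quite right. Without recycling, your scheme uses $3\cdot\frac{g-1}{2}+2+3=\frac{3g+7}{2}$ cops for odd $g$, which is one more than $\lfloor\tfrac{3g}{2}\rfloor+3=\frac{3g+5}{2}$. You invoke recycling to fix this, but you do not say which cops become idle or why; in the straight ``cut two handles, repeat'' scheme the guarded theta graphs accumulate and none of their guards is obviously releasable. Getting the floor right for odd $g$ requires a more careful argument than the sketch provides.
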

These proofs all hinge on the same basic idea that a single cop can `guard' a geodesic path in the graph $G$. Broadly, the strategy is to inductively find collections of geodesic paths (initially in $G$, but later in some subgraph, to which the cops have restricted the robber) such that, if we delete these paths, then each component has strictly smaller genus than before. After a fixed number of steps the robber is restricted to a planar graph, in which three further cops can catch him by the result of Aigner and Fromme.

In Quillot's proof, two cops are needed to restrict the robber to a subgraph whose genus is strictly smaller than that of the original graph. Schr\"{o}der's improvement was to find a strategy such that three cops could be used to reduce the genus by at least two. Perhaps the best we could hope for with a similar strategy would be to use a single cop to reduce the genus of the graph by one, and motivated by this Schr\"{o}der conjectured the following bound.

\begin{conj}[Schr\"{o}der \cite{S01}] For every $g \in \mathbb N$, we have
\[c(g) \leq g + 3.\]
\end{conj}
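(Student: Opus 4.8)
The plan is to reduce the conjecture to a single \emph{amortised} step in which one additional cop, by guarding one geodesic, confines the robber to a territory of strictly smaller genus. Concretely, I would run an induction not on $G$ itself but on the robber's current \emph{territory}: at each stage the cops already in play guard a fixed collection of geodesics, the robber is confined to one component $R$ of what remains, and $R$ is recorded as a graph cellularly embedded in a compact orientable surface $\Sigma$ with boundary, where the boundary circles are the ``shadows'' of the previously guarded geodesics. The inductive hypothesis would be that the robber can be caught in $R$ using at most $g(\Sigma) + 3$ cops, where $g(\Sigma)$ denotes the genus of $\Sigma$. The whole statement then follows once the inductive step is established, since the top-level territory is $G$ embedded in $\Sigma_{g(G)}$ with no boundary.

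The topological heart of the inductive step is to reduce $g(\Sigma)$ by exactly one at the cost of a single new cop, and here I would exploit the boundary created by the earlier cops. If $\Sigma$ has genus at least one, it carries an essential properly embedded arc $\alpha$ --- an arc with endpoints on $\partial\Sigma$ that is not boundary-parallel; cutting $\Sigma$ along such an $\alpha$ either reduces the genus by one or merely reduces the number of boundary components, and a \emph{non-separating} essential arc always drops the genus. The key point, and the source of the hoped-for efficiency over Quillot's two-cops-per-genus and Schr\"{o}der's rate of $3/2$, is that $\alpha$ only needs \emph{two} endpoints on the already-present boundary: the previously guarded geodesics supply the ``return path'' that the earlier proofs had to pay for with an extra cop. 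Thus a single new geodesic, playing the role of $\alpha$, should suffice to cut off a handle.

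It then remains to realise $\alpha$ by a guardable object and to run the base case. I would choose $\alpha$ to be a shortest path in $R$ between two vertices lying on the boundary shadows, so that the Aigner--Fromme projection lemma applies and one cop can guard it; one must then check that, once this geodesic is guarded, the robber is confined to a single component of the cut-open surface, which has genus $g(\Sigma)-1$ and inherits an enlarged boundary from $\alpha$. Iterating drives the genus down to zero, at which point the territory is planar (with boundary) and three further cops capture the robber directly by Aigner and Fromme~\cite{AF84}. The count is then $g(G)$ cops for the $g(G)$ genus-reducing steps plus $3$ for the planar finish, giving $c(G) \leq g(G) + 3$.

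The main obstacle --- and the reason the conjecture remains open, so that the present paper attains only the weaker rate $\tfrac{4g}{3} + \tfrac{10}{3}$ --- lies in forcing a \emph{single} geodesic to be simultaneously guardable and topologically essential. The Aigner--Fromme lemma requires $\alpha$ to be a shortest path in the current territory, whereas the genus-reduction argument requires $\alpha$ to be a non-separating essential arc; there is no a priori reason that a shortest path between two boundary vertices is essential, nor that an essential arc can be taken geodesic without distorting the distances on which guarding relies. Quillot and Schr\"{o}der sidestep exactly this mismatch by spending additional cops to ``close up'' geodesics into genuinely essential cycles, which is precisely what inflates their rates above $1$. A successful proof would therefore hinge on a structural result guaranteeing, in every graph of positive genus embedded with boundary, the existence of a shortest path that is simultaneously essential and non-separating --- the step that current techniques, including those of this paper, are unable to achieve in full.
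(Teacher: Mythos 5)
You are attempting to prove a statement that this paper explicitly leaves open: Schr\"oder's conjecture is stated here as a conjecture, the paper proves only the weaker bound $c(g) \leq \tfrac{4g}{3} + \tfrac{10}{3}$ (Theorem \ref{t:main}), and the conjecture is known only for $g \leq 3$ by \cite{L19}. Your proposal, by its own final paragraph, is not a proof either: the decisive step --- producing, in every territory of positive genus, a single shortest path that is simultaneously guardable (so that Lemma \ref{lem:guardshortestpath} applies) and a non-separating essential arc (so that the cut drops the genus) --- is missing, and you correctly observe that neither property implies the other. So the gap is genuine, and it is the one you name; what follows is why the paper's machinery shows it is not merely a technical lacuna.

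First, your accounting charges each genus-reducing cop once and then forgets her, but a cop guarding a geodesic must stay on duty as long as the corresponding boundary arcs bound the robber's territory; this is precisely what the \emph{active indices} of the Marker--Cutter game (Definition \ref{d:topgame}) track, and it is where the count inflates. Second, cutting along an arc with both endpoints on the existing boundary need not reduce the genus at all: by Lemma \ref{l:x=yorC=C'} the cut may disconnect the surface, splitting the genus between components rather than lowering it, and it is the \emph{robber} (Cutter, in the game) who chooses which component survives --- your inductive step silently assumes the favourable outcome. Third, and most tellingly, the lower bound in Theorem \ref{t:PC}, namely $v(S) \geq \tfrac{4g}{3} + 2$, shows that against \emph{any} Marker strategy Cutter can force roughly $\tfrac{4g}{3}$ boundary arcs to be simultaneously active; since the translation of geodesic-guarding strategies into cop strategies goes through this game (Theorem \ref{t:strat}), this indicates --- as the authors remark after Lemma \ref{lem:guardshortestpath} --- that $\tfrac{4g}{3}$ is the natural asymptotic limit of any argument whose only tool is guarding shortest paths. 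A proof of $c(g) \leq g + 3$ would therefore require a genuinely new guarding mechanism, not merely the structural existence result you ask for in your last paragraph: even granted a geodesic that is essential and non-separating at every step, the bookkeeping of released versus retained cops, formalised by the value of the topological game, would still force the count above one cop per handle.
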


We improve Quillot's and Schr\"oder's proof idea by presenting a strategy in which either 4 cops can reduce the genus by at least 3, or 8 cops can reduce the genus by at least 6, or otherwise 12 cops can reduce the genus by at least 9. Correspondingly, our main result of this paper improves the bound in Theorem \ref{t:schr} as follows:

\begin{restatable}{theorem}{main}\label{t:main}
For every $g \in \mathbb N$, we have
\[
c(g) \leq \frac{4g}{3}   + \frac{10}{3}.
\]
\end{restatable}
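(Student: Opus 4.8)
The plan is to follow the inductive framework of Quillot and Schröder but to execute the genus-reduction step more efficiently. Recall the core idea: a single cop can eventually come to \emph{guard} a geodesic (shortest) path $P$ in the graph, meaning that after finitely many moves the cop can always ensure that the robber never steps onto $P$ without being immediately caught. Once several such guarded paths are in place, they may be deleted from the graph, and the robber is confined to a single component $C$ of the remainder; the whole argument then recurses on $C$. The arithmetic of the final bound is driven entirely by the ratio achieved in one round: if $k$ cops can always reduce the genus by at least $r$ (while the remaining cops continue guarding their paths), one obtains $c(g) \le \tfrac{k}{r} g + O(1)$, with the additive constant coming from the base case. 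Schröder's Theorem~\ref{t:schr} corresponds to the ratio $k/r = 3/2$, and Aigner and Fromme's planar bound of $3$ supplies the base case. My target ratio is $4/3$, so the heart of the matter is a single round in which $4$ cops guarantee a genus drop of at least $3$.

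First I would set up the guarding machinery precisely: given an isometric path in the current graph, describe how one cop, after a bounded number of preliminary moves, projects the robber's position onto the path and thereafter mirrors it, using the fact that on a shortest path the distance-to-the-cop can never be decreased by the robber without entering the cop's reach. This is standard and I would quote or lightly adapt it. Next I would analyse the topology: after the robber is trapped in a region bounded by guarded geodesics, I want to produce a system of curves whose deletion cuts the genus. The key topological input is that a suitable system of essential, homotopically independent cycles or arcs, realised by geodesics and guarded by cops, reduces the genus of the surface carrying the component; the \texttt{Cut}/\texttt{Glue} operations hinted at by the paper's macros suggest the authors work directly with cutting the surface along these curves and tracking how the genus of each resulting piece decreases.

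The central new ingredient, and the step I expect to be the main obstacle, is the case analysis promised in the introduction: the claim that \emph{either} $4$ cops reduce the genus by at least $3$, \emph{or} $8$ cops reduce it by at least $6$, \emph{or} $12$ cops reduce it by at least $9$. Each of these alternatives realises the ratio $4/3$, so the statement is really that in every configuration one can find a guarded system of geodesics cutting the genus by three-quarters of the number of cops invested. I would attempt to prove this by examining how the guarded geodesics interact on the surface: in the most favourable case four well-chosen geodesics sever three handles at once, but certain degenerate configurations may only sacrifice a smaller proportion, forcing one to group the work into blocks of $4$, $8$, or $12$ cops to amortise the loss. Handling these degeneracies — ensuring that one of the three alternatives always applies, and that the cops can be allocated and reused across rounds without the additive overhead blowing up — is where the real difficulty lies and where a careful topological and combinatorial argument is needed.

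Finally I would assemble the induction. Writing $c(g)$ for the quantity being bounded, each round consumes a bounded number of cops to effect a genus drop realising the $4/3$ ratio, so that $c(g) \le \tfrac{4}{3}(g - r) + c'$ for the appropriate residual, and iterating down to the planar base case $c(0) \le 3$ yields $c(g) \le \tfrac{4g}{3} + \tfrac{10}{3}$ after checking that the constants match. The additive term $\tfrac{10}{3}$ should emerge from combining the Aigner–Fromme constant with the fractional slack left over when $g$ is not divisible by $3$; verifying that the floor/ceiling bookkeeping across the three alternatives never exceeds this constant is the routine-but-essential final check.
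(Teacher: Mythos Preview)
Your high-level plan is sound — geodesic guarding, recursion on genus, target ratio $4/3$ — but the proposal stops exactly where the real work begins: you name the trichotomy ``$4$ cops drop the genus by $3$, or $8$ by $6$, or $12$ by $9$'' as the main obstacle and then say only that you would ``attempt to prove this by examining how the guarded geodesics interact on the surface.'' That is not a proof, and a direct topological case analysis of this kind is precisely what the paper avoids, because it is not clear how to organise it. The paper's actual mechanism is quite different: it introduces an auxiliary two-player \emph{Marker--Cutter game} on the surface (Marker picks two points, Cutter picks a cutline between them and a component of the result), proves that a Marker strategy bounding the number of ``active'' cutlines by $t$ yields a cop strategy with $t+1$ cops, and then analyses a purely combinatorial version of this game with a potential function $p(\gamma)=4(g_0-g)-3v(\gamma)+\text{(cycle terms)}$. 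Marker's explicit strategy (a finite state diagram with twelve states) keeps this potential from decreasing, which is what forces the $4/3$ ratio. The $4$/$8$/$12$ trichotomy in the introduction is a summary of the cycle lengths in that diagram, not a standalone case split on the surface.

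Two further concrete gaps. First, you do not explain how cops are \emph{released}: the induction only closes because, once the robber is confined to one side of a cut, the geodesics bounding the discarded side no longer need guarding; the paper tracks this via ``active indices'' in the game, and without an analogous bookkeeping device your cop count accumulates rather than recycles. Second, the additive constant $\tfrac{10}{3}$ does not come from ``Aigner--Fromme plus fractional slack when $g\not\equiv 0\pmod 3$''; it comes from a tailored opening (four cops reaching a specific game state of potential $-3$, Lemma~\ref{l:initialplay}), followed by the potential-bounding phase, with one branch of the endgame invoking the bound $c(1)=3$ from \cite{L19} rather than the planar bound. Your proposed bookkeeping would at best recover a weaker additive term.
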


In order to prove Theorem \ref{t:main} we will introduce an auxiliary topological game, and relate this game to the cops and robber game. More precisely, given a compact connected orientable surface $S$, the topological game yields a \emph{value} $v(S) \in \mathbb N \cup \{\infty\}$, and we show that if a graph $G$ can be drawn on $S$, then $c(G) \leq v(S) + 1$. Thus, to prove Theorem \ref{t:main}, it would be sufficient to bound from above the score of a compact connected orientable surface $S$ of genus $g$ in an appropriate manner. In fact, we are able to find the exact value of such a surface, up to a small additive constant.
\begin{theorem}\label{t:PC}
Let $S$ be a compact connected orientable surface of genus $g$. Then 
\[
 \frac{4g}{3} +2 \leq v(S) \leq  \frac{4g}{3}  + \frac{10}{3}.
\]
\end{theorem}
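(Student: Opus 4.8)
The plan is to prove the two inequalities separately, with the upper bound carrying the main weight and directly mirroring the cop strategy sketched in the introduction. For the upper bound I would argue by strong induction on the genus $g$, extracting from the trichotomy announced in the introduction the recursive inequality
\[
v(S) \leq \min\Big\{\, v(S_3) + 4,\ v(S_6) + 8,\ v(S_9) + 12 \,\Big\},
\]
where $S_k$ denotes a surface obtained from $S$ by the genus-reducing operation that lowers the genus by at least $k$, and where at least one of the three operations is guaranteed to exist once $g$ is large enough. The point is that each of the three branches costs exactly $\tfrac{4}{3}$ units of value per unit of genus removed, so substituting the induction hypothesis $v(S_k) \leq \tfrac{4(g-k)}{3} + \tfrac{10}{3}$ into any single branch returns $v(S) \leq \tfrac{4g}{3} + \tfrac{10}{3}$; the additive constant $\tfrac{10}{3}$ is simply transported unchanged through the recursion and is fixed entirely by the base cases.

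The base of the induction is the planar (genus $0$) case, handled by a direct analysis of the game, which yields a bounded value consistent with the three cops of Aigner and Fromme via the inequality $c(G) \leq v(S) + 1$. Since every reduction removes a multiple of $3$ from the genus, each one preserves $g$ modulo $3$, so the induction cannot pass between residue classes; for this reason the genus-$1$ and genus-$2$ surfaces must also be checked by hand to seed the two nonzero residues. Once these finitely many base cases are verified to satisfy $v(S) \leq \tfrac{4g}{3} + \tfrac{10}{3}$, the recursion propagates the bound to all $g$.

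For the lower bound $v(S) \geq \tfrac{4g}{3} + 2$ I would exhibit a strategy for the evading player, most naturally through a monovariant: assign to each game position a potential proportional to the genus of the region of $S$ still accessible to the evader, and show that the evader can always respond so that a single action of the pursuer decreases this potential by at most the reciprocal of the $\tfrac{4}{3}$ rate. Starting from a potential proportional to $g$ and observing that the pursuer must drive it down to the genus-$0$ endgame before winning then forces at least $\tfrac{4g}{3} + 2$ units of value, with the additive $2$ coming from pinning down that endgame so that it matches the upper bound's base case.

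The principal obstacle is the trichotomy itself: proving that on every surface of genus at least $3$ one of the three prescribed reductions can be executed while the pursuer's guarded cutlines keep the evader confined to a region of the claimed smaller genus. This is the topological heart of the argument and is exactly where the improvement of Schr\"oder's ratio from $\tfrac{3}{2}$ to $\tfrac{4}{3}$ is won; it requires a delicate case analysis of how a small family of cutlines can be routed so that cutting along them strips away genus at the optimal rate, and in the awkward configurations where a reduction by $3$ using $4$ cutlines is blocked, one must show that committing $8$ or $12$ cutlines secures a reduction by $6$ or $9$, so that the $\tfrac{4}{3}$ efficiency is never degraded. By contrast, solving the recursion, verifying the small-genus base cases, and setting up the monovariant for the lower bound should be comparatively routine.
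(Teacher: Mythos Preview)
Your inductive scheme for the upper bound has two genuine gaps, and even with them repaired it would diverge substantially from the paper's argument.

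First, the recursion $v(S) \leq \min\{v(S_3)+4,\ v(S_6)+8,\ v(S_9)+12\}$ is not the right shape. Marker does not get to choose which of the three reductions occurs; Cutter's responses determine the branch. So the bound must hold for \emph{every} branch Cutter might force, i.e.\ a maximum rather than a minimum. (Since all three branches have the same $\tfrac43$ ratio this does not change the arithmetic, but it does mean the trichotomy must be established as ``whichever way Cutter plays, one of these happens'', not ``Marker may pick one''.) Second, and more damaging: $v(S)$ is the supremum of the number of active indices over the \emph{entire} play, including the intermediate steps of your macro-move. An inequality of the form $v(S)\leq v(S_k)+\tfrac{4k}{3}$ would require you to show that the active count never spikes above this during the $k$-step reduction; nothing in your outline controls that, and in fact controlling it is precisely the content of the paper's strategy.

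The paper does not argue by induction on $g$ at all. It first reduces the topological game to an equivalent combinatorial Marker--Cutter game on labelled boundary cycles (Lemma~\ref{l:gameequiv}), then introduces a single potential
\[
p(\gamma)=4(g_0-g)-3v(\gamma)+\sum_{p(C,\gamma)>0}p(C,\gamma),
\]
and proves both bounds by monotonicity of $p$. For the lower bound, Cutter has a strategy keeping $p$ non-increasing; since the genus must eventually reach $0$ and the cycle term is then at least $1$, rearranging gives $v(\gamma)\geq\tfrac{4g_0}{3}+2$ (Theorem~\ref{thm:comblowerbound}). For the upper bound, Marker follows an explicit $12$-state strategy (Figure~\ref{fig_Markerstrategy}) which, after a short preparatory phase dropping $p$ to at least $-5$, keeps $p$ non-decreasing while holding the cycle term at most $5$; rearranging gives $v(\gamma)\leq\tfrac{4g_0}{3}+\tfrac{10}{3}$ at every step (Theorem~\ref{thm:comupperbound}). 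The trichotomy you cite from the introduction is not an input to this argument but its \emph{output}: the three loops back to state~\fbox{1} in Figure~\ref{fig_Markerstrategy} have lengths $4$, $8$, $12$ and reduce $g$ by $3$, $6$, $9$ respectively. So the ``delicate case analysis'' you anticipate is exactly the verification of that figure, and the potential function is what lets you bound the score at every intermediate state without an inductive hypothesis.

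Your lower-bound sketch via a monovariant is in the right spirit and is essentially what the paper does, with the same potential $p$ serving both directions.
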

We note that, Theorem \ref{t:main} does not quite follow directly from Theorem \ref{t:PC}, since we can only show that $c(G) \leq v(S) +1$. However, with a more careful analysis of our strategy we can improve the constant term.

So far, Schr\"oder's conjecture is only known to be true for $g \leq 3$, see \cite{L19}. The only known exact values for $c(g)$ are $c(0) = 3$ due to Aigner and Fromme \cite{AF84}, and $c(1) = 3$ due to a recent result by Lehner \cite{L19}. The latter result shows that there are values of $g$ for which the bound in Schr\"oder's conjecture is not tight, and raises the question whether the bound is asymptotically optimal.

In fact, it is not clear whether there is even a linear lower bound. The only reference to non-trivial lower bounds for $c(g)$ in the literature that we could find come from the survey paper of Bonato and Mohar \cite{BM17} who give the following lower bound, which comes from a random construction of Mohar \cite{M08}.

\begin{theorem}[Mohar]
\[
c(g) \geq g^{\frac{1}{2} - o(1)}.
\]
\end{theorem}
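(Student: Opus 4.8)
The plan is to establish the bound by an explicit probabilistic construction: for infinitely many values of $g$ I would exhibit a graph $G$ of genus $g$ whose cop number is at least $g^{1/2-o(1)}$. Since the stated inequality is asymptotic (the $o(1)$ is measured as $g\to\infty$), it is enough to produce, for a sequence of integers $n\to\infty$, a graph $G$ on $n$ vertices with $g(G)\leq g$ and $c(G)\geq g^{1/2-o(1)}$, and then to realise the intermediate values of the genus by attaching a standard gadget that raises the genus to any prescribed larger value without decreasing the cop number. Thus the whole problem reduces to constructing graphs that are simultaneously \emph{sparse}, so as to keep the genus small, and \emph{hard to cop}, so as to force a large cop number.

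The genus side is the routine one. For a connected graph with $n$ vertices and $m$ edges, Euler's formula gives $g(G)\leq \tfrac{1}{2}(m-n+1)$, so any graph with $m=n^{1+o(1)}$ edges has genus $g(G)=n^{1+o(1)}$. Consequently it suffices to build, on $n$ vertices, a graph of near-linear size whose cop number is as close as possible to the Meyniel barrier $\sqrt{n}$: if such a graph has $c(G)\geq n^{1/2-o(1)}$ and $g(G)=n^{1+o(1)}$, then $c(G)\geq g(G)^{1/2-o(1)}$, which is exactly the asserted inequality.

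For the cop-number side I would use a random construction. Starting from a random regular (or suitably structured algebraic) graph of degree $d$ on $n$ vertices, I would delete one edge from each short cycle to obtain a graph $G$ of girth at least $\gamma$, losing only a lower-order fraction of the edges, so that $G$ still has degree $d^{1-o(1)}$ and retains strong expansion. A girth--degree lower bound of the type pioneered by Aigner--Fromme and Frankl --- namely that high girth together with large minimum degree forces the robber always to retain an escape move --- then yields $c(G)\gtrsim (d-1)^{\Theta(\gamma)}$. Optimising the degree--girth trade-off against the extremal Moore-type relation $n\approx d^{\,\gamma/2}$ between order, degree and girth drives the resulting exponent of $n$ towards $\tfrac12$, the deficit from the Moore bound being precisely what is recorded by the $o(1)$.

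The main obstacle is exactly this last step: one has to guarantee graphs that are simultaneously near-linearly sparse, so that the genus stays $n^{1+o(1)}$, \emph{and} have cop number within $n^{o(1)}$ of $\sqrt n$. This is delicate because the girth-based escape argument alone, fed with the high-girth constructions that are actually available, falls short of the Moore threshold; closing the gap requires either dense high-girth graphs approaching that threshold or, more robustly, a robber strategy that exploits the expansion of the random graph directly and can be shown to survive indefinitely against any set of fewer than $n^{1/2-o(1)}$ cops. Verifying that such an evasion strategy persists for all time, while simultaneously controlling the edge count so as to keep the genus small, is the crux of the argument.
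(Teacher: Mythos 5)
The statement you are proving is not proved in the paper at all: it is quoted from Mohar's random construction (via the Bonato--Mohar survey), so the relevant comparison is against that known argument. Your overall reduction does match it in spirit: by Euler's formula a connected graph with $n$ vertices and $m$ edges has genus at most $\tfrac12(m-n+1)$, so it suffices to produce graphs with $m=n^{1+o(1)}$ edges and cop number $n^{1/2-o(1)}$, and the interpolation to all genus values can indeed be handled by attaching handle-carrying blocks at cut vertices (genus is additive over blocks by Battle--Harary--Kodama--Youngs, and the cop number of a retract is at most that of the whole graph, so the attachment cannot destroy the lower bound). Up to this point your proposal is sound.

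The gap is the main lemma itself, and the one concrete route you offer for it provably cannot work. Frankl's girth--degree bound gives $c(G)>d^{t}$ only under girth at least $8t-3$, i.e.\ an exponent of roughly $\gamma/8$ in the girth $\gamma$; even against the Moore-optimal relation $n\approx d^{\gamma/2}$ this optimises to $c(G)\gtrsim n^{1/4}$, not $n^{1/2-o(1)}$, and the high-girth graphs actually available satisfy only $n\approx d^{3\gamma/4}$ or worse (especially after your edge deletions), giving exponents nearer $1/6$. So your claim that the degree--girth trade-off ``drives the exponent towards $\tfrac12$'' is quantitatively false: no amount of optimising Frankl's bound reaches the required exponent. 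What Mohar's construction actually rests on is the expansion-based evasion argument in the style of Bollob\'as--Kun--Leader for sparse random graphs of average degree $n^{o(1)}$: one shows that fewer than $n^{1/2-o(1)}$ cops cannot simultaneously block the robber's rapidly growing set of short escape routes, and that this evasion can be sustained forever. You correctly identify this as ``the crux'' but supply no proof of it, so as it stands the proposal is a valid reduction together with an unproven central lemma, whose only sketched justification (the girth route) falls short by a polynomial factor.
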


Mohar also goes on to conjecture that this bound is essentially tight

\begin{conj}[Mohar \cite{M08}]
For every $\epsilon > 0$ there exists $g_0$ such that, for every $g\geq g_0$
\[
g^{\frac{1}{2} - \epsilon} < c(g) < g^{\frac{1}{2} + \epsilon}.
\]
\end{conj}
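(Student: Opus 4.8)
The final displayed statement is \emph{Mohar's Conjecture}, an open problem, not a theorem with an author's proof to follow. So a ``proof proposal'' here is really a sketch of how one would attack the two-sided asymptotic bound $g^{1/2-\epsilon} < c(g) < g^{1/2+\epsilon}$. The lower bound $c(g) \geq g^{1/2 - o(1)}$ is already in hand via Mohar's random construction (stated just above), so the genuinely open and hard direction is the matching \emph{upper} bound $c(g) < g^{1/2+\epsilon}$ for large $g$. My plan is to treat these two halves separately.

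\textbf{The lower bound.} For the lower half it suffices to push the cited bound $c(g) \geq g^{1/2-o(1)}$ slightly, replacing the $o(1)$ decay by an honest $\epsilon$ for $g$ beyond some threshold $g_0(\epsilon)$; this is essentially a restatement. The plan is to exhibit, for each large $g$, a graph $G$ of genus at most $g$ with $c(G) > g^{1/2-\epsilon}$. The natural candidates are random graphs, following Mohar: one takes a suitable random (or explicit incidence) graph known to have high cop number relative to its size---for instance a graph meeting Meyniel-type lower bounds $c(G) = \Omega(\sqrt{n})$---and controls its genus via the Euler-formula bound $g(G) \leq |E| - |V| + 1$ together with an argument that the embedding genus is not much smaller than this. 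Balancing $n$ against $g$ so that $\sqrt{n} \approx g^{1/2-\epsilon}$ and $g(G) \leq g$ gives the claim. The main care is ensuring the chosen graphs have genus \emph{concentrated} near the edge-count bound rather than unexpectedly small.

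\textbf{The upper bound.} This is the serious obstacle and the reason the statement is a conjecture. The paper itself only establishes the \emph{linear} bound $c(g) \leq \tfrac{4g}{3} + \tfrac{10}{3}$ (Theorem~\ref{t:main}), which is hopelessly far from $g^{1/2+\epsilon}$; indeed Lehner's result $c(1) = 3$ shows the linear bound is not tight, but closing the gap from linear to square-root is exactly what no one knows how to do. A plausible line of attack is to combine topological and Meyniel-type ideas: partition or decompose a genus-$g$ graph so that one reduces to a planar or bounded-treewidth remainder plus a controlled number of ``global'' structures, and argue that catching the robber costs only $O(g^{1/2+\epsilon})$ cops rather than one cop per unit of genus. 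The guarding-geodesic paradigm underlying Quillot, Schr\"oder, and this paper inherently spends $\Omega(1)$ cops per genus reduction and so cannot beat the linear bound; a fundamentally new idea---perhaps exploiting that a genus-$g$ surface supports only $O(\sqrt{g})$ ``independent'' non-contractible directions that the robber can exploit simultaneously---would be required.

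\textbf{Where the difficulty concentrates.} The hard part will be the upper bound, and specifically breaking the linear barrier intrinsic to all known cop strategies on surfaces. Any successful approach must explain why, despite genus $g$ allowing up to $\Theta(g)$ topologically distinct handles, a robber can only ``use'' about $\sqrt{g}$ of them against the cops at once---mirroring the way Meyniel's bound $c(G) = O(\sqrt{n})$ beats the trivial degree/domination bounds. Absent such an insight the statement remains conjectural, which is precisely how it is presented in the paper.
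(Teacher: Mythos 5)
This statement is Mohar's Conjecture, presented in the paper as an open problem with no proof attached, and your assessment matches the paper exactly: the lower half $c(g) > g^{1/2-\epsilon}$ for $g \geq g_0(\epsilon)$ is an immediate consequence of the theorem $c(g) \geq g^{\frac{1}{2}-o(1)}$ stated just above it, while the upper half is genuinely open and, as you correctly observe, cannot be reached by the guard-a-geodesic paradigm of Aigner--Fromme, Quillot, Schr\"oder, and this paper, which spends $\Omega(1)$ cops per unit of genus (a limitation the authors themselves flag via the near-tightness of Theorem~\ref{t:PC}). There is no proof in the paper to compare against, so there is no gap to report; your proposal is an accurate account of the state of the problem.
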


\section{Topological background}\label{s:top}
Throughout this section let $S$ be a compact connected orientable surface. By the classification theorem for compact surfaces, $S$ is homeomorphic to $T_n$ (the $n$-fold torus) with a finite number of holes (i.e.\ a finite number of non-touching open disks $D_1,\ldots,D_k \subset T_n$ removed). The genus of $S$ is defined as the genus of $T_n$ (which is $n$ by definition), and the boundary $\partial(S)$  of $S$ consists of the $k$ simple closed curves $\partial D_i$ for $i \in [k]$ if $k \neq 0$. 
%Josh : I changed this slightly, I think it makes sense if $k=0$ as well, but perhaps I should just make an explicit case distinction

Next, we fix some terminology for the operation of cutting and pasting surfaces along a particular class of arcs and closed curves. Both procedures are completely standard and well-known. Still, it seems desirable to isolate and state precisely which topological facts we will need for the remainder of the paper. 

Any non-trivial connected subset of $\partial S$ will be called a \emph{boundary arc} or a \emph{$\partial$-arc} of $S$. Clearly, any $\partial$-arc will either be homeomorphic to an arc or a simple closed curve.

Given two points $x,y \in S$, not necessarily distinct, an \emph{\arc{x}{y}} is a continuous function $f: [0,1] \rightarrow S$ such that 
\begin{itemize}
\item $f(0)=x$ and $f(1)=y$;
\item $f( (0,1) ) \cap \partial S = \emptyset$ 
\item $f$ is injective except for possibly $f(0)=f(1)$.
\end{itemize}
Given an \arc{x}{y} $f$, we will write $\partial f$ for $\{x,y\}$. When there is no room for confusion, it will often be convenient to talk about the image $A = f([0,1])$ of an \arc{x}{y} rather than the function $f$. In this case $\partial A$ is understood to be $\partial f$, which would otherwise be ambiguous in the case where $x=y$. See Figure~\ref{fig_typCutarc} for a typical such cutline.

\begin{figure}
  \begin{tikzpicture}[shading=ball, ball color=lightgray]
    % draw the sphere
    \draw[fill=lightgray] ellipse (5 and 2);
    
    \draw[fill=white] (-3,1) ellipse (.5 and .2);
     \draw[fill=white] (-2,-1) ellipse (.5 and .2);
      \draw[fill=white] (2,0) ellipse (.5 and .2);
%    \draw[fill=white] ellipse (5 and 2);
    
     \node (A) at (-2.5,1.4) {\rotatebox{10}{\textcolor{red}{\ding{33}}}};
 \node (B) at (-2,-.3) {\textcolor{red}{$A$}};
    \draw[dashed, red] (-2.8,1.2) .. controls (1,2.5) and (-2.2,-.5) .. (-2,-0.8);

     \begin{scope}[shift={(-.5,-1)}]
   \draw[dashed] (0,0)  coordinate(c1) {[yscale=.5] arc(180:0:.2)} coordinate(c2);
    \draw[dashed ](1,0)  coordinate(d1) {[yscale=.5] arc(180:0:.2)} coordinate(d2);

    \draw[fill opacity=.8,shade]
      (c1) to[out=70,in=110,looseness=2.2] (d2)
      {[yscale=.5] arc(360:180:.2)}
      to[out=110,in=70,looseness=2.5] (c2)
      {[yscale=.5] arc(360:180:.2)} -- cycle;
    \end{scope}

         \begin{scope}[shift={(1,1)}]
   \draw[dashed] (0,0)  coordinate(c1) {[yscale=.5] arc(180:0:.2)} coordinate(c2);
    \draw[dashed ](1,0)  coordinate(d1) {[yscale=.5] arc(180:0:.2)} coordinate(d2);

    \draw[fill opacity=.8,shade]
      (c1) to[out=70,in=110,looseness=2] (d2)
      {[yscale=.5] arc(360:180:.2)}
      to[out=110,in=70,looseness=2.3] (c2)
      {[yscale=.5] arc(360:180:.2)} -- cycle;
    \end{scope}

       \begin{scope}[shift={(-2,.5)}]
       
   \draw[dashed] (0,0)  coordinate(c1) {[yscale=.5] arc(180:0:.2)} coordinate(c2);
    \draw[dashed ](1,0)  coordinate(d1) {[yscale=.5] arc(180:0:.2)} coordinate(d2);

    \draw[fill opacity=.8,shade]
      (c1) to[out=70,in=110,looseness=2] (d2)
      {[yscale=.5] arc(360:180:.2)}
      to[out=110,in=70,looseness=2.3] (c2)
      {[yscale=.5] arc(360:180:.2)} -- cycle;

           \end{scope}
  \end{tikzpicture}
  \caption{An orientable surface $S$ of genus $3$ with $4$ boundary components and a dashed \arc{x}{y} $A$.}
  \label{fig_typCutarc}
\end{figure}
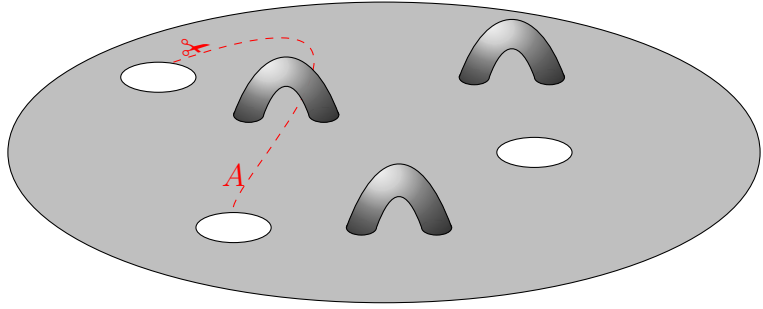

Now we introduce the space $\cut (S,f)$ obtained by cutting $S$ along $f$. Intuitively, cutting along $f$ will result in two new arcs in the boundary corresponding to the two sides of $f$. One way to formalise this intuition is to recall (see e.g.\ \cite[Theorem~3.1.1]{mohar2001graphs}) that any surface is homeomorphic to a triangulated surface, that is, a surface obtained by starting with a finite collection of triangles and identifying each edge of every triangle with at most one edge of another triangle (in particular the lengths of edges that get identified coincide). The boundary of a triangulated surface is the union of all edges that were not identified with another edge.

Without loss of generality, assume that $S$ is given as a triangulated surface, and that $A=f([0,1])$ is a polygonal arc or closed curve. By passing to a suitable refinement of the triangulation we may assume that this arc consists of edges of triangles which were identified.

\begin{defn}
\label{defn:pasting}
The space $\cut (S,f)$ is obtained from $S$ by un-identifying all edges that lie in $A$. We say $\cut (S,f)$ is obtained from $S$ by cutting along $A$.

The \emph{pasting map} is the map $\phi\colon \cut(S,f) \to S$ whose restriction to each individual triangle is the identity. This is a continuous map which is the identity on $S \setminus A$. 

The preimage of $A$ under $\phi$ consists of two arcs or closed curves which we will denote by $A_1$ and $A_2$, cf.\ Figure~\ref{fig_easycutting}. For later use, for $X \subseteq S$, let us define  $\cut(X,f) = \phi^{-1}(X)$.
\end{defn}

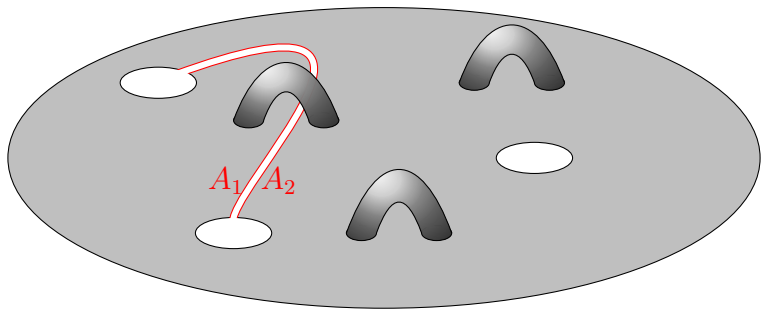
\begin{figure}
  \begin{tikzpicture}[shading=ball, ball color=lightgray]
    % draw the sphere
    \draw[fill=lightgray] ellipse (5 and 2);
    
    \draw[thick, fill=white] (-3,1) ellipse (.5 and .2);
     \draw[thick, fill=white] (-2,-1) ellipse (.5 and .2);
      \draw[thick, fill=white] (2,0) ellipse (.5 and .2);
%    \draw[fill=white] ellipse (5 and 2);
%    \node (A) at (-2.5,1.4) {\rotatebox{10}{\textcolor{red}{\ding{33}}}};
    
%    \draw[dashed, red] (-2.8,1.2) .. controls (1,2.5) and (-2.2,-.5) .. (-2,-0.8); 
     \draw[double distance=.8mm, red] (-2.8,1.1) .. controls (1,2.5) and (-2.2,-.5) .. (-2,-0.9);   
     
     \node (A1) at (-2.1,-.3) {\textcolor{red}{$A_1$}};
     
     \node (A2) at (-1.4,-.3) {\textcolor{red}{$A_2$}};
     
      \draw[draw=none, fill=white] (-3,1) ellipse (.5 and .2);
     \draw[draw=none, fill=white] (-2,-1) ellipse (.5 and .2);
      \draw[draw=none, fill=white] (2,0) ellipse (.5 and .2);

     \begin{scope}[shift={(-.5,-1)}]
   \draw[dashed] (0,0)  coordinate(c1) {[yscale=.5] arc(180:0:.2)} coordinate(c2);
    \draw[dashed ](1,0)  coordinate(d1) {[yscale=.5] arc(180:0:.2)} coordinate(d2);
    
   %    \useasboundingbox (-3,-3) rectangle (5,5); % To keep the image size reasonable
    \draw[fill opacity=.8,shade]
      (c1) to[out=70,in=110,looseness=2.2] (d2)
      {[yscale=.5] arc(360:180:.2)}
      to[out=110,in=70,looseness=2.5] (c2)
      {[yscale=.5] arc(360:180:.2)} -- cycle;
    \end{scope}

         \begin{scope}[shift={(1,1)}]
   \draw[dashed] (0,0)  coordinate(c1) {[yscale=.5] arc(180:0:.2)} coordinate(c2);
    \draw[dashed ](1,0)  coordinate(d1) {[yscale=.5] arc(180:0:.2)} coordinate(d2);
    
    %   \useasboundingbox (-3,-3) rectangle (5,5); % To keep the image size reasonable
    \draw[fill opacity=.8,shade]
      (c1) to[out=70,in=110,looseness=2] (d2)
      {[yscale=.5] arc(360:180:.2)}
      to[out=110,in=70,looseness=2.3] (c2)
      {[yscale=.5] arc(360:180:.2)} -- cycle;
    \end{scope}

       \begin{scope}[shift={(-2,.5)}]
       
   \draw[dashed] (0,0)  coordinate(c1) {[yscale=.5] arc(180:0:.2)} coordinate(c2);
    \draw[dashed ](1,0)  coordinate(d1) {[yscale=.5] arc(180:0:.2)} coordinate(d2);
    
     %  \useasboundingbox (-3,-3) rectangle (5,5); % To keep the image size reasonable
    \draw[fill opacity=.8,shade]
      (c1) to[out=70,in=110,looseness=2] (d2)
      {[yscale=.5] arc(360:180:.2)}
      to[out=110,in=70,looseness=2.3] (c2)
      {[yscale=.5] arc(360:180:.2)} -- cycle;

           \end{scope}

  \end{tikzpicture}
  \caption{The surface $\cut(S,f)$ with two sides $A_1$ and $A_2$, where $S$ and $A$ are as above has one fewer boundary component.}
  \label{fig_easycutting}
\end{figure}

\begin{lemma}
Let $S$ be a compact connected orientable surface, $x,y \in S$, and let $f$ be an \arc{x}{y} in $S$. Then $\cut(S,f)$ has at most two connected components, each of which is a compact orientable surface with boundary.
\end{lemma}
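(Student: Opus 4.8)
The plan is to argue entirely within the triangulated model set up in Definition~\ref{defn:pasting}, viewing $\cut(S,f)$ as the quotient of the disjoint union of the finitely many closed triangles of $S$ under all identifications of $S$ except those along the edges of $A$. Compactness is then immediate, as $\cut(S,f)$ is a quotient of a finite union of (compact) triangles. Orientability is equally quick: fixing a coherent orientation of the triangles of $S$ (which exists because $S$ is orientable), the pasting map $\phi$ exhibits $\cut(S,f)$ as carrying the same triangles with the same orientations and only a subset of the edge-identifications, so the coherence condition is inherited on every surviving identified edge. Thus the only real content is to check that $\cut(S,f)$ is a manifold with boundary and that it has at most two components.

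For the manifold-with-boundary property I would use the standard combinatorial criterion: a finite complex built from triangles by edge-identifications is a surface with boundary precisely when every edge lies in at most two triangles and the link of every vertex is a single path or cycle. The first condition is clear, since un-identifying edges only decreases the number of triangles meeting along an edge. For the link condition, the links of vertices off $A$ are untouched, so the work is concentrated at the vertices lying on $A$, and here one uses crucially that $f((0,1)) \cap \partial S = \emptyset$: every edge of $A$ has its relative interior in $\Int{S}$ and hence is an interior edge of $S$, so un-identifying it splits it into two boundary edges, and every \emph{interior} vertex of $A$ lies in $\Int{S}$ and so has a cyclic link. I would then analyse the effect of un-identifying the edges of $A$ incident to a vertex $v \in A$, distinguishing whether $v$ is an endpoint or an interior vertex of $A$ and whether $v \in \Int{S}$ or $v \in \partial S$. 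In each case $v$ splits into one or more vertices, each of which is the apex of a maximal fan of triangles and hence has a path as its link (so becomes a boundary vertex), while the incident edges of $A$ turn into boundary edges; this simultaneously shows $A_1 \cup A_2 \subseteq \partial(\cut(S,f))$. This endpoint/vertex-splitting analysis is the step I expect to be the main obstacle, since one must track how, and into how many pieces, each vertex splits; the hypothesis on $f((0,1))$ is exactly what rules out the problematic configurations (an edge of $A$ being a boundary edge, or an interior vertex of $A$ sitting on $\partial S$).

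Finally, for the component count I would exploit that $S$ is recovered from $\cut(S,f)$ by re-gluing $A_1$ to $A_2$; concretely, $\phi$ realises $S$ as the quotient of $\cut(S,f)$ identifying $A_1$ with $A_2$. Since $A_1$ and $A_2$ are each connected (each being homeomorphic to the connected set $A$), each lies in a single component of $\cut(S,f)$, so the gluing can merge at most the two components containing $A_1$ and $A_2$, and hence decreases the number of components by at most one. As $S$ is connected, it follows that $\cut(S,f)$ has at most two components. The same connectivity forces every component to meet $A_1 \cup A_2$, since a component disjoint from both would survive untouched as a separate component of $S$; together with the previous paragraph this shows every component has nonempty boundary. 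Combining the three parts yields that $\cut(S,f)$ has at most two components, each a compact orientable surface with boundary, as claimed.
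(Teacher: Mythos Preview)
Your proposal is correct and follows essentially the same approach as the paper: compactness from the finite triangle complex, orientability inherited from $S$, and the component bound via the connectedness of $A_1$ and $A_2$ together with the observation that any component avoiding both would already be a component of $S$. The only notable difference is that you spell out the link-of-vertex analysis to verify the manifold-with-boundary structure, whereas the paper simply asserts this as a consequence of the triangulated-surface setup, and your orientability argument via coherent orientations is phrased slightly differently from the paper's one-line remark that $\cut(S,f)$ is homeomorphic to a subspace of $S$.
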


\begin{proof}
The space $\cut(S,f)$ is compact since it is the union of finitely many triangles and orientable since it is homeomorphic to a subspace of $S$.
%(Josh) : Is there a better way to see that it's orientable?
%(Josh) : If we want to be more precise, would have to define orientations of triangulations, and then the point is the the orientation of the triangulation $\cut(S,f)$ given locally by the orientation of $S$ witnesses that $\cut(S,f)$ is orientable.
Any connected component of $\cut(S,f)$ is hence a compact orientable triangulated surface with boundary.  It only remains to show that there are at most $2$ connected components.

Let $A_1$ and $A_2$ be as in Definition \ref{defn:pasting}. Since $A_1$ and $A_2$ are connected, each of them is contained in a connected component. Any connected component of $\cut (S,f)$ which contains neither of the two arcs would also be a non-trivial connected component of $S$, contradicting the fact that $S$ is connected.
\end{proof}

Note that, in particular, both $A_1$ and $A_2$ are $\partial$-arcs of the components of $\cut(S,f)$. The space $\cut(S,f)$ depends on whether the points $x$ and $y$ are distinct, whether they lie in $\partial S$ or not, and if they both do lie in $\partial S$, whether they lie in the same component of the boundary. We will be interested in how this operation changes the boundary and genus of the surface, and so in the next two lemmas will give a description of what happens in each of the various cases. Recall that, if we have a triangulation of a surface $S$ with $V$ vertices, $E$ edges and $F$ faces (i.e.\ triangles plus boundary cycles), then by Euler's formula
\begin{equation}\label{e:Euler}
g(S) = 1 - \frac{(V-E+F)}{2}.
\end{equation}

\begin{lemma}\label{l:x=yorC=C'}
Let $S$ be a compact orientable surface with (perhaps empty) boundary $\partial S = \{ \partial D_1, \partial D_2, \ldots, \partial D_m\}$. Let $x,y \in S$ be such that either $x=y$, or there exists an $i$ such that $x,y \in \partial D_i$. Let $f$ be an \arc{x}{y} in $S$ and $A=f([0,1])$, $A_1$ and $A_2$ be as above. If $x=y \not\in \partial S$ let $D = \{x\}$, otherwise let $D=D_i$.

Then the boundary of $\cut(S,f)$ is $\{ \partial D_j \colon D_j \neq D\} \cup \hat{D}_1 \cup \hat{D}_2$ where $\hat{D}_1$ consists of the arc $A_1$ together with one of the components of $\partial D \setminus \partial f$ and $\hat{D}_2$ consists of $A_2$ together with the other component, where if $D = \{x\}$ these components are both considered to be $\{x\}$.
%(Josh) : Here and in the next lemma do I actually need this final part, since if D= \{x\} then $A_1$ and $A_2$ already contain $x$...
%Perhaps I should consider them to be empty in this case, it doesn't really matter, I guess \partial D isn't defined and so needs `defining'

Furthermore, if $\cut(S,f)$ is connected, then its genus is one less than the genus of $S$, cf.\ Figure~\ref{fig_genuscutting}. If $\cut(S,f)$ is disconnected, then the sum of the genus of the components of $\cut(S,f)$ is equal to the genus of $S$, cf.\ Figure~\ref{fig_nogenuscutting}.
\end{lemma}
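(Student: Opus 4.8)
The plan is to treat the two assertions separately: the description of the boundary follows from a purely local analysis of the cutting operation, while the genus statements follow from Euler's formula \eqref{e:Euler} together with a count of how the numbers of vertices, edges and faces change under cutting. As in Definition~\ref{defn:pasting}, I would fix a triangulation of $S$ in which $A = f([0,1])$ is a subcomplex, every edge of $A$ being an edge that was identified between two triangles.

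For the boundary, I would first observe that the pasting map $\phi$ is a homeomorphism away from $A$, so outside $A$ the boundary is unchanged; this accounts for the components $\partial D_j$ with $D_j \neq D$. Along the interior of $A$ the un-identification replaces the single arc $A$ by its two preimages $A_1$ and $A_2$, which by the remark following the preceding lemma are $\partial$-arcs of $\cut(S,f)$. It then remains to understand how $A_1$ and $A_2$ link up with the remnants of $\partial D$ at the endpoints. Removing $\partial f$ from the simple closed curve $\partial D$ (or from the point $\{x\}$, in the interior case) leaves the components described in the statement, and tracing the boundary of $\cut(S,f)$ through a copy of an endpoint shows that each such component is joined to exactly one of $A_1, A_2$, yielding the two new boundary cycles $\hat{D}_1$ and $\hat{D}_2$.

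For the genus, I would compute the change in the Euler characteristic $\chi = V - E + F$, where, per the convention preceding \eqref{e:Euler}, $F$ counts triangles together with boundary cycles. Cutting leaves every triangle intact, so the triangle count is unchanged; each edge of $A$ is un-identified into two, and each interior vertex of $A$ is likewise doubled. A short case check at the endpoints (and at the basepoint when $x=y$) shows that the vertex and boundary-cycle counts conspire so that in every case $\Delta V - \Delta E + \Delta F = 2$, i.e.\ $\chi(\cut(S,f)) = \chi(S) + 2$. Granting this, \eqref{e:Euler} finishes the proof: if $\cut(S,f)$ is connected then its genus is $1 - \tfrac{\chi(S)+2}{2} = g(S) - 1$, while if it splits into $S_1, S_2$ then $\chi(S_1) + \chi(S_2) = \chi(S) + 2$ gives $g(S_1) + g(S_2) = 2 - \tfrac{\chi(S)+2}{2} = g(S)$.

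I expect the main obstacle to be the careful bookkeeping at the endpoints of $A$, which is exactly where the cases in the hypothesis diverge. When $x \neq y$ lie on $\partial D_i$, each endpoint is split into two by the cut, the cycle $\partial D$ becomes $\hat{D}_1, \hat{D}_2$, and one checks $\Delta F = +1$. When $x = y \notin \partial S$ the curve $A$ is closed, no endpoint vertex is special, and the two new boundary circles $A_1, A_2$ give $\Delta F = +2$. The genuinely fiddly case is $x = y \in \partial S$: here the two ends of the loop $A$ emanate from the single boundary vertex $x$, splitting its star into three sectors so that $x$ is replaced by three copies, while one of the two components of $\partial D \setminus \partial f$ degenerates; verifying that this again produces exactly the cycles $\hat{D}_1, \hat{D}_2$ and the value $\Delta\chi = 2$ is the crux. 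Once the endpoint analysis is pinned down in each case, both conclusions follow uniformly from the computation above.
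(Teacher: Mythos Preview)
Your proposal is correct and follows essentially the same route as the paper: the boundary description is obtained from the construction, and the genus statements are read off from Euler's formula \eqref{e:Euler} after tallying the changes in vertices, edges and faces. The paper collapses your three endpoint cases into two (according to whether $x \in \partial S$ or $x \in \Int{S}$), but the counts agree, and your uniform observation that $\Delta\chi = 2$ is exactly what drives both the connected and disconnected genus conclusions.
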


\begin{figure}
 \begin{tikzpicture}[shading=ball, ball color=lightgray]
    % draw the sphere
    \draw[fill=lightgray] ellipse (5 and 2);
    
    \draw[thick, fill=white] (-3,1) ellipse (.5 and .2);
     \draw[thick, fill=white] (-2,-1) ellipse (.5 and .2);
      \draw[thick, fill=white] (2,0) ellipse (.5 and .2);
%    \draw[fill=white] ellipse (5 and 2);
%    \node (A) at (-2.5,1.4) {\rotatebox{10}{\textcolor{red}{\ding{33}}}};
    
%    \draw[dashed, red] (-2.8,1.2) .. controls (1,2.5) and (-2.2,-.5) .. (-2,-0.8); 
     \draw[double distance=1mm, red] (-2.8,1.1) .. controls (0.5,3) and (-2.2,-2.5) .. (-3,1);   
     
     \node (A1) at (-2.3,.2) {\textcolor{red}{$A_1$}};
     
     \node (A2) at (-1.6,-.2) {\textcolor{red}{$A_2$}};
     
      \draw[draw=none, fill=white] (-3,1) ellipse (.5 and .2);
     \draw[draw=none, fill=white] (-2,-1) ellipse (.5 and .2);
      \draw[draw=none, fill=white] (2,0) ellipse (.5 and .2);

     \begin{scope}[shift={(-.5,-1)}]
   \draw[dashed] (0,0)  coordinate(c1) {[yscale=.5] arc(180:0:.2)} coordinate(c2);
    \draw[dashed ](1,0)  coordinate(d1) {[yscale=.5] arc(180:0:.2)} coordinate(d2);
    
   %    \useasboundingbox (-3,-3) rectangle (5,5); % To keep the image size reasonable
    \draw[fill opacity=.8,shade]
      (c1) to[out=70,in=110,looseness=2.2] (d2)
      {[yscale=.5] arc(360:180:.2)}
      to[out=110,in=70,looseness=2.5] (c2)
      {[yscale=.5] arc(360:180:.2)} -- cycle;
    \end{scope}

         \begin{scope}[shift={(1,1)}]
   \draw[dashed] (0,0)  coordinate(c1) {[yscale=.5] arc(180:0:.2)} coordinate(c2);
    \draw[dashed ](1,0)  coordinate(d1) {[yscale=.5] arc(180:0:.2)} coordinate(d2);
    
    %   \useasboundingbox (-3,-3) rectangle (5,5); % To keep the image size reasonable
    \draw[fill opacity=.8,shade]
      (c1) to[out=70,in=110,looseness=2] (d2)
      {[yscale=.5] arc(360:180:.2)}
      to[out=110,in=70,looseness=2.3] (c2)
      {[yscale=.5] arc(360:180:.2)} -- cycle;
    \end{scope}

       \begin{scope}[shift={(-2,.5)}]
       
   \draw[dashed] (0,0)  coordinate(c1) {[yscale=.5] arc(180:0:.2)} coordinate(c2);
    \draw[dashed ](1,0)  coordinate(d1) {[yscale=.5] arc(180:0:.2)} coordinate(d2);
    
     %  \useasboundingbox (-3,-3) rectangle (5,5); % To keep the image size reasonable
    \draw[fill opacity=.8,shade]
      (c1) to[out=70,in=110,looseness=2] (d2)
      {[yscale=.5] arc(360:180:.2)}
      to[out=110,in=70,looseness=2.3] (c2)
      {[yscale=.5] arc(360:180:.2)} -- cycle;

           \end{scope}

        \node () at (0,-2.5) {$\cong$};

        \begin{scope}[shift={(0,-5)}]
           % draw the sphere
    \draw[fill=lightgray] ellipse (5 and 2);
    
    \draw[thick, fill=white] (-3,1) ellipse (.5 and .2);
     \draw[thick, fill=white] (-2,-1) ellipse (.5 and .2);
      \draw[thick, fill=white] (2,0) ellipse (.5 and .2);
%    \draw[fill=white] ellipse (5 and 2);
%    \node (A) at (-2.5,1.4) {\rotatebox{10}{\textcolor{red}{\ding{33}}}};
    
%    \draw[dashed, red] (-2.8,1.2) .. controls (1,2.5) and (-2.2,-.5) .. (-2,-0.8); 
     \draw[double distance=1mm, red] (-2.8,1.1) .. controls (0.5,3) and (-2.2,-2.5) .. (-3,1);

      \draw[white, fill=white] (-2.8,1.1) .. controls (0.5,3) and (-2.2,-2.5) .. (-3,1) -- cycle;   
     
  %   \node (A1) at (-2.3,.2) {\textcolor{red}{$A_1$}};
     
     \node (A2) at (-1.6,-.2) {\textcolor{red}{$A_2$}};
     
      \draw[draw=none, fill=white] (-3,1) ellipse (.5 and .2);
     \draw[draw=none, fill=white] (-2,-1) ellipse (.5 and .2);
      \draw[draw=none, fill=white] (2,0) ellipse (.5 and .2);

     \begin{scope}[shift={(-.5,-1)}]
   \draw[dashed] (0,0)  coordinate(c1) {[yscale=.5] arc(180:0:.2)} coordinate(c2);
    \draw[dashed ](1,0)  coordinate(d1) {[yscale=.5] arc(180:0:.2)} coordinate(d2);
    
   %    \useasboundingbox (-3,-3) rectangle (5,5); % To keep the image size reasonable
    \draw[fill opacity=.8,shade]
      (c1) to[out=70,in=110,looseness=2.2] (d2)
      {[yscale=.5] arc(360:180:.2)}
      to[out=110,in=70,looseness=2.5] (c2)
      {[yscale=.5] arc(360:180:.2)} -- cycle;
    \end{scope}

         \begin{scope}[shift={(1,1)}]
   \draw[dashed] (0,0)  coordinate(c1) {[yscale=.5] arc(180:0:.2)} coordinate(c2);
    \draw[dashed ](1,0)  coordinate(d1) {[yscale=.5] arc(180:0:.2)} coordinate(d2);
    
    %   \useasboundingbox (-3,-3) rectangle (5,5); % To keep the image size reasonable
    \draw[fill opacity=.8,shade]
      (c1) to[out=70,in=110,looseness=2] (d2)
      {[yscale=.5] arc(360:180:.2)}
      to[out=110,in=70,looseness=2.3] (c2)
      {[yscale=.5] arc(360:180:.2)} -- cycle;
    \end{scope}

      \begin{scope}[shift={(-2,.5)}]
%       
%   \draw[dashed] (0,0)  coordinate(c1) {[yscale=.5] arc(180:0:.2)} coordinate(c2);
        \node (A1) at (1.5,.4) {\textcolor{red}{$A_1$}};
 \draw[red, fill=white] (1.8,0)  coordinate(c1) {[yscale=.5] arc(0:360:.4)} coordinate(c2);
  \draw (1.8,0)  coordinate(c1) {[yscale=.5] arc(360:180:.4)} coordinate(c2);
%    \draw[dashed ](1,0)  coordinate(d1) {[yscale=.5] arc(180:0:.2)} coordinate(d2);
%    
%     %  \useasboundingbox (-3,-3) rectangle (5,5); % To keep the image size reasonable
%    \draw[fill opacity=.8,shade]
%      (c1) to[out=70,in=110,looseness=2] (d2)
%      {[yscale=.5] arc(360:180:.2)}
%      to[out=110,in=70,looseness=2.3] (c2)
%      {[yscale=.5] arc(360:180:.2)} -- cycle;
%
           \end{scope}

        \end{scope}

  \end{tikzpicture}

  \caption{When the \arc{x}{y} $f$ attaches to the same boundary component and $\cut(S,f)$ is connected, the cut is genus reducing.}
  \label{fig_genuscutting}
\end{figure}
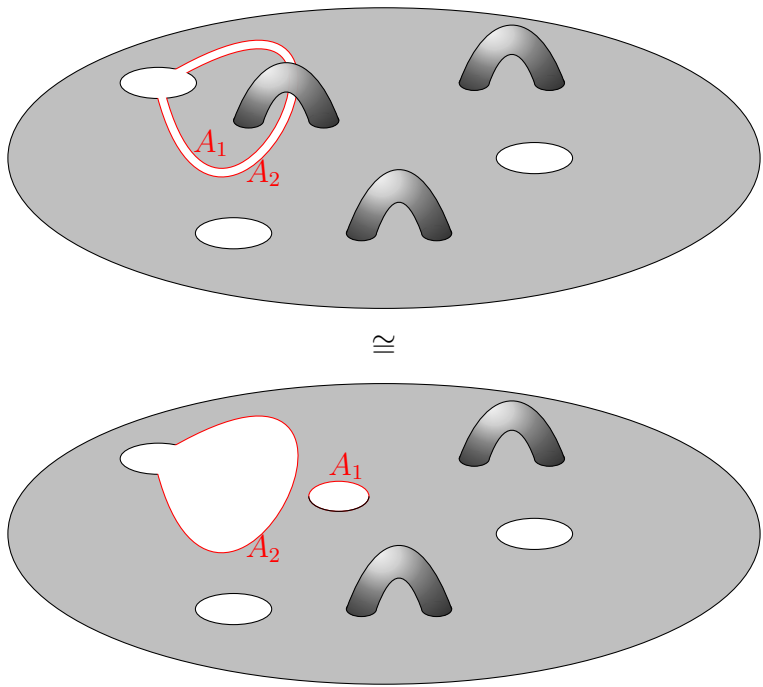

\begin{figure}
   \begin{tikzpicture}[shading=ball, ball color=lightgray]
    % draw the sphere
    \draw[fill=lightgray] ellipse (5 and 2);
    
    \draw[thick, fill=white] (-3,1) ellipse (.5 and .2);
     \draw[thick, fill=white] (-2,-1) ellipse (.5 and .2);
      \draw[thick, fill=white] (2,0) ellipse (.5 and .2);
%    \draw[fill=white] ellipse (5 and 2);
%    \node (A) at (-2.5,1.4) {\rotatebox{10}{\textcolor{red}{\ding{33}}}};
    
%    \draw[dashed, red] (-2.8,1.2) .. controls (1,2.5) and (-2.2,-.5) .. (-2,-0.8); 
     \draw[double distance=1mm, red] (-2.8,1.1) .. controls (2,3) and (-0.7,-2.5) .. (-3,1);   
     
     \node (A1) at (-1.4,.1) {\textcolor{red}{$A_1$}};
     
     \node (A2) at (-2,-.3) {\textcolor{red}{$A_2$}};
     
      \draw[draw=none, fill=white] (-3,1) ellipse (.5 and .2);
     \draw[draw=none, fill=white] (-2,-1) ellipse (.5 and .2);
      \draw[draw=none, fill=white] (2,0) ellipse (.5 and .2);

     \begin{scope}[shift={(-.5,-1)}]
   \draw[dashed] (0,0)  coordinate(c1) {[yscale=.5] arc(180:0:.2)} coordinate(c2);
    \draw[dashed ](1,0)  coordinate(d1) {[yscale=.5] arc(180:0:.2)} coordinate(d2);
    
   %    \useasboundingbox (-3,-3) rectangle (5,5); % To keep the image size reasonable
    \draw[fill opacity=.8,shade]
      (c1) to[out=70,in=110,looseness=2.2] (d2)
      {[yscale=.5] arc(360:180:.2)}
      to[out=110,in=70,looseness=2.5] (c2)
      {[yscale=.5] arc(360:180:.2)} -- cycle;
    \end{scope}

         \begin{scope}[shift={(1,1)}]
   \draw[dashed] (0,0)  coordinate(c1) {[yscale=.5] arc(180:0:.2)} coordinate(c2);
    \draw[dashed ](1,0)  coordinate(d1) {[yscale=.5] arc(180:0:.2)} coordinate(d2);
    
    %   \useasboundingbox (-3,-3) rectangle (5,5); % To keep the image size reasonable
    \draw[fill opacity=.8,shade]
      (c1) to[out=70,in=110,looseness=2] (d2)
      {[yscale=.5] arc(360:180:.2)}
      to[out=110,in=70,looseness=2.3] (c2)
      {[yscale=.5] arc(360:180:.2)} -- cycle;
    \end{scope}

       \begin{scope}[shift={(-2,.5)}]
       
   \draw[dashed] (0,0)  coordinate(c1) {[yscale=.5] arc(180:0:.2)} coordinate(c2);
    \draw[dashed ](1,0)  coordinate(d1) {[yscale=.5] arc(180:0:.2)} coordinate(d2);
    
     %  \useasboundingbox (-3,-3) rectangle (5,5); % To keep the image size reasonable
    \draw[fill opacity=.8,shade]
      (c1) to[out=70,in=110,looseness=2] (d2)
      {[yscale=.5] arc(360:180:.2)}
      to[out=110,in=70,looseness=2.3] (c2)
      {[yscale=.5] arc(360:180:.2)} -- cycle;

           \end{scope}

  \end{tikzpicture}

  \caption{When the \arc{x}{y} $f$ attaches to the same boundary component and $\cut(S,f)$ is disconnected.}
  \label{fig_nogenuscutting}
\end{figure}
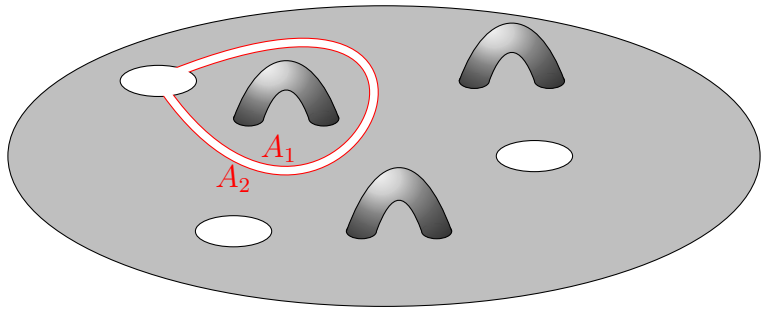

\begin{proof}
The statement about the structure of the boundary is true by construction, using the observation that the boundary of a triangulated surface consists of those edges that are not identified with any other edge. 

Suppose that $A$ consists of $k$ edges. If $\cut(S,f)$ is connected and $x,y \in \partial S$, then the bound for the genus follows from (\ref{e:Euler}) by noting that compared to $S$ the number of faces has increased by one, the number of edges has increased by $k$, and the number of vertices has increased by $k+1$. If $x \in \Int{S}$ (and so $x=y$) then the number of faces has increased by two, the number of edges has increased by $k$ and the number of vertices has increased by $k$, and the results again follows from (\ref{e:Euler}).

Finally if $\cut(S,f)$ is disconnected, with components $S'_1$ and $S'_2$ then a similar argument shows that $g(S) = g(S'_1) + g(S'_2)$. \qedhere
%Explicitly: If $G$ is the triangulation of $S$ and $G'_1$ and $G'_2$ are the induced triangulations of $S'_1$ and $S'_2$ then we have that $f(G) = f(G'_1) + f(G'_2) -1$, $e(G) = e(G'_1) + e(G'_2) - k$ and $v(G) = v(G'_1) + v(G'_2) - (k+1)$ if $x \not\in \Int{S}$ and $f(G) = f(G'_1) + f(G'_2) - 2$, $e(G) = e(G'_1) + e(G'_2) - k$ and $v(G) = v(G'_1) + v(G'_2) - k$, and again the result follows from (\ref{e:Euler}).
\end{proof}

\begin{lemma}\label{l:xneqyandCneqC'}
Let $S$ be a compact orientable surface with (perhaps empty) boundary $\partial S = \{ \partial D_1, \partial D_2, \ldots, \partial D_m\}$. Let $x, y \in S$ be such that there is no $k$ with $\{x,y\} \subseteq D_k$. Let $f$ be an \arc{x}{y} in $S$ and $A=f([0,1]), A_1$ and $A_2$ be as above. If $x \not\in \partial S$ let $D = \{x\}$, otherwise let $D=D_i$ such that $x \in D_i$. Similarly, if $y \not\in \partial S$ let $D' = \{y\}$, otherwise let $D'=D_j$ such that $y \in D_j$.

Then the boundary of $\cut(S,f)$ is $\{ \partial D_k \colon  D_k \neq D,D'\} \cup \hat{D}$ where $\hat{D}$ consists of the arcs $A_1$ and $A_2$ together with $\partial D \setminus \partial f$ and $\partial D' \setminus \partial f$ where if $D= \{x\}$ or $D'=\{y\}$ we let $\partial D$ and $\partial D'$ be $\{x\}$ and $\{y\}$ respectively.
%(Josh) : same issue as the lemma above.

Furthermore, $\cut (S,f)$ is connected and the genus of $\cut (S,f)$ equals the genus of $S$, cf.\ Figure~\ref{fig_easycutting}.
\end{lemma}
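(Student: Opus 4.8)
The plan is to follow the template of the proof of Lemma~\ref{l:x=yorC=C'}, handling in turn the boundary, the connectedness, and the genus of $\cut(S,f)$. First I would deal with the boundary, which as before is true essentially by construction, using that the boundary of a triangulated surface consists exactly of those edges not identified with another edge. Cutting un-identifies the $k$ edges making up $A$, producing the two arcs $A_1$ and $A_2$, while every boundary component $\partial D_k$ with $D_k \neq D, D'$ is disjoint from $A$ and hence survives unchanged. It then remains to describe how $A_1$, $A_2$ and the affected pieces fit together. Since $x$ and $y$ lie on distinct boundary components (or are interior points), opening the cut at $x$ turns $\partial D$ into the arc $\partial D \setminus \partial f$ whose two ends are the two copies of $x$, and similarly at $y$. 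Tracing the boundary of $\cut(S,f)$ one then runs along $\partial D \setminus \partial f$, crosses along $A_1$ to $\partial D' \setminus \partial f$, runs along that, and returns along $A_2$; this is a single closed curve, which is exactly the claimed component $\hat{D}$. (When $x$, respectively $y$, is interior, $\partial D \setminus \partial f$, respectively $\partial D' \setminus \partial f$, degenerates to the point $x$, respectively $y$, in line with the convention in the statement.)

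Next I would establish connectedness. By the first lemma of this section, $\cut(S,f)$ has at most two components, and since a component meeting neither $A_1$ nor $A_2$ would be a whole component of the connected surface $S$, every component must contain $A_1$ or $A_2$. But $A_1$ and $A_2$ both lie on the single closed curve $\hat{D}$ constructed above, so they lie in a common component; hence $\cut(S,f)$ is connected.

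Finally, for the genus I would apply Euler's formula~(\ref{e:Euler}): since the genus is determined by $V - E + F$, it suffices to check that this quantity is unchanged by the cut. Cutting doubles the $k$ edges of $A$, so $\Delta E = k$. The $k-1$ interior vertices of $A$ lie in $\Int{S}$ and the cut passes straight through each of them, splitting each into two and contributing $+(k-1)$ to $V$. The behaviour at an endpoint depends on its position: a boundary endpoint is split in two by the cut, contributing $+1$ to $V$, whereas an interior endpoint, where the cut merely terminates, is not split; correspondingly, the cycles through $x$ and $y$ change as follows. If $x, y \in \partial S$ then $\partial D$ and $\partial D'$ merge into the single cycle $\hat{D}$, giving $\Delta V = k+1$ and $\Delta F = -1$; if exactly one of $x, y$ is interior, a single cycle is replaced by $\hat{D}$, giving $\Delta V = k$ and $\Delta F = 0$; and if both are interior, the new cycle $\hat{D}$ is created, giving $\Delta V = k-1$ and $\Delta F = +1$. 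In each case $\Delta(V - E + F) = 0$, so by~(\ref{e:Euler}) the genus of $\cut(S,f)$ equals that of $S$.

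I expect the crux to be the vertex bookkeeping at the two endpoints: correctly deciding when an endpoint is split by the cut and matching this against the change in the number of boundary cycles. This is precisely where the present lemma diverges from Lemma~\ref{l:x=yorC=C'}: there the arc returns to a single boundary component (or interior point) and splits one cycle into two, reducing the genus, whereas here it joins two distinct components (or points) and merges or creates cycles in just such a way as to leave $V - E + F$, and hence the genus, unchanged. The degenerate conventions for interior endpoints need a little care, but are handled uniformly by regarding the corresponding boundary ``component'' as a single point.
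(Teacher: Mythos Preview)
Your proposal is correct and follows exactly the paper's approach: the boundary description is by construction, and the genus claim follows from Euler's formula~(\ref{e:Euler}). The paper's proof is a single sentence referring back to Lemma~\ref{l:x=yorC=C'}, so you have simply unpacked the case analysis it leaves implicit; in particular, your explicit treatment of connectedness via the observation that $A_1$ and $A_2$ both lie on the single boundary curve $\hat D$ is a detail the paper omits entirely.
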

\begin{proof}
As in the proof of the previous lemma, the statement about structure of the boundary is true by construction, the statement about the genus follows from (\ref{e:Euler}).
\end{proof}
 
We shall also need a slight technical extension of Lemma \ref{l:x=yorC=C'} which says that in the case where there exists an $i$ such that $x,y \in \partial D_i$ then for any component $Z$ of $\partial D_i \setminus \{x,y\}$ we can choose an \arc{x}{y} which lies `along $Z$', so that the component of $\cut(S,f)$ containing $Z$ is planar.

\begin{lemma}\label{l:SameCyclesSafe}
Let $S$ be a compact orientable surface with boundary $\partial S = \{ \partial D_1, \partial D_2, \ldots, \partial D_m\}$. Let $x,y \in S$ be such that there exists an $i$ such that $x,y \in \partial D_i$ and let $Z_1$ and $Z_2$ be the components of $ \partial D_i \setminus \partial f$.

Then there exists an \arc{x}{y} in $S$, with $A=f([0,1]), A_1$ and $A_2$ as above such that $\cut(S,f)$ has two components
\begin{itemize}
    \item One of genus $0$ whose boundary consists of a cycle formed by $A_1$ and $Z_1$;
    \item One of genus $g(S)$ whose boundary consist of the $\partial D_j$ for $j \neq i$ together with a cycle formed by $A_2$ and $Z_2$.
\end{itemize}
\end{lemma}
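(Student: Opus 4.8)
The plan is to construct the \arc{x}{y} $f$ so that it runs \emph{parallel} to the arc $Z_1$, pushing $Z_1$ slightly into the interior of $S$ and thereby slicing off the thin strip between $f$ and $Z_1$ as a disk. The key observation is that Lemma~\ref{l:x=yorC=C'} already does all of the boundary and genus bookkeeping once we know the cut disconnects $S$: since $x,y$ lie on the common component $\partial D_i$, that lemma identifies the boundary of $\cut(S,f)$ as $\{\partial D_j \colon j \neq i\} \cup (A_1 \cup Z_1) \cup (A_2 \cup Z_2)$, and it asserts that when $\cut(S,f)$ is disconnected the genera of the two components sum to $g(S)$. Thus the only genuinely new content to establish is the existence of an \arc{x}{y} for which $\cut(S,f)$ is disconnected with one component a disk.

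First I would invoke the collar neighbourhood theorem to fix a homeomorphism of a neighbourhood of $\partial D_i$ onto $S^1 \times [0,1)$ sending $\partial D_i$ to $S^1 \times \{0\}$. Parametrising $S^1$ as $[0,2\pi)$, and using that $x \neq y$ (otherwise $\partial D_i \setminus \partial f$ would be connected and there would be no two arcs $Z_1,Z_2$), I may assume $x$ corresponds to the angle $0$ and $y$ to an angle $\alpha \in (0,2\pi)$, so that $Z_1 = \{(\theta,0) \colon \theta \in (0,\alpha)\}$ and $Z_2 = \{(\theta,0) \colon \theta \in (\alpha,2\pi)\}$. Fixing a small $\epsilon \in (0,1)$, I define $f$ to run from $x$ inward into the interior to $(0,\epsilon)$, then along the arc $\{(\theta,\epsilon) \colon \theta \in [0,\alpha]\}$ parallel to $Z_1$, and finally back out to $y = (\alpha,0)$. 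This map is injective, satisfies $f(0)=x$ and $f(1)=y$, and its interior has positive second coordinate, hence avoids $\partial S$; so it is a valid \arc{x}{y}. By passing to a suitable refinement of the triangulation we may, as in the earlier set-up, take $A = f([0,1])$ to be polygonal along interior edges, so that the cutting construction applies.

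The image $A$ together with $\overline{Z_1}$ bounds the closed rectangle $R = \{(\theta,s) \colon \theta \in [0,\alpha],\ s \in [0,\epsilon]\}$, which is an embedded disk since $\alpha < 2\pi$. Its only part lying in the interior of $S$ is exactly the three sides making up $A$; the fourth side is $\overline{Z_1} \subseteq \partial S$. Hence un-identifying the edges of $A$ seals $R$ off from the remainder of $S$, so that $\cut(S,f)$ has a connected component homeomorphic to $R$. This component has genus $0$, and its boundary is the cycle formed by $A_1$ (the three interior sides) and $Z_1$ (its base), which is the first of the two asserted components.

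The remaining claims then follow from Lemma~\ref{l:x=yorC=C'}. Labelling the copies of $A$ so that $A_1$ lies on $R$, that lemma gives that the second component of $\cut(S,f)$ has boundary $\{\partial D_j \colon j \neq i\}$ together with the cycle $A_2 \cup Z_2$; and since the cut is disconnecting, it also gives that the genera of the two components sum to $g(S)$, whence the second component has genus $g(S) - 0 = g(S)$. The step requiring the most care is verifying rigorously that $R$ really is a full connected component of $\cut(S,f)$, i.e.\ that no point outside $R$ remains joined to its interior once the three interior sides are un-identified; this is exactly where the collar coordinates and the fact that the base of $R$ lies in $\partial S$ do the essential work, and it is the part I would write out most carefully.
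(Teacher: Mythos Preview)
Your proof is correct and rests on the same geometric idea as the paper's: construct $f$ inside a collar of $\partial D_i$, running parallel to $Z_1$, so that the region between $f$ and $Z_1$ is manifestly a disk, and then invoke Lemma~\ref{l:x=yorC=C'} for the remaining bookkeeping.

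The technical realisation, however, differs. You appeal directly to the collar neighbourhood theorem to obtain explicit coordinates $S^1 \times [0,1)$ near $\partial D_i$ and write down $f$ by hand. The paper instead glues an abstract annulus $K$ onto $S$ along $\partial D_i$, observes via the classification theorem that the resulting surface $S'$ is homeomorphic to $S$, chooses the cutline inside the glued annulus (where it is obvious that one side of the cut is a disk disjoint from the old boundary), and then transports everything back through the homeomorphism $\Psi \colon S \to S'$. Your route is more self-contained and avoids the classification theorem; the paper's route trades the collar theorem for a reduction to cutting an annulus, which some readers may find cleaner. Either way the substance is the same, and your identification of Lemma~\ref{l:x=yorC=C'} as carrying all the genus and boundary accounting is exactly right.
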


\begin{proof}
Let $K$ be an annulus, that is, a closed disk $E$ with an open disk $F$ removed from its interior. Let $\psi$ be a homeomorphism from $E$ to $D_i$, and let $S'$ be the space obtained by quotienting $S \dot\cup K$ by $\psi$. 

It is clear that $S'$ is a compact orientable surface with the same number of holes as $S$, and so by the classification theorem for compact surfaces there is a homeomorphism $\Psi$ from $S$ to $S'$. Let $\hat{x} = \Psi(x)$ and $\hat{y} = \Psi(y)$ and note that $\hat{x},\hat{y} \in \partial E$.

There is some \arc{\hat{x}}{\hat{y}} $\hat{f}$ in $K$ and by Lemma \ref{l:x=yorC=C'} there are two components of $\cut(K,\hat{f})$, both of which are planar and one of which doesn't meet $\partial E$. We may assume that this component is the one which contains $\Psi(Z_1)$, since there is a homeomorphism of $K$ exchanging $\Psi(Z_1)$ and $\Psi(Z_2)$.

If we let $f = \Psi^{-1} \circ \hat{f}$ we see that $f$ is indeed an \arc{x}{y} satisfying the first bullet point, since the component of $\cut(S,f)$ containing $Z_1$ is just the image of the component of $\cut(K,\hat{f})$ containing $\Psi(Z_1)$. It then follows from Lemma \ref{l:x=yorC=C'} that the second bullet point is also true.
\end{proof}

\section{Relating cops and robbers to a topological game}

\subsection{The topological Marker-Cutter game} As announced in the introduction, we will now introduce a two-player game player on a compact orientable surface $S$, and then relate strategies in this topological game on $S$ to cop strategies in the cop and robber game for graphs embeddable on $S$. 

\begin{defn}[Topological Marker-Cutter game]\label{d:topgame}
The \emph{Topological Marker-Cutter game} is a game played by two players, Marker and Cutter, on some compact orientable surface $S$. We let $S_0 = S$ and $X_0 =\emptyset$.

In a general turn we have some compact orientable surface with boundary $S_k$ together with some finite set $X_k \subset \partial S_k$. First, Marker chooses two, not necessarily distinct, points $x$ and $y$ in $X_k \cup \Int{S_k}$, where $\Int{S_k}$ is the interior of $S_k$. Cutter responds by choosing an \arc{x}{y} $f_{k+1}$ in $S_k$. Cutter also chooses a component $S_{k+1}$ of $\cut(S_k,f_{k+1})$ and we set $X_{k+1} := \phi^{-1}(X_k \cup \{x,y\})\cap S_{k+1}$, where $\phi \colon \cut(S_k,f_{k+1}) \to S_k$ is the pasting map. Let $A_{k+1}, A'_{k+1} \subset \cut(S_k,f_{k+1})$ be the boundary arcs originating from $f_{k+1}([0,1])$ in $S_k$.

In this way a \emph{play} of the game can be represented by a pair of sequences $(f_i, S_i \colon i \in \mathbb{N})$ where each $f_i$ is a \cutline\ in $S_{i-1}$.  Given a play of the game $(f_i, S_i \colon i \in \mathbb{N})$ the set of \emph{active boundary arcs} at turn $k$ is
\[
\mathcal{A}_k = \{ A_i \colon A_i \subset \partial S_k \} \cup \{ A'_i \colon A'_i \subset \partial S_k \},
\]
the set of \emph{active indices} is $I_k = \{ i \colon A_i \in \mathcal{A}_k \text{   or   } A'_i \in \mathcal{A}_k \}$ and we let $a_k = |I_k|$. Given a play the \emph{score} of the game is $\sup_{i \in \mathbb{N}} a_i$, the supremum of the number of active indices at any turn in the game. Marker's aim in the game is to minimise the score and Cutter's aim is to maximise the score.

For a given surface $S$ let us define
\[
v(S) = \min \{t \colon \text{ Marker has a strategy to limit the score to at most } t \}.
\]
If Marker has no strategy bounding the score of the game, then we let $v(S)$ be infinite.

\end{defn}

The following lemma follows straightforwardly from Lemmas \ref{l:x=yorC=C'} and \ref{l:xneqyandCneqC'}.

\begin{lemma}\label{l:boundaryprop}
For a partial play $(f_i, S_i \colon i \in [k])$ in the topological Marker-Cutter game we have the following facts:
\begin{itemize}
\item $\partial S_k \subseteq \bigcup_{i\in [k]} (A_i \cup A'_i) \cup \partial S$;
\item Every component of $\partial S_k$ is homeomorphic to a cycle and each active boundary arc $B \in \mathcal{A}_k$ is a segment of one of these cycles;
\item $X_k = \bigcup_{B \in \mathcal{A}_k} \partial B$.
\end{itemize}
\end{lemma}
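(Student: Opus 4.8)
\emph{The plan} is to establish all three statements simultaneously by induction on the turn number $k$. For the base case $k=0$ we have $S_0 = S$, $X_0 = \emptyset$ and $\mathcal{A}_0 = \emptyset$, so the first and third bullets hold trivially (reading off as $\partial S \subseteq \partial S$ and $\emptyset = \emptyset$), while the second reduces to the fact recorded in Section~\ref{s:top} that the boundary of a compact orientable surface is a disjoint union of simple closed curves.

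For the inductive step, assume the three statements for the partial play up to turn $k$ and consider Cutter's move: Marker picks $x,y \in X_k \cup \Int{S_k}$, and Cutter produces a \cutline\ $f_{k+1}$, the two arcs $A_{k+1}, A'_{k+1}$ arising from it, and a component $S_{k+1}$ of $\cut(S_k, f_{k+1})$. The decisive consequence of the third bullet of the inductive hypothesis is that the endpoints of the active arcs are precisely the points of $X_k$; combined with the fact that the interior of a \cutline\ is disjoint from $\partial S_k$, this shows that each of $x,y$ is either an interior point of $S_k$ or a junction where active arcs meet, so the \cutline\ never passes through the relative interior of an active arc. Hence every active arc $A_i, A'_i$ with $i \leq k$ survives the cut intact. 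I then split according to Cutter's move: if $x=y$ or $x,y$ lie on a common boundary cycle then Lemma~\ref{l:x=yorC=C'} applies, and otherwise Lemma~\ref{l:xneqyandCneqC'} applies. In either case these lemmas give an explicit description of $\partial(\cut(S_k,f_{k+1}))$: the boundary cycles of $S_k$ disjoint from $\{x,y\}$ are carried over unchanged, and the one or two cycles meeting $\{x,y\}$ are reorganised into new cycles assembled from $A_{k+1}$, $A'_{k+1}$ and the surviving segments of the old cycles.

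Each bullet then follows from this description together with the identity $\partial S_{k+1} = \partial(\cut(S_k,f_{k+1})) \cap S_{k+1}$. The first bullet holds because $\partial S_{k+1} \subseteq \partial(\cut(S_k,f_{k+1})) \subseteq \partial S_k \cup A_{k+1} \cup A'_{k+1}$, and the inductive hypothesis bounds $\partial S_k$ by $\bigcup_{i\in[k]}(A_i\cup A'_i)\cup \partial S$. For the second bullet, the structure lemmas exhibit every component of $\partial(\cut(S_k,f_{k+1}))$ as a simple closed curve, whence the same holds for $\partial S_{k+1}$; moreover, since the \cutline\ attaches only at junctions or interior points, each surviving old active arc is a segment of exactly one new cycle, and $A_{k+1}, A'_{k+1}$ are segments of the freshly formed cycles by construction. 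For the third bullet I track endpoints explicitly: the endpoints of $A_{k+1}$ and $A'_{k+1}$ are preimages of $\{x,y\}$ under the pasting map $\phi$, and the endpoints of the surviving old arcs are those preimages of $X_k$ lying in $S_{k+1}$; matching these against the defining formula $X_{k+1} = \phi^{-1}(X_k \cup \{x,y\}) \cap S_{k+1}$ gives the claimed equality.

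The step requiring genuine care, rather than mere bookkeeping, is the third bullet: I must check that no point of $\phi^{-1}(X_k \cup \{x,y\}) \cap S_{k+1}$ either fails to be, or is spuriously counted as, an arc endpoint. This forces a separate inspection of the degenerate configurations --- when $x=y$, when one or both of $x,y$ lie in $\Int{S_k}$, and when a former junction of $X_k$ coincides with $x$ or $y$ --- in each of which the relevant cycle $\partial D$ of Lemma~\ref{l:x=yorC=C'} or~\ref{l:xneqyandCneqC'} degenerates (for instance $\partial D \setminus \partial f$ collapses to the single point $\{x\}$). In each case one confirms the endpoint count still matches, which is precisely what the parenthetical conventions in those two lemmas encode, so the verification, though it must be carried out, is routine once the cases are laid out.
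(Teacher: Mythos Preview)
Your proposal is correct and follows exactly the route the paper indicates: the paper gives no detailed proof at all, merely stating that the lemma ``follows straightforwardly from Lemmas~\ref{l:x=yorC=C'} and~\ref{l:xneqyandCneqC'}'', and your induction on $k$ invoking those two structure lemmas is precisely the argument being gestured at. Your care with the degenerate cases in the third bullet is appropriate and matches the parenthetical conventions those lemmas set up.
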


\subsection{Bounding the cop number in terms of the Marker-Cutter game}
We now show how to transfer a strategy for Marker in the topological Marker-Cutter game on a surface $S$ to a strategy for the cops in the cops and robbers game on a graph drawn on $S$. In this way we will show that Theorem \ref{t:main} follows from Theorem \ref{t:PC}.

We say that a cop \emph{guards} a set $C$ of vertices of $G$ if whenever the robber moves to a vertex of $C$ he is caught by that cop on the next move. For a subgraph $H$ of $G$, we say that a cop guards $C$ in $H$, if she guards $C$ given that the robber only moves along edges of $H$. We call $C \subseteq V(G)$ \emph{guardable}, if there is a strategy for a single cop $c$ in which, after finitely many steps, $c$ guards $C$, and for $R \subseteq V(G)$ and $H \subseteq G$, we call $C$ \emph{guardable relative to $R$ in $H$}, if after finitely many steps, during which the robber stays in $R$, the cop guards $C$ in $H$.

The following lemma, which originates from Aigner and Fromme \cite{AF84}, forms the cornerstone of essentially all known upper bounds on the cop number of graphs.

\begin{lemma}[\cite{AF84}, Lemma 4]
\label{lem:guardshortestpath}
For $x,y \in V(G)$, the vertex set of any shortest path from $x$ to $y$ is guardable.
\end{lemma}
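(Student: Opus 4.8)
The plan is to reprove the classical guarding argument of Aigner and Fromme, with the isometry of a shortest path as the crucial ingredient. Write the shortest path as $P = v_0 v_1 \cdots v_m$ with $v_0 = x$ and $v_m = y$, and let $d$ denote the graph distance in $G$. Since $P$ is geodesic we have $d(v_i, v_j) = |i - j|$, so $P$ is an isometric subpath. For a vertex $u$ set $\ell(u) = d(u, v_0)$, the distance to the left endpoint. The first thing I would establish, from the triangle inequality, is that for every vertex $u$ and every index $i$ one has $|\ell(u) - i| \leq d(u, v_i)$: indeed $i = d(v_0, v_i) \leq \ell(u) + d(u, v_i)$ and $\ell(u) \leq i + d(u, v_i)$. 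In particular, if $u$ is adjacent to $v_i$ then $|\ell(u) - i| \leq 1$, so a vertex off the path can be adjacent only to the three consecutive vertices $v_{\ell(u)-1}, v_{\ell(u)}, v_{\ell(u)+1}$ of $P$.

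This suggests maintaining the cop at the vertex $v_c$ with index $c = \min(\ell(r), m)$, where $r$ is the robber's current position. I would argue that once this invariant holds after a cop move, the path is guarded: if the robber then steps onto some $v_i$, it must have been adjacent to $v_i$, hence $|\ell(r) - i| \leq 1$; a short case check (according to whether $\ell(r) \leq m$ or $\ell(r) > m$) yields $|c - i| \leq 1$, so the cop catches the robber on her next move. Maintaining the invariant is possible because a single robber move changes $\ell(r)$ by at most one, hence changes $\min(\ell(r), m)$ by at most one, so the cop can follow along $P$ by a single step.

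It remains to reach the invariant from an arbitrary starting configuration, and this is the step I expect to require the most care. First the cop walks to $v_0$ in finitely many moves. Then, in a catch-up phase, the cop repeatedly moves one step towards larger index whenever $c < \min(\ell(r), m)$. To see that this terminates, I would track the potential $\Phi = \min(\ell(r), m) - c \geq 0$: each cop step decreases $\Phi$ by one, and the subsequent robber move increases it by at most one, so $\Phi$ never grows; moreover $\Phi$ can remain constant only if the robber strictly increases $\ell(r)$ every round, which is impossible indefinitely since $\min(\ell(r), m)$ is capped at $m$. Hence $\Phi$ reaches $0$ after finitely many rounds, establishing the invariant, after which the guarding phase above applies forever. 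The only genuinely delicate point is this finiteness argument for the catch-up; everything else is a direct consequence of the isometry of $P$ together with the triangle inequality.
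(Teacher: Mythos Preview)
Your argument is correct and is precisely the classical shadow-on-a-geodesic proof of Aigner and Fromme: project the robber to the path via $c=\min(\ell(r),m)$, use the isometry of $P$ to bound how far the robber can jump onto $P$, and use the cap at $m$ to force the catch-up potential to hit zero. The paper itself does not supply a proof of this lemma at all---it simply cites \cite{AF84}, Lemma~4---so there is nothing to compare against beyond noting that what you wrote is the standard proof behind that citation.
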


%(Josh) : Perhaps mention this here?
This will also be the only tool we need to give our strategy for the cops. Furthermore, we remark that the near-optimality of Theorem \ref{t:PC} seems to suggest that Theorem \ref{t:main} is perhaps a natural limit, at least asymptotically, to what can be proved using only this tool.

\begin{lemma}
\label{lem:relativeguard}
Let $f\colon G' \to G$ be a homomorphism of graphs $G'$ and $G$, and let $C,R \subset V(G)$ with $C\cup R = V(G)$. Assume that every edge incident to $R$ is the image of some edge under $f$, and that the restriction of $f$ to $f^{-1}(R)$ is an isomorphism onto $R$. If $f^{-1}(C)$ is guardable relative to $f^{-1}(R)$ in $G'$, then $C$ is guardable relative to $R$ in $G$.
\end{lemma}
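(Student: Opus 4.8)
The plan is to let the cop in $G$ shadow a cop who plays the given guarding strategy in $G'$, lifting the robber's moves up to $G'$ and projecting the virtual cop's moves back down to $G$. Write $\iota \colon R \to f^{-1}(R)$ for the inverse of the isomorphism $f|_{f^{-1}(R)}$, so that $\iota(r)$ is the unique preimage of $r$ lying in $f^{-1}(R)$, and let $\sigma'$ witness that $f^{-1}(C)$ is guardable relative to $f^{-1}(R)$ in $G'$. Alongside the real game on $G$, I would run a virtual game on $G'$ in which the virtual robber occupies $\iota(r)$ whenever the real robber occupies $r \in R$, the virtual cop follows $\sigma'$, and the real cop always stands on the $f$-image of the virtual cop's position.

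First I would check that this shadowing is legal. As long as the real robber moves inside $R$, a move $r_1 \to r_2$ (with $r_1 = r_2$ or $r_1 r_2 \in E(G)$) lifts, via the isomorphism $f|_{f^{-1}(R)}$, to a legal virtual move $\iota(r_1) \to \iota(r_2)$; in particular the virtual robber remains in $f^{-1}(R)$, so the hypothesis governing $\sigma'$ is satisfied. Conversely, since $f$ is a graph homomorphism, any virtual cop move $p' \to q'$ projects to the legal cop move $f(p') \to f(q')$ in $G$. Hence after the finitely many steps prescribed by $\sigma'$---throughout which the real robber stays in $R$, and so the virtual robber stays in $f^{-1}(R)$---the virtual cop guards $f^{-1}(C)$ in $G'$, while the real cop sits on its image.

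The remaining, and central, task is to transfer the guarding property itself. Suppose from now on that the real robber, still uncaught and hence at some vertex $r \in R$, tries to enter $C$ by moving to a vertex $c \in C$ (so $c = r$ or $rc \in E(G)$), while the virtual robber sits at $\iota(r)$ and the virtual cop at some vertex $p'$. The edge or vertex $rc$ is incident to $R$, so by hypothesis it is the image under $f$ of some edge or vertex $u'v'$ of $G'$ with $f(u') = r$ and $f(v') = c$. Since $f(u') = r \in R$ we have $u' \in f^{-1}(R)$, and injectivity of $f$ on $f^{-1}(R)$ forces $u' = \iota(r)$; moreover $v' \in f^{-1}(C)$ because $f(v') = c \in C$. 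Thus $\iota(r) \to v'$ is a legal virtual-robber move into $f^{-1}(C)$, so the guarding virtual cop catches it: $p'$ is equal or adjacent to $v'$ in $G'$. Applying the homomorphism $f$, the real cop's vertex $f(p')$ is equal or adjacent to $f(v') = c$, so the real cop can step onto $c$ and catch the real robber. This shows the real cop guards $C$ in $G$, completing the argument.

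I expect the main obstacle to be precisely this last transfer step. Its delicate point is lifting the robber's escape move $rc$ into $C$ to a virtual move into $f^{-1}(C)$ whose tail lands exactly on the current virtual robber vertex $\iota(r)$: this is where the two hypotheses are genuinely needed, with \emph{``every edge incident to $R$ is the image of an edge under $f$''} supplying the lift $u'v'$ and the injectivity of $f|_{f^{-1}(R)}$ pinning down $u' = \iota(r)$, so that the virtual guard applies and can be pushed back down through $f$.
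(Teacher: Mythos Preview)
Your proposal is correct and follows essentially the same approach as the paper's proof: both set up a virtual (``fake'') game in $G'$, lift the robber's moves via the isomorphism on $f^{-1}(R)$, project the cop's moves via the homomorphism $f$, and then argue that the robber's escape edge into $C$ lifts to an edge from the virtual robber's current position into $f^{-1}(C)$, allowing the guard to transfer. Your write-up is in fact slightly more explicit than the paper's at the key step, spelling out why the lifted edge has its tail at $\iota(r)$ via injectivity of $f|_{f^{-1}(R)}$.
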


\begin{proof}
Let $C'= f^{-1}(C)$ and $R' = f^{-1}(R)$. Let us denote by $r \in R$ the robber's initial position, and let $r' = f^{-1}(r)$. We may assume that the cop is at some position $c$ such that $c' = f^{-1}(c)$ exists. 

Let us imagine that we have a `fake-robber' at $r'$ and a `fake-cop' at $c'$ in $G'$. Our strategy for the cop will be to maintain a game state in the fake game such that $r' = f^{-1}(r)$ and $c' = f^{-1}(c)$. By assumption the fake-cop has a strategy which ensures that she guards $C'$ in $G'$ after finitely many steps, assuming the fake-robber stays in $R'$. 

Each time the robber moves in $R$, since the restriction of $f$ to $R'$ is an isomorphism onto $R$, there is a legal move for the fake-robber which maintains the property that the fake-robber is at the image of the robber's location under $f^{-1}$. Note that, whilst the robber stays in $R$, then under this strategy the fake-robber stays in $R'$.

As a response to this move of fake-robber, fake-cop's strategy leads her to move somewhere in $G'$ and, since $f$ is a homomorphism, there is a legal move for the cop in $G$ which maintains the property that the cop is at the image of the fake-cop's location under $f$. Hence, as long as the robber stays in $R$, the cop can follow this strategy.

After a finite number of steps following this strategy the fake-cop guards $C'$ in $G'$. We claim that at this point, by continuing to mirror the strategy of the fake cop, the cop guards $C$ in $G$.

Indeed, suppose that after this point the robber moves to a vertex of $C$. Since $V(G) = R \cup C$, the first time this happens he moves along some edge $e=rx$, where $r \in R$ and $x \in C$, and so up to this point the robber was in $R$ and so the cop could continue to follow her strategy. By assumption, $e$ is the image of some edge under $f$, and so there is $x' \in C'$ with $f(x') = x$ and an edge $e'$ connecting $r'=f^{-1}(r)$ to $x'$. 

Hence, the fake-robber at $r'$ has a legal move to $x' \in C'$ in $G'$. However, since fake-cop is guarding $C'$ relative to $R'$, it follows that the current position $c'$ of the fake-cop in $G'$ is adjacent to $x'$. Our strategy guarantees that the cop is at the image of $c'$ under $f$, and since $f$ is a homomorphism it follows that $f(c')$ is adjacent to $f(x')=x$. Hence, the cop in $G$ can catch the robber on the next move, and thus is guarding $C$ in $G$.
\end{proof}

In the following definitions, by an \emph{arc} we always mean a non-trivial arc, i.e.\ a subspace homeomorphic to the closed unit interval.

\begin{defn}
Given a graph $G=(V,E)$ and a compact orientable surface $S$, an \emph{embedding of $G$ on $S$} is a map $\sigma$ such that:
\begin{itemize}
    \item $\sigma$ maps distinct vertices of $V$ to distinct points of $S$;
    \item $\sigma$ maps an edge $xy \in E$ to a simple $\sigma(x)-\sigma(y)$ arc in $S$;
    \item No inner point of $\sigma(xy)$ is the image of a vertex or lies on the image of another edge.
\end{itemize}
\end{defn}

The \emph{genus} of a graph $G$ can then be defined as the smallest $g$ such that $G$ has an embedding on a compact orientable surface $S$ with $g(S) =g$. It will be more convenient for us to work with a slightly different notion of embedding graphs on a surface.

\begin{defn}
\label{def:painting}
Given a graph $G$ and a compact orientable surface $S$, a \emph{painting of $G$ on $S$} is a family $\mathcal{P} = (D_v \colon v \in V(G))$ such that:
\begin{itemize}
    \item $D_v \subset S$ is homeomorphic to a closed disk for each $v \in V(G)$;
    \item If $uv \in E(G)$, then $D_u \cap D_v$ is a disjoint union of arcs contained in $\partial D_u \cap \partial D_v$;
    \item If $uv \notin E(G)$, then $D_u \cap D_v = \emptyset$;
    \item $D_u \cap D_v \cap D_w = \emptyset$ for any distinct  $u,v,w \in V(G)$.
\end{itemize}
\end{defn}

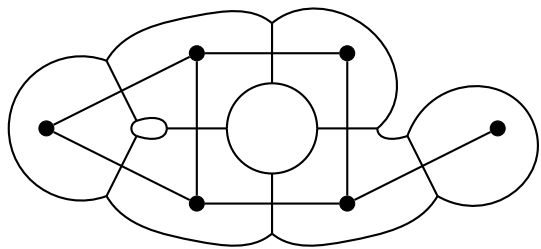
\begin{figure}[ht!]

\begin{tikzpicture}[use Hobby shortcut]

\node[draw,circle,inner sep=2,fill=black] (v1) at (-3,0){};
\node[draw,circle,inner sep=2,fill=black] (v2) at (-1,-1){};
\node[draw,circle,inner sep=2,fill=black] (v3) at (-1,1){};
\node[draw,circle,inner sep=2,fill=black] (v4) at (1,-1){};
\node[draw,circle,inner sep=2,fill=black] (v5) at (1,1){};
\node[draw,circle,inner sep=2,fill=black] (v6) at (3,0){};

%nodes for painting
\node[circle,inner sep=0] (v1out) at (-3.5,0){};
\node[circle,inner sep=0] (v2out) at (-1,-1.5){};
\node[circle,inner sep=0] (v3out) at (-1,1.5){};
\node[circle,inner sep=0] (v4out) at (1,-1.5){};
\node[circle,inner sep=0] (v5out) at (1,1.5){};
\node[circle,inner sep=0] (v6out) at (3.5,0){};

\node[circle,inner sep=0] (e12b) at (-2.2,-0.9){};
\node[circle,inner sep=0] (e12t) at (-1.8,-0.1){};
\node[circle,inner sep=0] (e13t) at (-2.2,0.9){};
\node[circle,inner sep=0] (e13b) at (-1.8,0.1){};
\node[circle,inner sep=0] (e23l) at (-1.4,0){};
\node[circle,inner sep=0] (e23r) at (-.6,0){};
\node[circle,inner sep=0] (e35t) at (0,1.4){};
\node[circle,inner sep=0] (e35b) at (0,.6){};
\node[circle,inner sep=0] (e24t) at (0,-.6){};
\node[circle,inner sep=0] (e24b) at (0,-1.4){};
\node[circle,inner sep=0] (e45l) at (.6,0){};
\node[circle,inner sep=0] (e45r) at (1.4,0){};
\node[circle,inner sep=0] (e46b) at (2.2,-0.9){};
\node[circle,inner sep=0] (e46t) at (1.8,-0.1){};

%intersections in painting
\path[draw,thick] (e12b.center)--(e12t.center);
\path[draw,thick] (e13b.center)--(e13t.center);
\path[draw,thick] (e23l.center)--(e23r.center);
\path[draw,thick] (e45l.center)--(e45r.center);
\path[draw,thick] (e35b.center)--(e35t.center);
\path[draw,thick] (e24b.center)--(e24t.center);
\path[draw,thick] (e46b.center)--(e46t.center);

%painting
\path[thick, draw,out angle=200, in angle=160] (e12b.center)..(v1out.center)..(e13t.center);
\path[thick, draw,out angle=160, in angle=200] (e12t.center)..(e13b.center);

\path[thick, draw,out angle=-60, in angle=220] (e12b.center)..(v2out.center)..(e24b.center);
\path[thick, draw,out angle=180, in angle=-90] (e24t.center)..(e23r.center);
\path[thick, draw,out angle=-90, in angle=-20] (e23l.center)..(e12t.center);

\path[name path=p1,thick, draw,out angle=60, in angle=-220] (e13t.center)..(v3out.center)..(e35t.center);
\path[name path=p2,thick, draw,out angle=180, in angle=90] (e35b.center)..(e23r.center);
\path[name path=p3, thick, draw,out angle=90, in angle=20] (e23l.center)..(e13b.center);

\path[thick, draw,out angle=-120, in angle=-40] (e46b.center)..(v4out.center)..(e24b.center);
\path[thick, draw,out angle=0, in angle=-90] (e24t.center)..(e45l.center);
\path[thick, draw,out angle=-90, in angle=200] (e45r.center)..(e46t.center);

\path[thick, draw,out angle=40, in angle=40] (e35t.center)..(v5out.center)..(e45r.center);
\path[thick, draw,out angle=90, in angle=0] (e45l.center)..(e35b.center);

\path[thick, draw,out angle=70, in angle=-30] (e46t.center)..(v6out.center)..(e46b.center);

%graph edges
\path[draw,thick] (v1)--(v2);
\path[draw,thick] (v1)--(v3);
\path[draw,thick] (v2)--(v3);
\path[draw,thick] (v2)--(v4);
\path[draw,thick] (v3)--(v5);
\path[draw,thick] (v4)--(v5);
\path[draw,thick] (v4)--(v6);

\end{tikzpicture}
\caption{A graph $G$ and a painting $\mathcal{P}$ of $G$}
\end{figure}

It is easy to see that a graph has an embedding on a compact surface of genus $g$ if and only if it has a painting on that surface. Furthermore, a set of closed disks in a surface is a painting of some graph, if and only if any triple has empty intersection, and the intersection of any pair is either empty or a disjoint union of arcs contained in the boundary of both. 
Indeed, given a set $\mathcal P$ with these properties, we can define the \emph{canonical graph $G_{\mathcal P}$} of $\mathcal P$ to be the graph with vertex set $\mathcal P$ and an edge between $D$ and $D'$ if $D \cap D' \neq \emptyset$. Note that any graph that has a painting $\mathcal P$ is isomorphic to $G_{\mathcal P}$.
%, so we don't lose any generality by only considering the canonical graph of a painting.
%

Given a painting $\mathcal P$ on a surface $S$, let $F_{\mathcal{P}}= \bigcup_{D \in \mathcal P} D$. Note that, viewed as a subspace of $S$, $F_{\mathcal P}$ is itself an orientable surface with boundary, and its boundary is a subset of the union of the boundaries of the disks in $\mathcal P$. %
%Further note that we can define the surface $F_{\mathcal P}$ without referring to $S$ from a set of disks satisfying Definition \ref{def:painting} by identifying appropriate arcs.
%In particular, when we say that $\mathcal P$ is a painting without mentioning the surface $S$ we mean that $\mathcal P$ is a set of disks satisfying the properties from Definition \ref{def:painting} interpreted as a painting on $F_{\mathcal P}$ as above.
%(Josh) : Above is not necessary I don't think anymore, and also a bit ambiguous since the surface $S$ is necessary to make sure $F_\mathcal{P}$ is orientable.
A \emph{painting homomorphism} between two paintings $\mathcal P$ and $\mathcal P'$ is a map $\psi\colon \mathcal P \to \mathcal P'$, such that for $D_1,D_2 \in \mathcal P$ which touch, $\psi(D_1)$ and $\psi(D_2)$ also touch. We extend, in the usual manner, the notion of homomorphism to define isomorphisms between paintings. It is worth noting that our notion of painting homomorphism is combinatorial rather than topological. In particular, any painting homomorphism gives a homomorphism of the canonical graphs, but it is possible that two paintings $\mathcal P$ and $\mathcal P'$ are isomorphic while $F_{\mathcal P}$ and $F_{\mathcal P'}$ are not homeomorphic. For instance, let $\mathcal P$ consist of two disks intersecting in a single arc and let $\mathcal P'$ consist of two disks intersecting in multiple arcs.

We say that a \cutline \ $f$ (cf.\ Section~\ref{s:top}) in $F_{\mathcal P}$ induces a walk $W=(D_1,D_2,\dots,D_n)$ in $G_{\mathcal P}$ if there is a homeomorphism $s \colon [0,1] \to [0,1]$ such that for $a = f \circ s$ and every $D \in \mathcal P$ we have \[
a^{-1}(D) = \bigcup _{\{i\colon D_i=D\}} \left[\frac {i-1}n, \frac {i}n\right],
\]
Note that in particular $f^{-1}(D) = \emptyset$ for all $D \not\in W$ since the corresponding union is empty. Furthermore, the only values such that $a(z)$ lies in $D \cap D'$ for $D \neq D'$ are $z= \frac in$ for $1 \leq i \leq n-1$. In particular, $f(0)$ and $f(1)$ lie in a unique disk of $\mathcal P$. Since we are assuming that $f$ is a \cutline \ in $F_{\mathcal P}$, the definition of \cutline \ further implies that $a(\frac in)$ is an interior point of one of the intervals in $D_i \cap D_{i+1}$.

\begin{figure}[ht!]
\begin{tikzpicture}[use Hobby shortcut]

\begin{scope}[shift={(-4,0)}]

\node[draw,circle,inner sep=2,fill=black] (v1) at (-3,0){};
\node[draw,circle,inner sep=2,fill=black] (v2) at (-1,-1){};
\node[draw,circle,inner sep=2,fill=black] (v3) at (-1,1){};
\node[draw,circle,inner sep=2,fill=black] (v4) at (1,-1){};
\node[draw,circle,inner sep=2,fill=black] (v5) at (1,1){};
\node[draw,circle,inner sep=2,fill=black] (v6) at (3,0){};

%graph edges
\path[draw,thick] (v1)--(v2);
\path[draw,thick] (v1)--(v3);
\path[draw,thick] (v2)--(v3);
\path[draw,thick] (v2)--(v4);
\path[draw,thick] (v3)--(v5);
\path[draw,thick] (v4)--(v5);
\path[draw,thick] (v4)--(v6);

\begin{scope}[very thick,decoration={
    markings,
    mark=at position 0.5 with {\arrow{>}}}
    ] 
    \draw[postaction={decorate},thick, myred,] (v2) to [max distance=10pt, out=135,in=215] (v3);
    \draw[postaction={decorate},thick, myred,] (v3) to [max distance=10pt, out=315,in=45] (v2);
    \draw[postaction={decorate},thick, myred,] (v2) to [max distance=10pt, out=45,in=135] (v4);
    \draw[postaction={decorate},thick, myred,] (v4) to [max distance=10pt, out=135,in=215] (v5);
\end{scope}
\end{scope}

\begin{scope}[shift={(4,0)}]
	
	%upper nodes for painting
	\node[circle,inner sep=0] (v1out) at (-3.5,0){};
	\node[circle,inner sep=0] (v3out) at (-1,1.5){};
	
	\node[circle,inner sep=0] (e12b) at (-2.2,-0.9){};
	\node[circle,inner sep=0] (e12t) at (-1.8,-0.1){};
	\node[circle,inner sep=0] (e13t) at (-2.2,0.9){};
	\node[circle,inner sep=0] (e13b) at (-1.8,0.1){};
	\node[circle,inner sep=0] (e23l) at (-1.4,0){};
	\node[circle,inner sep=0] (e23r) at (-.6,0){};
	\node[circle,inner sep=0] (e35t) at (0,1.4){};
	\node[circle,inner sep=0] (e35b) at (0,.6){};
	\node[circle,inner sep=0] (e24t) at (0,-.6){};
	\node[circle,inner sep=0] (e45l) at (.6,0){};

	%boundary nodes
	
	\node[circle,inner sep=0] (v2out) at (-1,-1.5){};
	\node[circle,inner sep=0] (v5out) at (1,1.5){};
	\node[circle,inner sep=0] (v3in) at (-1,.5){};
	
	\node[circle,inner sep=0] (e23a) at (-1.2,0){};
	\node[circle,inner sep=0] (e23b) at (-0.8,0){};
	\node[circle,inner sep=0] (e24) at (0,-1){};
	\node[circle,inner sep=0] (e45) at (1,0){};
	
	\path[draw=myred,thick] (v2out.center)..(e23a.center)..(v3in.center)..(e23b.center)..(e24.center)..(e45.center)..(v5out.center);

	%lower copies of boundary nodes

	\node[circle,inner sep=0] (v2outlow) at (-1,-1.5){};
	\node[circle,inner sep=0] (v5outlow) at (1,1.5){};
	\node[circle,inner sep=0] (v3inlow) at (-1,.5){};
	
	\node[circle,inner sep=0] (e23alow) at (-1.2,0){};
	\node[circle,inner sep=0] (e23blow) at (-0.8,0){};
	\node[circle,inner sep=0] (e24low) at (0,-1){};
	\node[circle,inner sep=0] (e45low) at (1,0){};
	
	\path[draw=myred,thick] (v2outlow.center)..(e23alow.center)..(v3inlow.center)..(e23blow.center)..(e24low.center)..(e45low.center)..(v5outlow.center);

	%lower nodes for painting
	
	\node[circle,inner sep=0] (v4out) at (1,-1.5){};
	\node[circle,inner sep=0] (v6out) at (3.5,0){};
	
	\node[circle,inner sep=0] (e24b) at (0,-1.4){};
	\node[circle,inner sep=0] (e45r) at (1.4,0){};
	\node[circle,inner sep=0] (e46b) at (2.2,-0.9){};
	\node[circle,inner sep=0] (e46t) at (1.8,-0.1){};

	%intersections in painting
	\path[draw,thick] (e12b.center)--(e12t.center);
	\path[draw,thick] (e13b.center)--(e13t.center);
	\path[draw,thick] (e23l.center)--(e23a.center);
	\path[draw,thick] (e23alow.center)--(e23blow.center);
	\path[draw,thick] (e23b.center)--(e23r.center);
	\path[draw,thick] (e45l.center)--(e45.center);
	\path[draw,thick] (e45low.center)--(e45r.center);
	\path[draw,thick] (e35b.center)--(e35t.center);
	\path[draw,thick] (e24b.center)--(e24low.center);
	\path[draw,thick] (e24.center)--(e24t.center);
	\path[draw,thick] (e46b.center)--(e46t.center);
	
	%painting
	\path[thick, draw,out angle=200, in angle=160] (e12b.center)..(v1out.center)..(e13t.center);
	\path[thick, draw,out angle=160, in angle=200] (e12t.center)..(e13b.center);

	\path[thick, draw,out angle=-60, in angle=170] (e12b.center)..(v2out.center);
	\path[thick, draw,out angle=-10, in angle=220] (v2outlow.center)..(e24b.center);
	\path[thick, draw,out angle=180, in angle=-90] (e24t.center)..(e23r.center);
	\path[thick, draw,out angle=-90, in angle=-20] (e23l.center)..(e12t.center);

	\path[name path=p1,thick, draw,out angle=60, in angle=-220] (e13t.center)..(v3out.center)..(e35t.center);
	\path[name path=p2,thick, draw,out angle=180, in angle=90] (e35b.center)..(e23r.center);
	\path[name path=p3, thick, draw,out angle=90, in angle=20] (e23l.center)..(e13b.center);

	\path[thick, draw,out angle=-120, in angle=-40] (e46b.center)..(v4out.center)..(e24b.center);
	\path[thick, draw,out angle=0, in angle=-90] (e24t.center)..(e45l.center);
	\path[thick, draw,out angle=-90, in angle=200] (e45r.center)..(e46t.center);
	
	\path[thick, draw,out angle=40, in angle=158] (e35t.center)..(v5out.center);
	\path[thick, draw,out angle=-22, in angle=40] (v5outlow.center)..(e45r.center);
	\path[thick, draw,out angle=90, in angle=0] (e45l.center)..(e35b.center);
	
	\path[thick, draw,out angle=70, in angle=-30] (e46t.center)..(v6out.center)..(e46b.center);
	
\end{scope}

\end{tikzpicture}
\caption{A walk $W$ in $G$ and a \cutline \ $f$ in $\mathcal{P}$ which induces the walk $W$ in $G$}\label{f:walk}
\end{figure}
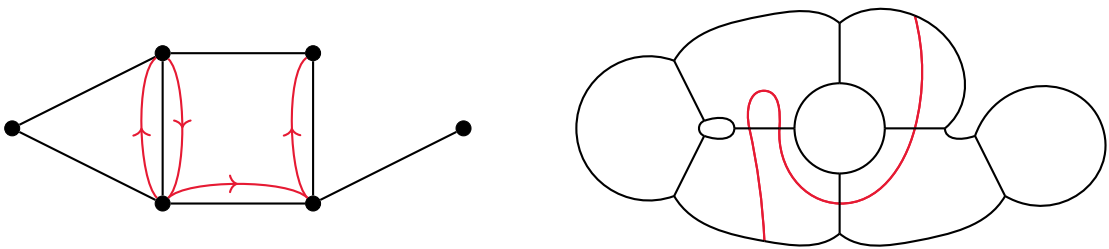

Let $f$ be a \cutline \ in $F_{\mathcal P}$ inducing a walk $W$ in $G_\mathcal{P}$. We define $\cut(\mathcal P,f) := \bigcup_{D \in \mathcal P} \mathcal C(D)$, where $\mathcal C(D)$ is the set of connected components of $\cut (D,f)$, where $\cut (D,f)$ is defined as in Definition~\ref{defn:pasting}. See Figures \ref{f:walk} and \ref{f:cut}.

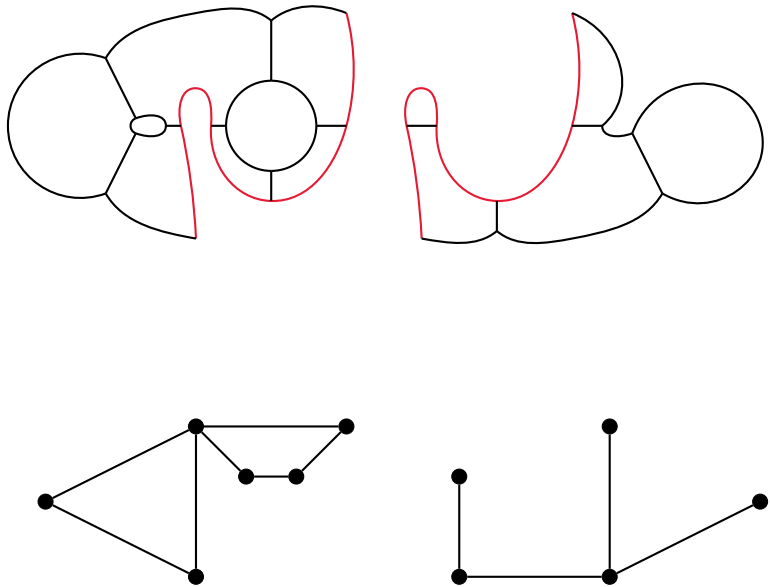
\begin{figure}[ht!]
	\begin{tikzpicture}[use Hobby shortcut]

	\begin{scope}[shift={(-1.5,0)}]
	%upper nodes for painting
	\node[circle,inner sep=0] (v1out) at (-3.5,0){};
	\node[circle,inner sep=0] (v3out) at (-1,1.5){};
	
	\node[circle,inner sep=0] (e12b) at (-2.2,-0.9){};
	\node[circle,inner sep=0] (e12t) at (-1.8,-0.1){};
	\node[circle,inner sep=0] (e13t) at (-2.2,0.9){};
	\node[circle,inner sep=0] (e13b) at (-1.8,0.1){};
	\node[circle,inner sep=0] (e23l) at (-1.4,0){};
	\node[circle,inner sep=0] (e23r) at (-.6,0){};
	\node[circle,inner sep=0] (e35t) at (0,1.4){};
	\node[circle,inner sep=0] (e35b) at (0,.6){};
	\node[circle,inner sep=0] (e24t) at (0,-.6){};
	\node[circle,inner sep=0] (e45l) at (.6,0){};
	\end{scope}
	
	%boundary nodes
	
	\begin{scope}[shift={(-1.5,0)}]
	\node[circle,inner sep=0] (v2out) at (-1,-1.5){};
	\node[circle,inner sep=0] (v5out) at (1,1.5){};
	\node[circle,inner sep=0] (v3in) at (-1,.5){};
	
	\node[circle,inner sep=0] (e23a) at (-1.2,0){};
	\node[circle,inner sep=0] (e23b) at (-0.8,0){};
	\node[circle,inner sep=0] (e24) at (0,-1){};
	\node[circle,inner sep=0] (e45) at (1,0){};
	
	\path[draw=myred,thick] (v2out.center)..(e23a.center)..(v3in.center)..(e23b.center)..(e24.center)..(e45.center)..(v5out.center);
	
	\end{scope}
	
	%lower copies of boundary nodes
	\begin{scope}[shift={(1.5,0)}]
	
	\node[circle,inner sep=0] (v2outlow) at (-1,-1.5){};
	\node[circle,inner sep=0] (v5outlow) at (1,1.5){};
	\node[circle,inner sep=0] (v3inlow) at (-1,.5){};
	
	\node[circle,inner sep=0] (e23alow) at (-1.2,0){};
	\node[circle,inner sep=0] (e23blow) at (-0.8,0){};
	\node[circle,inner sep=0] (e24low) at (0,-1){};
	\node[circle,inner sep=0] (e45low) at (1,0){};
	
	\path[draw=myred,thick] (v2outlow.center)..(e23alow.center)..(v3inlow.center)..(e23blow.center)..(e24low.center)..(e45low.center)..(v5outlow.center);
		
	\end{scope}

	%lower nodes for painting
	
	\begin{scope}[shift={(1.5,0)}]
	\node[circle,inner sep=0] (v4out) at (1,-1.5){};
	\node[circle,inner sep=0] (v6out) at (3.5,0){};
	
	\node[circle,inner sep=0] (e24b) at (0,-1.4){};
	\node[circle,inner sep=0] (e45r) at (1.4,0){};
	\node[circle,inner sep=0] (e46b) at (2.2,-0.9){};
	\node[circle,inner sep=0] (e46t) at (1.8,-0.1){};
	\end{scope}
	
	%intersections in painting
	\path[draw,thick] (e12b.center)--(e12t.center);
	\path[draw,thick] (e13b.center)--(e13t.center);
	\path[draw,thick] (e23l.center)--(e23a.center);
	\path[draw,thick] (e23alow.center)--(e23blow.center);
	\path[draw,thick] (e23b.center)--(e23r.center);
	\path[draw,thick] (e45l.center)--(e45.center);
	\path[draw,thick] (e45low.center)--(e45r.center);
	\path[draw,thick] (e35b.center)--(e35t.center);
	\path[draw,thick] (e24b.center)--(e24low.center);
	\path[draw,thick] (e24.center)--(e24t.center);
	\path[draw,thick] (e46b.center)--(e46t.center);
	
	%painting
	\path[thick, draw,out angle=200, in angle=160] (e12b.center)..(v1out.center)..(e13t.center);
	\path[thick, draw,out angle=160, in angle=200] (e12t.center)..(e13b.center);

	\path[thick, draw,out angle=-60, in angle=170] (e12b.center)..(v2out.center);
	\path[thick, draw,out angle=-10, in angle=220] (v2outlow.center)..(e24b.center);
	\path[thick, draw,out angle=180, in angle=-90] (e24t.center)..(e23r.center);
	\path[thick, draw,out angle=-90, in angle=-20] (e23l.center)..(e12t.center);

	\path[name path=p1,thick, draw,out angle=60, in angle=-220] (e13t.center)..(v3out.center)..(e35t.center);
	\path[name path=p2,thick, draw,out angle=180, in angle=90] (e35b.center)..(e23r.center);
	\path[name path=p3, thick, draw,out angle=90, in angle=20] (e23l.center)..(e13b.center);

	\path[thick, draw,out angle=-120, in angle=-40] (e46b.center)..(v4out.center)..(e24b.center);
	\path[thick, draw,out angle=0, in angle=-90] (e24t.center)..(e45l.center);
	\path[thick, draw,out angle=-90, in angle=200] (e45r.center)..(e46t.center);
	
	\path[thick, draw,out angle=40, in angle=158] (e35t.center)..(v5out.center);
	\path[thick, draw,out angle=-22, in angle=40] (v5outlow.center)..(e45r.center);
	\path[thick, draw,out angle=90, in angle=0] (e45l.center)..(e35b.center);
	
	\path[thick, draw,out angle=70, in angle=-30] (e46t.center)..(v6out.center)..(e46b.center);

\begin{scope}[shift={(0,-5))}]

	\begin{scope}[shift={(-1.5,0)}]
	\node[draw,circle,inner sep=2,fill=black] (v1) at (-3,0){};
\node[draw,circle,inner sep=2,fill=black] (v2) at (-1,-1){};
\node[draw,circle,inner sep=2,fill=black] (v3) at (-1,1){};
\node[draw,circle,inner sep=2,fill=black] (v4) at (1,1){};
\node[draw,circle,inner sep=2,fill=black] (v5) at (-1/3,1/3){};
\node[draw,circle,inner sep=2,fill=black] (v6) at (1/3,1/3){};

%graph edges
\path[draw,thick] (v1)--(v2);
\path[draw,thick] (v1)--(v3);
\path[draw,thick] (v2)--(v3);
\path[draw,thick] (v3)--(v4);
\path[draw,thick] (v3)--(v5);
\path[draw,thick] (v5)--(v6);
\path[draw,thick] (v6)--(v4);
	\end{scope}
	
	\begin{scope}[shift={(2,0)}]
	
	\node[draw,circle,inner sep=2,fill=black] (v2) at (-1,-1){};
	\node[draw,circle,inner sep=2,fill=black] (v3) at (-1,1/3){};
	\node[draw,circle,inner sep=2,fill=black] (v4) at (1,-1){};
	\node[draw,circle,inner sep=2,fill=black] (v5) at (1,1){};
	\node[draw,circle,inner sep=2,fill=black] (v6) at (3,0){};

%graph edges
\path[draw,thick] (v2)--(v3);
\path[draw,thick] (v2)--(v4);
\path[draw,thick] (v4)--(v5);
\path[draw,thick] (v4)--(v6);
	\end{scope}
	
	\end{scope}
	\end{tikzpicture}
\caption{The painting $\cut(\mathcal{P},f)$ and the canonical graph $G_{\cut(\mathcal{P},f)}$.}\label{f:cut}
\end{figure}

\begin{lemma}
\label{lem:cutpainting}
Let $\mathcal P$ be a painting, and let $f$ be a \cutline \ in $F_{\mathcal P}$ inducing a non-trivial walk, then $\cut (\mathcal P, f)$ is a painting.
\end{lemma}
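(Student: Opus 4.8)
The plan is to verify that the family $\cut(\mathcal P, f)$ satisfies the combinatorial characterisation of a painting stated just after Definition~\ref{def:painting}: that it is a family of closed disks, any two of which meet either in the empty set or in a disjoint union of arcs lying in the boundary of both, and no three of which share a point. These disks sit inside the orientable surface $\cut(F_{\mathcal P},f)$, and throughout I would exploit the pasting map $\phi\colon \cut(F_{\mathcal P},f)\to F_{\mathcal P}$ from Definition~\ref{defn:pasting}, which is a homeomorphism off the two copies $A_1,A_2$ of $f$ and is at most two-to-one everywhere.

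First I would show that every connected component of $\cut(D,f)$, for $D\in\mathcal P$, is a closed disk. The key observation is that $f\cap D$ is a finite union of simple arcs $\gamma_1,\dots,\gamma_m$, pairwise disjoint except possibly at the common point $f(0)=f(1)$, namely the restrictions of the reparametrisation $a$ to the intervals $[\tfrac{i-1}n,\tfrac in]$ with $D_i=D$. Each $\gamma_j$ has its interior in $\Int D$ and its endpoints either on $\partial D$ (the transition points $a(\tfrac in)$, interior to intervals of $D_i\cap D_{i+1}$) or at a global endpoint $f(0),f(1)$, which may lie in $\Int D$. Since cutting $F_{\mathcal P}$ along $f$ and then restricting to $\phi^{-1}(D)$ amounts to cutting the disk $D$ along the disjoint family $\gamma_1,\dots,\gamma_m$, and disjoint cuts may be performed one at a time, it suffices to understand a single cut of a disk. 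A chord, with both endpoints on $\partial D$, must disconnect the disk: by Lemma~\ref{l:x=yorC=C'} a connected outcome would have genus $-1$, so we obtain two components, each of genus $0$ with a single boundary circle, i.e.\ two disks. A slit, with one endpoint in $\Int D$, leaves the disk connected of genus $0$ with a single boundary circle by Lemma~\ref{l:xneqyandCneqC'}, hence again a disk. (The closed-curve case $f(0)=f(1)\in\Int D$ fuses the first and last arcs into a single chord and is handled identically.) Iterating, every component of $\cut(D,f)$ is a closed disk.

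Next I would read off the intersection pattern through $\phi$. For pieces $P\in\mathcal C(D_a)$ and $Q\in\mathcal C(D_b)$ there are three cases. If $D_a=D_b$, then $P$ and $Q$ are distinct components of one $\cut(D,f)$; since each cut separates its two sides, distinct components are disjoint, so $P\cap Q=\emptyset$. If $D_a\neq D_b$ are non-adjacent in $G_{\mathcal P}$, then $P\cap Q\subseteq\phi^{-1}(D_a\cap D_b)=\emptyset$. If $D_a\neq D_b$ are adjacent, then $D_a\cap D_b$ is a disjoint union of arcs in $\partial D_a\cap\partial D_b$; away from the cut $\phi$ is a homeomorphism, and $f$ meets $D_a\cap D_b$ only in the finitely many transition points, each interior to one of these arcs. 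Hence $\phi^{-1}(D_a\cap D_b)$ is again a disjoint union of arcs, each original arc being split at the transition points into finitely many pieces, all lying in the relevant boundaries; intersecting with $P$ and $Q$ shows that $P\cap Q$ is a disjoint union of arcs contained in $\partial P\cap\partial Q$, as required.

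Finally, the triple-intersection condition follows cleanly: given any point $p\in\cut(F_{\mathcal P},f)$, its image $\phi(p)$ lies in at most two disks of $\mathcal P$ by the painting property of $\mathcal P$, and for each such disk $D$ the point $p$ lies in at most one component of $\cut(D,f)$, since the components are pairwise disjoint. Thus $p$ belongs to at most two elements of $\cut(\mathcal P,f)$, so no three of them meet, and combining the three steps verifies the characterisation. I expect the main obstacle to be Step~1: justifying that restricting the cut of $F_{\mathcal P}$ to a single disk coincides with cutting that disk along the disjoint family $f\cap D$ (legitimising the one-cut-at-a-time reduction), and correctly treating the arcs incident to the global endpoints $f(0),f(1)$ together with the closed-curve case. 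Once a single cut of a disk is understood via Lemmas~\ref{l:x=yorC=C'} and~\ref{l:xneqyandCneqC'}, the remaining conditions are comparatively routine consequences of the painting axioms for $\mathcal P$ and the structure of $\phi$.
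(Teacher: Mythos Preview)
Your proposal is correct and follows essentially the same approach as the paper: both verify the painting axioms by showing that each $\mathcal C(D)$ consists of disjoint closed disks, that pieces from the same $\mathcal C(D)$ are disjoint while pieces from adjacent disks meet in finitely many boundary arcs (since $f$ hits $D\cap D'$ only in finitely many interior points of the intervals), and that the triple-intersection condition is inherited from $\mathcal P$. The main difference is that you spell out Step~1 carefully via Lemmas~\ref{l:x=yorC=C'} and~\ref{l:xneqyandCneqC'} and an iterated single-cut argument, whereas the paper simply asserts that cutting a disk along finitely many arcs yields finitely many disjoint closed disks; your extra care there is warranted but does not constitute a different method.
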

\begin{proof}
Since $f$ induces a non-trivial walk, its image intersects any $D \in \mathcal P$ in a finite number (potentially 0) of arcs. This implies that $\mathcal C(D)$ consists of finitely many disjoint closed disks, which are contained in the surface $\cut( F_{\mathcal P}, f)$.

For $i \in \{1,2,3\}$, let $E_i \in \cut(\mathcal P,f)$, and let $D_i \in \mathcal P$ such that $E_i \in \mathcal C(D_i)$. 
If $D_1$, $D_2$, and $D_3$ are distinct, then $E_1 \cap E_2 \cap E_3 = \emptyset$ due to $\mathcal P$ being a painting.
If $D_i = D_j$ for some $i\neq j$, then $E_1 \cap E_2 \cap E_3 = \emptyset$ since the elements of $\mathcal C(D_i)$ are disjoint. 

Next assume that $E_1 \cap E_2 \neq \emptyset$. In this case $D_1 \neq D_2$, and  $D_1 \cap D_2 \neq \emptyset$. Recall that the image of $f$ intersects $D_1 \cap D_2$ in finitely many points, all of which are interior to some interval in $D_1 \cap D_2$. Consequently  $\cut (D_1 \cap D_2, f)$ consists of finitely many disjoint arcs. Each of these arcs is connected and thus either fully contained in both $E_1$ and $E_2$, or disjoint from at least one of them. In particular $E_1 \cap E_2$ consists of a collection of disjoint arcs.

Since $E_1$, $E_2$, and $E_3$ were arbitrary, this shows that $\cut(\mathcal P, f)$ is a painting.
\end{proof}

\begin{lemma}
\label{lem:cutpainting-uniquedisk}
Let $\mathcal P$ be a painting, and let $f$ be a \cutline \ in $F_{\mathcal P}$ inducing a non-trivial walk. Let $z\in F_{\cut(\mathcal P,f)}$, and let $\phi\colon F_{\cut(\mathcal P,f)} \to F_{\mathcal P}$ denote the pasting map\footnote{c.f.\ Definition \ref{defn:pasting}, and note that  $\cut(F_{\mathcal P},f) = F_{\cut(\mathcal P,f)}$}.
If $\phi(z)$ is contained in a unique disk $D \in \mathcal P$, then $z$ is contained in a unique disk $E \in \cut(\mathcal P,f)$.
\end{lemma}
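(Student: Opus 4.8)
The plan is to reduce the statement to two elementary facts: the behaviour of the pasting map on individual disks, and the disjointness of connected components. I would first dispose of existence. Since $z \in F_{\cut(\mathcal P,f)} = \bigcup_{E \in \cut(\mathcal P,f)} E$, the point $z$ lies in \emph{at least} one disk of $\cut(\mathcal P,f)$, so the entire content of the lemma is the \emph{uniqueness} of such a disk. Everything therefore hinges on showing that every disk of $\cut(\mathcal P,f)$ that contains $z$ must in fact be a connected component of $\cut(D,f)$ for the one specified $D$.

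The key step is to transfer the ``unique disk'' hypothesis across the pasting map $\phi$. Suppose $z \in E$ for some $E \in \cut(\mathcal P,f)$. By the definition $\cut(\mathcal P,f) = \bigcup_{D' \in \mathcal P} \mathcal C(D')$, the disk $E$ is a connected component of $\cut(D',f)$ for some $D' \in \mathcal P$. Recalling from Definition~\ref{defn:pasting} that $\cut(D',f) = \phi^{-1}(D')$, I would conclude $E \subseteq \phi^{-1}(D')$, and hence $\phi(z) \in D'$. Since by hypothesis $\phi(z)$ is contained in the \emph{unique} disk $D$, this forces $D' = D$. Thus any disk of $\cut(\mathcal P,f)$ containing $z$ is a connected component of $\cut(D,f)$, i.e.\ an element of $\mathcal C(D)$.

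To finish I would invoke that the connected components of a space partition it: the elements of $\mathcal C(D)$ are by definition the connected components of $\cut(D,f)$, so they are pairwise disjoint and at most one of them can contain $z$. Combined with the existence observation, $z$ lies in exactly one disk $E \in \cut(\mathcal P,f)$. I do not anticipate a genuine obstacle; the only points requiring care are purely bookkeeping. First, one must lean on the footnoted identification $\cut(F_{\mathcal P},f) = F_{\cut(\mathcal P,f)}$ so that $\phi$ is defined on all of $F_{\cut(\mathcal P,f)}$ and $\cut(D,f) = \phi^{-1}(D)$ is meaningful for each individual disk $D$. Second, it is worth noting that the disjointness of components applies even to points of $z$ lying on a cut arc $A_{k+1}$ or $A'_{k+1}$, since every point (boundary points included) lies in exactly one connected component; and the hypothesis that $\phi(z)$ lies in a unique disk automatically excludes the degenerate situation where $\phi(z)$ is a junction of the induced walk lying in $D_i \cap D_{i+1}$, so no separate analysis of such points is needed.
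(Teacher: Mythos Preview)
Your argument is correct and matches the paper's proof in substance: both hinge on the two facts that $E \in \mathcal C(D')$ implies $\phi(E) \subseteq D'$, and that the elements of $\mathcal C(D)$ for a fixed $D$ are pairwise disjoint. The only cosmetic difference is that the paper phrases the uniqueness step as a contrapositive (if $z \in E \cap E'$ with $E \neq E'$, then $\phi(z)$ lies in two distinct disks of $\mathcal P$), whereas you argue directly; your additional remarks about existence and about boundary points are harmless but not needed.
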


\begin{proof}
Assume that there are distinct elements $E,E' \in \cut(\mathcal P,f)$ such that $z \in E \cap E'$. Since different elements of $\mathcal C(D)$ are disjoint for any $D$, we know that there are different elements $D,D' \in \mathcal P$ such that $E\in \mathcal C(D)$ and $E' \in \mathcal C(D')$. By definition of $\phi$ we have $\phi(E \cap E') \subseteq D \cap D'$, thus $\phi(z) \in D\cap D'$ which proves the lemma.
\end{proof}

\begin{lemma}
\label{lem:cutpainting-homomorphism}
Let $\mathcal P$ be a painting, and let $f$ be a \cutline \ in $F_{\mathcal P}$ inducing a non-trivial walk $W$. Then there is a painting homomorphism $\phi\colon \cut (\mathcal P, f) \to \mathcal P$ with the following properties: 
\begin{itemize}
    \item for $D, D' \in \mathcal P$ with $D \cap D' \neq \emptyset$ there are $E \in \phi^{-1}(D)$, and $E' \in \phi^{-1}(D')$ with $E \cap E' \neq \emptyset$, and 
    \item the restriction of $\phi$ to $\phi^{-1}(\mathcal P - W)$ is an isomorphism onto $\mathcal P - W$.
\end{itemize}
\end{lemma}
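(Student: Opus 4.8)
The plan is to define $\phi$ as the map on disks induced by the pasting map. Writing $\pi\colon F_{\cut(\mathcal P,f)}\to F_{\mathcal P}$ for the pasting map (so that $\cut(F_{\mathcal P},f)=F_{\cut(\mathcal P,f)}$, as recorded in the footnote to Lemma~\ref{lem:cutpainting-uniquedisk}), every $E\in\cut(\mathcal P,f)$ is by construction a connected component of $\cut(D,f)=\pi^{-1}(D)$ for some $D\in\mathcal P$, and I would set $\phi(E):=D$. The first thing to check is that this is well defined. Were $E$ a component of both $\pi^{-1}(D)$ and $\pi^{-1}(D')$ with $D\neq D'$, then $E\subseteq\pi^{-1}(D\cap D')$; but $D\cap D'$ is a union of arcs, hence one-dimensional, and $\pi$ is injective away from the cut, so $\pi^{-1}(D\cap D')$ has empty interior. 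This contradicts $E$ being a closed disk, so the two-dimensional piece $E$ singles out a unique $D$.

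Next I would verify that $\phi$ is a painting homomorphism. Suppose $E_1,E_2\in\cut(\mathcal P,f)$ touch, and write $D_i=\phi(E_i)$ so that $E_i\in\mathcal C(D_i)$. The adjacency analysis already carried out in the proof of Lemma~\ref{lem:cutpainting} shows that $E_1\cap E_2\neq\emptyset$ forces $D_1\neq D_2$ and $D_1\cap D_2\neq\emptyset$. Hence $\phi(E_1)$ and $\phi(E_2)$ touch, as required.

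For the first bullet I would argue that cutting never completely severs an adjacency. Given $D,D'\in\mathcal P$ with $D\cap D'\neq\emptyset$, the painting axioms make $D\cap D'$ a finite union of arcs contained in $\partial D\cap\partial D'$, and since $f$ induces a walk it meets $D\cap D'$ in only finitely many points, each interior to one of these arcs. Consequently $\pi^{-1}(D\cap D')$ still contains a nondegenerate sub-arc $\beta$. Taking $E\in\mathcal C(D)$ and $E'\in\mathcal C(D')$ to be the (unique) components containing the connected set $\beta$ gives $\beta\subseteq E\cap E'$, so $E\in\phi^{-1}(D)$ and $E'\in\phi^{-1}(D')$ touch.

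Finally, for the isomorphism claim I would use that $f^{-1}(D)=\emptyset$ for every $D\in\mathcal P-W$, so that such disks are left untouched by the cut: $\mathcal C(D)=\{D\}$ and $\pi$ is the identity on them. This immediately makes $\phi$ a bijection from $\phi^{-1}(\mathcal P-W)$ onto $\mathcal P-W$, and the homomorphism property above supplies one direction of adjacency preservation. For the converse I note that if $D_1,D_2\in\mathcal P-W$ touch then $f$ avoids both, hence avoids every arc of $D_1\cap D_2$, so the identification along $D_1\cap D_2$ survives the cut intact and the corresponding components $E_1=D_1$, $E_2=D_2$ still touch; thus $\phi$ is an \emph{isomorphism} on $\mathcal P-W$. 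I expect the care in the argument to sit in this last converse together with the first bullet: both rest on the single geometric observation that $f$ meets each intersection $D\cap D'$ only in finitely many interior points of arcs, so that a sub-arc of every shared arc survives the cut. Once that is in hand, the remaining adjacency bookkeeping is routine.
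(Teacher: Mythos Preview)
Your proposal is correct and follows essentially the same approach as the paper: define $\phi$ by sending each element of $\mathcal C(D)$ to $D$, verify the homomorphism property via the adjacency analysis from Lemma~\ref{lem:cutpainting}, establish the first bullet by picking a point (or in your case a sub-arc) in $D\cap D'$ and lifting it, and deduce the second bullet from $\mathcal C(D)=\{D\}$ for $D\notin W$. You are more explicit than the paper in two places---the well-definedness of $\phi$ and the inverse-homomorphism direction of the isomorphism claim---but these are elaborations of the same argument rather than a different route.
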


\begin{proof}
Consider the map $\phi:\cut(\mathcal P, f) \to \mathcal P$, mapping all of $\mathcal C(D)$ to $D$. Clearly, if $E $ and $E'$ touch then $\phi(E)$ and $\phi(E')$ are distinct and $\phi(E) \cap \phi(E') \neq \emptyset$, whence $\phi$ is a homomorphism as desired. 
If $D \cap D' \neq \emptyset$ for $D,D' \in \mathcal P$, then for any point $x \in D \cap D'$ there are $E \in \mathcal C(D)$, and $E' \in \mathcal C(D')$ containing $x$, thus showing the first claimed property of $\phi$.
For $D \in \mathcal P \setminus W$, we know that $D$ does not meet the image of $f$, and thus $\mathcal C(D) = \{D\}$. Thus the restriction of $\phi$ on $\phi^{-1}(G_{\mathcal P}-W)$ is the identity map, and in particular an isomorphism. 
\end{proof}

\begin{theorem}\label{t:strat}
Let $\mathcal P$ be a painting, then $c(G_{\mathcal P}) \leq v(F_{\mathcal P})+1$.
\end{theorem}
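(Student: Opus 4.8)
The plan is to prove $c(G_{\mathcal P}) \leq v(F_{\mathcal P})+1$ by transferring a winning strategy for Marker in the topological game on $S = F_{\mathcal P}$ into a strategy for $v(F_{\mathcal P})+1$ cops on $G_{\mathcal P}$. The intuition is that Marker's cutlines correspond to shortest paths in the graph that cops can guard (via Lemma~\ref{lem:guardshortestpath}), the active boundary arcs correspond to the paths currently being guarded, and so the score $v(S)$ bounds the number of cops needed to maintain the guarding, with one extra cop to actually choose and create each new cutline by catching the robber's walk.

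\textbf{Setting up the induction.} I would run the cop strategy in parallel with a simulated play of the Marker--Cutter game. At each stage we have a painting $\mathcal P_k$ (obtained from $\mathcal P$ by successive cuts), with $F_{\mathcal P_k} = S_k$ the current surface in Marker's game, and the robber confined to a subgraph corresponding to one component chosen by Cutter. Crucially, \emph{the robber plays the role of Cutter}: when the cops wish to cut, the robber's future movement determines which region he is trapped in, and the cutline is realised as a guarded shortest path in $G_{\mathcal P_k}$. The invariant to maintain is that there is one cop guarding each active boundary arc, so the number of guarding cops equals $a_k \leq v(S)$, and the robber is confined to the region $R$ of $G_{\mathcal P_k}$ whose disks lie in $S_k$.

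\textbf{The inductive step.} Given Marker's chosen points $x,y \in X_k \cup \Int{S_k}$, these correspond to vertices (or boundary arcs already guarded) in $G_{\mathcal P_k}$. One free cop catches a shortest path between the corresponding vertices in the robber's current region; by Lemma~\ref{lem:guardshortestpath} this path is guardable, and the robber is thereby confined to one side. This realises Cutter's move: the path induces a cutline $f_{k+1}$ in $F_{\mathcal P_k}$ (inducing a non-trivial walk $W$), and the robber's confinement selects the component $S_{k+1}$. Now I invoke Lemma~\ref{lem:cutpainting-homomorphism}: the painting homomorphism $\phi\colon \cut(\mathcal P_k, f_{k+1}) \to \mathcal P_k$ gives a graph homomorphism, is an isomorphism away from $W$, and every edge incident to the robber's region is realised. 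This is precisely the hypothesis of Lemma~\ref{lem:relativeguard}, which lets me pull back the guarding of the new region to guarding in $G_{\mathcal P_{k+1}}$, so the guarding cops in the cut painting can be simulated by cops in $G_{\mathcal P_k}$ that follow the corresponding strategy. The count of active indices controls how many cops are simultaneously guarding, and Marker's strategy keeps this $\leq v(S)$ throughout.

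\textbf{Termination and the final cop.} Since Marker's strategy bounds the score, the sequence of surfaces must stabilise: after finitely many cuts the genus of $S_k$ cannot decrease further and the active boundary structure becomes controlled, so the robber is confined to a region that is effectively planar (genus $0$). The key consequence of a bounded score is that Marker can force the game into a position where the robber's region is a disk, at which point Lemma~\ref{lem:cutpainting-uniquedisk} guarantees each relevant vertex lies in a unique disk and the remaining region is a tree-like or planar graph in which a single additional cop suffices. Thus $v(S)$ cops maintain the boundary guards while one further cop either advances the cutting process or finishes off the robber, giving the bound $v(F_{\mathcal P})+1$. The main obstacle I anticipate is the bookkeeping in the inductive step: correctly identifying that the robber functions as Cutter, that Marker's points translate to guardable shortest paths, and that Lemma~\ref{lem:relativeguard} applies at each stage with its hypotheses genuinely met (in particular that every edge incident to the robber's region is an image under $\phi$). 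Verifying that the number of cops used never exceeds the number of active indices — rather than the total number of cuts made — is the crux, and this is exactly where the definition of \emph{active} boundary arcs, which forgets arcs no longer on the boundary, does the essential work.
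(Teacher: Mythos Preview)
Your high-level architecture is right: simulate the Marker--Cutter game alongside the cops and robber game, with the robber's position determining Cutter's choice of component, and one cop per active index guarding the corresponding boundary arc. But two genuine gaps would prevent this from going through as written.

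\textbf{Termination.} Your termination argument is wrong. A bound on the score says nothing about the genus eventually reaching zero or the robber's region becoming a disk; Cutter can make the genus constant forever while the score stays bounded. The paper's termination is purely combinatorial: in each stage the walk $W$ is required to contain at least one \emph{uncovered} disk (one not touched by any active boundary arc), so the set $\mathcal R_k$ of uncovered disks---the robber's territory---strictly shrinks. Since $\mathcal R_0 = \mathcal P$ is finite, the robber is caught in at most $|\mathcal P|$ stages. You never arrange for the robber's territory to shrink, and Lemma~\ref{lem:cutpainting-uniquedisk} plays no role in termination; it is used only to maintain the technical invariant that points of $X_k$ lie in unique disks, so that Marker's points determine well-defined vertices.

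\textbf{Guardability of the walk.} Because $W$ must pass through an uncovered disk, it is in general \emph{not} a shortest path between $D_x$ and $D_y$, so Lemma~\ref{lem:guardshortestpath} does not apply directly. The paper chooses $W$ to be a shortest walk from $D_x$ to $D_y$ subject to containing an uncovered vertex, then builds an auxiliary graph in which $W$ unfolds to a genuine shortest path $P$, and applies Lemma~\ref{lem:relativeguard} twice (once for the auxiliary graph, once for the painting homomorphism $\psi_k$) to transfer guardability back to $G_{\mathcal P}$. Relatedly, the role of the extra cop is not to finish off the robber in a planar endgame: she is the cop who, in each stage, moves into position to guard the new walk $W$ while the (at most $v(S)$) cops on active indices maintain their guards; once an index becomes inactive its cop is freed for reuse.
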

\begin{proof}
We play the cops and robber game on $G_{\mathcal P}$ and the topological Marker-Cutter game on $F_{\mathcal P}$ in parallel. Game play proceeds in \emph{stages}, where each stage consists of a turn of Marker, followed by finitely many turns in the cops and robber game, and finally a turn of cutter. We will assume that Marker plays optimally. The cops' strategy depends on Marker's move, and Cutter's move depends on the outcome of the cops and robber game in the current stage.

First note that if $\mathcal Q$ is a painting and $f$ is a \cutline \ in $F_{\mathcal Q}$ inducing a non-trivial walk, then $\cut(F_{\mathcal Q}, f) = F_{\cut(\mathcal Q,f)}$. Note that connected components of $\cut(F_\mathcal Q,f)$ correspond to components of the graph $G_{\cut(\mathcal Q,f)}$. We will ensure that in every stage of the game the surface $S_k$ is $F_{\mathcal{Q}}$ for some graph painting $\mathcal{Q}$ and the \cutline \ $f_{k+1}$ induces a non-trivial walk in the graph $G_{\mathcal Q}$.

In particular, we can throughout the game assume that $S_k = F_{\mathcal P_k}$, where $\mathcal P_0 = \mathcal P$ and $\mathcal P_{k+1}$ is the vertex set of a component of $G_{\cut(\mathcal P_k, f_{k+1})}$. Lemma \ref{lem:cutpainting} gives a painting homomorphism $\phi_k\colon \mathcal P_{k+1} \to \mathcal P_k$ which is an isomorphism onto its image when restricted to elements that have empty intersection with $A_{k+1} = f_{k+1}([0,1])$. We define painting homomorphisms $\psi_k\colon \mathcal P_k \to \mathcal P$ by $\psi_0 = \id$, and $\psi_{k+1} = \psi_k \circ \phi_k$. It is easy to see that $D \in \mathcal P_k$ is contained in the image of $\phi_k$ if and only if $D \cap F_{\mathcal P_{k+1}} \neq \emptyset$, and by induction, $D \in \mathcal P$ is in the image of $\psi_k$ if and only if $D \cap F_{\mathcal P_{k}} \neq \emptyset$.

We say that $D \in \mathcal P_k$ is \emph{covered} by an active index $i$, if $\partial D \cap (A_i \cup A_i') \neq \emptyset$, and \emph{uncovered} if it is not covered by any active index. Let $\tilde{\mathcal C}_k(i)$ be the set of elements covered by $i$, and let $\tilde {\mathcal R}_k$ be the set of uncovered elements. Let $\tilde G_k(i)$ be the subgraph of $G_{\mathcal P_k}$ induced by $\tilde{\mathcal C}_k(i) \cup \tilde {\mathcal R}_k$. Let $\mathcal C_k(i) = \psi_k(\tilde{\mathcal C}_k(i))$, let $\mathcal R_k = \psi_k (\tilde {\mathcal R}_k)$, and let $G_k(i) = \psi_k (\tilde G_k(i))$, where $\psi_k$ is interpreted as a graph homomorphism $G_{\mathcal P_k} \to G_{\mathcal P}$. Note that the vertex set of $G_k(i)$ is $\mathcal C_k(i) \cup \mathcal R_k$, but it is not the induced subgraph on this set, since $\psi_k^{-1}(\mathcal C_k(i))$ might be a strict superset of $\tilde{\mathcal C}_k(i)$, and thus it is possible that not every edge whose endpoints are in $\mathcal C_k(i) \cup \mathcal R_k$ has a preimage in $\tilde G_k(i)$.

Observe that for any $R \in \mathcal R_k$, the disk $\psi_k^{-1}(R)$ is fully contained in $F_{\mathcal P_k}$. In particular, if $D$ is a neighbour of $R$ in $G_{\mathcal P}$, then part of $D$ (namely $\partial D \cap \partial R$) is also contained in $F_{\mathcal P_k}$, and there is $\tilde D \in \psi_k^{-1}(D)$ such that $\partial D \cap \partial R \subset \tilde D$. Either $\tilde D$ is uncovered, in which case $\tilde D \in \mathcal R_k$, or $\tilde D$ is covered by some active index $i$. Hence, any edge incident to $\mathcal R_k$ is contained in $G_{k}(i)$ under $\psi_k$ for some active index $i$. 

If an element $D$ of $\mathcal P_{k+1}$ is covered by an active index $i < k+1$ in stage $(k+1)$, then $\phi_k(D)$ is covered by $i$ in stage $k$. Consequently, $\mathcal C_{k+1}(i) \subseteq \mathcal C_{k}(i)$ and $G_{k+1}(i)$ is a subgraph of $G_k(i)$.
%Since also the robber territory gets smaller.

Conversely note that for $D \in \mathcal P_{k+1}$, if $\phi_k(D)$ is covered by $i$ in stage $k$, and $D$ does not have $A_i$ or $A_i'$ in its boundary, then $D$ is a proper subset of $\cut(\phi_k(D),f_{k+1})$ and thus must have $A_{k+1}$ or $A_{k+1}'$ in its boundary. Hence if $\phi_k(D)$ is covered by $i$ in stage $k$, then $D$ is covered by at least one of the indices $i$ and $(k+1)$ in stage $(k+1)$, possibly by both. 

Hence, $\phi_k$ maps uncovered vertices to uncovered vertices. Induction combined with the fact, from Lemma \ref{lem:cutpainting-homomorphism}, that the restriction of $\phi_k$ to elements not covered by index $(k+1)$ is an isomorphism onto its image shows that the restriction of $\psi_k$ to the uncovered vertices is an isomorphism onto $\mathcal R_k$. This also shows that $\mathcal R_k$ is disjoint from $\mathcal C_k(i)$ for any active index $i$.

We will inductively make sure that after every stage in which the robber is not caught the following properties are satisfied.
\begin{enumerate}[label=(\arabic*)]
    \item \label{itm:smallerterritory} $\mathcal R_k$ is a strict subset of $\mathcal R_{k-1}$,
    \item \label{itm:robberinterritory} the robber is in $\mathcal R_k$,
    \item \label{itm:coveredguarded} for every active index $i$, there is a cop $c_i$ guarding $\mathcal C_k(i)$ in some subgraph of $G_{\mathcal P}$ containing $G_k(i)$.
\end{enumerate}
Furthermore, we inductively guarantee the following technical condition which will make sure that in the topological game $f_k$ can be chosen to induce a walk:
\begin{enumerate}[resume,label=(\arabic*)]
    \item \label{itm:Xunique} any element of $X_k$ is contained in a unique disk in $\mathcal P_k$.
\end{enumerate}

Before the first stage, with $\mathcal R_0 = \mathcal P$, $X_0 = \emptyset$ and no active indices \ref{itm:smallerterritory}-- \ref{itm:Xunique} are trivial. Since $\mathcal R_0$ is finite, the induction on \ref{itm:smallerterritory} and \ref{itm:robberinterritory} cannot continue indefinitely, in particular the robber must be caught in some stage $n \leq |\mathcal P|$. Furthermore, we will see that we can complete each stage using a single cop in addition to the cops $c_i$ from \ref{itm:coveredguarded}, thus proving the theorem.

We now describe the strategy in stage $k$. Let $x,y$ be the two points chosen by Marker, and let $D_x$ and $D_y$ be the elements of $\mathcal P_k$ containing $x$ and $y$ respectively. We can assume that $D_x$ and $D_y$ are uniquely determined. If $x$ and $y$ are in $X_k$ this follows from \ref{itm:Xunique}, if they are interior points of $F_{\mathcal P _k}$, then we can apply a small perturbation without changing the effect of Marker's move. In $G_{\mathcal P_k}$, pick a non-trivial walk $W$ from $D_x$ to $D_y$ which contains some uncovered element and is shortest possible subject to these restrictions. 

Let $H$ be the subgraph of $G_{\mathcal P_k}$ induced by $W \cup \tilde{\mathcal R}_k$. We claim that $\psi_k(W)$ is guardable relative to $\mathcal R_k$ in $\psi_k(H)$---we remark, that a subgraph of this image will correspond to $G_{k+1}(k+1)$. By Lemma \ref{lem:relativeguard} it is sufficient to show that $W$ is guardable relative to $\tilde{\mathcal R}_k$ in $H$. If $|V(W)|=2$, then it is clear.

Otherwise consider an auxiliary graph $G$ which is constructed as follows: start with a copy $R$ of the graph induced by $\tilde{\mathcal R}_k$, and a path $P$ of the same length as $W$, add edges from the $r$-th vertex of $P$ to the vertices of $R$ corresponding to neighbours of the $r$-th vertex of $W$ in $\tilde{\mathcal R}_k$. Finally, if the $r$-th vertex of $W$ is in $\tilde{\mathcal R}_k$, then identify the $r$-th vertex of $P$ with the corresponding vertex of $R$, and denote the resulting graph by $G$. 

Let $\varphi \colon G \to H$ be the map sending the $r$-th vertex of $P$ to the $r$-th vertex of $W$, and mapping every vertex in $R$ to the corresponding vertex in $\tilde{\mathcal R}_k$. Clearly, $f$ is a homomorphism and its restriction to $R = \varphi^{-1}(\tilde{\mathcal R}_k)$ is an isomorphism onto its image. By construction, every edge incident to $\tilde{\mathcal R}_k$ is the image of some edge of $G$ under $\varphi$. Hence, again by Lemma \ref{lem:relativeguard}, we only need to show that $P$ is guardable relative to $R$ in $G$.
If $P$ is not a shortest path between its endpoints in $G$, then let $P'$ be a shorter path. Via $\varphi$, the path $P'$ induces a walk $W'$ from $D_x$ to $D_y$ of the same length as $P'$. Since $P - \varphi^{-1}(\tilde{\mathcal R}_k)$ is not connected, $W'$ would contain an uncovered vertex thus contradicting the minimality of $W$ unless $W'$ is a trivial walk. However in this case $D_x = D_y$ is uncovered and hence $|V(W)| =2$. Hence $P$ is a shortest path between its endpoints and thus by Lemma \ref{lem:guardshortestpath} it is guardable (and consequently guardable relative to $R$) in $G$.

The cops' strategy in stage $n$ is now as follows. For every active index $i$, the cop $c_i$ keeps guarding $\mathcal C_k(i)$ in some graph containing $G_k(i)$. Since any edge connecting $\mathcal R_k$ to its complement is contained in some $G_k(i)$ this makes sure that robber cannot leave $\mathcal R_k$ without being caught. An additional cop $c_{k+1}$ plays such that after finitely many steps she guards $\psi_k(W)$ in $\psi_k(H)$.

For Cutters move, note that in the above argument we can choose the walk $W$ as follows. Let $D$ be an uncovered element in $W$ and let $P_x$ and $P_y$ be shortest paths from $D$ to $D_x$ and $D_y$ respectively, chosen in such a way that they have the largest possible number of common vertices. The walk $W$ obtained by following $P_x$ from $D_x$ to $D$ and then $P_y$ from $D$ to $D_y$ induces a tree with at most three leaves ($D_x$, $D_y$ and possibly $D$), and since $D_x$ and $D_y$ are the only disks in $\mathcal P_k$ containing $x$ and $y$ respectively, it is easy to see that there is a \arc{x}{y} $f_{k+1}$ in $F_{\mathcal P_k}$ which induces $W$. Cutter plays this arc. If there is more than one component in $\cut(F_{\mathcal P_k},f_{k+1})$, then let $R \in \mathcal R_k$ be the position of the robber after finishing the cops-and-robber moves of this stage. If $A_{k+1}$ passed through $\psi_k^{-1}(R)$, then the robber would have been caught. Consequently, $\psi_k^{-1}(R)$ completely lies in one component of $\cut(F_{\mathcal P_k},f_{k+1})$, and this is the component that Cutter chooses to become the active component. 

Any cop $c_i$ for which $i$ has become inactive due to this is released from her guarding duty and consequently can be reassigned to guard a different graph in subsequent rounds, thus bounding the total number of cops used throughout the strategy by $v(F_{\mathcal P})+1$.

It remains to check that our inductive assumptions \ref{itm:smallerterritory}--\ref{itm:Xunique} hold for the next stage. 

Since $\phi_k$ maps uncovered vertices (after stage $k+1$) to uncovered vertices (after stage $k$) we know that $\mathcal R_{k+1} \subset \mathcal R_k$. It is a strict subset because $W$ contained at least one uncovered (after stage $k$) element $D$, which is now covered by index $(k+1)$. Hence $\psi_k(D)$ is contained in $\mathcal R_k$, but not in $\mathcal R_{k+1}$, and we have proved \ref{itm:smallerterritory}. 

If the robber tries to leave $\mathcal R_k$ at any point during stage $k+1$, he would have been caught, since any edge connecting $\mathcal R_k$ with its complement is contained in some $G_k(i)$. He also cannot be at any vertex of $W$ at the end of stage $k+1$, or he would have been caught by $c_{k+1}$. Hence he must be at a vertex in some component of $\mathcal R_k - W$, and the choice of the active component made by Cutter ensures that this is $\mathcal R_{k+1}$, thus proving \ref{itm:robberinterritory}. 

For \ref{itm:coveredguarded}, first note that for any active index $i < k+1$ we have $\mathcal C_{k+1} (i) \subseteq \mathcal C_k(i)$ and $G_{k+1}(i) \subseteq G_k(i)$, so cop $c_i$ can keep playing the same strategy. cop $c_{k+1}$ guards $\psi_k(W)$ in $\psi_k(H)$. Note that a vertex $D \in \mathcal P_{k+1}$ is covered by $(k+1)$ if and only if $\phi_k(D)$ is contained in $W$, whence $\psi_{k+1}(\tilde{\mathcal C}_{k+1}(k+1)) \subseteq \psi_k(W)$, and that $\phi_k$ maps uncovered vertices to uncovered vertices, whence $\psi_{k+1}(\tilde{\mathcal R}_{k+1}) \subseteq \psi_k(\tilde{\mathcal R}_k)$. It follows that $\psi_k(W)$ contains $\mathcal C_{k+1}(k+1)$ and $\psi_k(H)$ contains $G_{k+1}(k+1)$ which completes the proof of \ref{itm:coveredguarded}.

Finally, \ref{itm:Xunique} holds by Lemma \ref{lem:cutpainting-uniquedisk} since any element of $X_k \cup \{x,y\}$ is contained in a unique disk of $\mathcal P_k$.
\end{proof}

\section{The combinatorial Marker-Cutter game}\label{s:CombGame}
Since we only consider surfaces up to homeomorphism, the only information that will inform Cutter and Marker's strategy at each turn will be the combinatorial information contained in the way the active arcs are arranged in the components of the boundary, as well as the number of handles in the active surface.

Using this, we can relate the topological Marker-Cutter game to a yet another game, which we call the \emph{combinatorial Marker-Cutter game}. The benefit being that it will be much simpler to describe the `state' of this game at each step, which will help in describing our strategy for Marker.

\begin{defn}
A set of \emph{boundary cycles} is a pair $(D,\chi)$ of a finite directed graph $D$ each of whose components is a directed cycle (loops are allowed, but not isolated vertices), together with an edge-labelling $\chi \colon E(D) \rightarrow L$. In a standard abuse of notation, we shall often use just $D$ to refer to the set of boundary cycles. A set of boundary cycles in which each label is used at most twice is called \emph{proper}. A label which only appears on a single cycle in $D$ is called \emph{isolated}, and it is called \emph{uniquely appearing} if it only appears on a single edge. The set of {\em active labels} $A_{D,\chi} = \chi(E(D))$ is the set of labels appearing on the edges of $D$, and the {\em value} $v(D,\chi)$ is defined as $|A_{D,\chi}|$.

%Let us say the label of a $\partial$-arc $A_i$ or $A'_i$ in $\mathcal{A}_k$ is $i$. For each component of $\partial S_k$ which contains $i$ $\partial$-arcs from $\mathcal{A}_k$, the orientation of the surface determines a cyclic order on the labels of the $\partial$-arcs from $\mathcal{A}_k$ in this component, which gives in a natural way a cyclic sequence of length $i$. Let $\mathcal{D}$ be the union of the cyclic sequences corresponding to each component of $\partial S_k$, we say that $\mathcal{D}$ is a \emph{representation} of the game state given by the partial play $(A_i, S_i \colon i \in [k])$.

A \emph{game state} in the combinatorial Marker-Cutter game is a triple $\gamma=(D,\chi,g)$ where $(D,\chi)$ is a proper set of boundary cycles, along with a counter $g \in \mathbb{N}_0$. The \emph{value} $v(\gamma)$ of a game state $\gamma$ is just the value $v(D,\chi)$ of its set of boundary cycles.
\end{defn}

We start the combinatorial Marker-Cutter game with $\gamma_0=(D_0,\chi_0,g_0)$ where $D_0$ is the empty digraph, $\chi_0= \emptyset$ and $g_0 \in \mathbb{N}$. In a general turn we have a game state $(D_k,\chi_k,g_k)$ and first Marker, then Cutter makes a move.

\textbf{Marker's move.} Marker chooses two elements $v,w$ from $V(D_k) \, \dot\cup \, \{v_\gamma,w_\gamma\}$, not necessarily distinct. The resulting tuple $(D_k,\chi_k,g_k,v,w)$ is also called a \emph{marked game state}. The vertices $\{v_\gamma,w_\gamma\}$ signify that Marker may also choose up to two `dummy' vertices not in $V(D_k)$ (or a single `dummy' vertex twice). If $v$ or $w$ is a dummy vertex then we add a new vertex to $D_k$ for each dummy vertex chosen (without multiplicity). Let $C,C' \subseteq D_k$ be the components containing $v$ and $w$ respectively, so that both $C$ or $C'$ are either a cycle or a single dummy vertex.

\bigskip
\textbf{Cutter's move.}

\begin{enumerate}[label=]
\item \textbf{Case $\mathbf{C = C'}$:} By splitting the vertex $v$ into a source $v_1$ and a sink $v_2$, and similarly for $w$, the directed cycle $C$ gives rise to two directed paths $P$ from $v_1$ to $w_2$ and $P'$ from $w_1$ to $v_2$. If $v=w$ is a dummy vertex, then we let $v_1=w_2$ and and $v_2=w_1$ and these paths are trivial. Consider two new directed edges $f = \overrightarrow{w_2v_1}$ and $f'=\overrightarrow{v_2w_1}$, and define
$$\hat{C}_1 = \hat{C}_1[C,v,w] := P \cup \{f\} \; \text{ and } \; \hat{C}_2 = \hat{C}_2[C,v,w] := P' \cup \{f'\}.$$
So, if $v=w$ is a dummy vertex, then $\hat{C}_1 = f$ and $\hat{C}_2 = f'$ are loops on $v_1$ and $v_2$ respectively.
%(Josh): I changed the above, but I think its correct now.

Then Cutter makes one of the following choices for the next game state:
\begin{enumerate}
\item \label{movea} $D_{k+1}  = \overline{D} \cup \hat{C}_1 \cup \hat{C}_2$ and $g_{k+1} = g_k-1$ (provided $g_k \geq 1$);
%(Josh) : Shouldn't let Cutter delete bits in type (a) moves
\item \label{moveb} $D_{k+1}'  = \overline{D} \cup \hat{C}_1$ and $g_{k+1} = \overline{g}$; or
\item \label{movec} $D_{k+1}  = \overline{D} \cup \hat{C}_2$ and $g_{k+1} = \overline{g}$,
\end{enumerate}
where $\overline{D}$ is any union of components of $D_k \setminus C$, $0 \leq \overline{g} \leq g_k$, $\chi_{k+1} = \chi_k$ wherever it is defined, and $\chi_{k+1}(f) = \chi_{k+1}(f')= \ell'$ (if they exist) where $\ell'$ is a new label outside the image of $\chi_k$. Note that we do not allow Cutter to make the first choice if $g_k=0$.
\bigskip

\item \textbf{Case $\mathbf{C \neq C'}$:} By splitting the vertex $v$ into a source $v_1$ and a sink $v_2$, and similarly for $w$, the directed cycles $C$ and $C'$ give rise to two directed paths $P$ from $v_1$ to $v_2$ and $P'$ from $w_1$ to $w_2$. If $v$ is a dummy vertex then we take $v_1=v_2$ so that $P$ is a trivial path, and similarly for $w$. Consider two new directed edges $f = \overrightarrow{w_2v_1}$ and $f'=\overrightarrow{v_2w_1}$ and define the \emph{amalgamated cycle} $\hat{C} =\hat{C}((C,v),(C',w))$ as $\hat{C}:= P \cup P' \cup \{f,f'\}$.

In this case, Cutter has no choice, and the next game state is:
\begin{enumerate}[resume]
  \item \label{moved}$D_{k+1}=( D_k\cup \hat{C}) \setminus (C \cup C')$ and  $g_{k+1}=g_k$,
\end{enumerate}
$\chi_{k+1} = \chi_k$ wherever it is defined, and $\chi_{k+1}(f) = \chi_{k+1}(f')= \ell'$ where $\ell'$ is a new label outside the image of $\chi_k$. 
\end{enumerate}

Note that in all cases if $(D_k,\chi_k)$ is proper, then so is $(D_{k+1},\chi_{k+1})$, and hence $(D_k,\chi_k,g_k)$ is a game state. 

By Lemma \ref{l:boundaryprop}, every partial play $(f_i, S_i \colon i \in [k])$ in the topological Marker-Cutter game determines a game state in the combinatorial Marker-Cutter game in the following way:

For each component of $\partial S_k$ which contains $m \geq 1$ $\partial$-arcs from $\mathcal{A}_k$, the orientation of the surface determines a cyclic order on the labels of the $\partial$-arcs from $\mathcal{A}_k$ in this component, which corresponds in a natural way a directed cycle of length $m$, whose edges correspond to $\partial$-arcs. Let $D$ be the digraph given by taking a disjoint union of one such directed cycle for each component of $\partial S_k$ containing at least one $\partial$-arc from $\mathcal{A}_k$. We can then define an edge-labelling $\chi$ on $E(D)$ by giving an edge corresponding to a $\partial$-arc $A_i$ or $A'_i$ in $\mathcal{A}_k$ the label $i$. Finally let $g=g(S_k)$. We say that the game state $(D,\chi,g)$ is a \emph{representation} of the partial play $(f_i, S_i \colon i \in [k])$. Note that if $(D,\chi,g)$ is a representation of $(f_i, S_i \colon i \in [k])$, then the number of active indices $a_k$ in $S_k$ is the value of the game state $v(\gamma)$.

\begin{lemma}\label{l:gameequiv}
Let $S$ be a compact connected orientable surface $S$. If Marker has a strategy in the combinatorial Marker-Cutter game with $g_0 = g(S)$ such that the value of each game state is bounded by $t$, then $v(S) \leq t$.
\end{lemma}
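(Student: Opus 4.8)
The plan is to run the topological and combinatorial games in parallel and let a value-bounding strategy for Marker in the combinatorial game dictate Marker's choices in the topological game, while maintaining throughout the invariant that at every turn $k$ the combinatorial game state $\gamma_k=(D_k,\chi_k,g_k)$ is a \emph{representation} of the topological partial play $(f_i,S_i\colon i\in[k])$. Since it was already noted that the value $v(\gamma_k)$ of a representing state equals the number $a_k$ of active indices of the play it represents, a combinatorial strategy keeping every $v(\gamma_k)\leq t$ keeps every $a_k\leq t$, and hence bounds the topological score $\sup_i a_i$ by $t$; this is precisely what is needed to conclude $v(S)\leq t$.

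At $k=0$ the empty play is represented by $\gamma_0=(D_0,\chi_0,g(S))$, so the invariant holds. Assuming it at turn $k$, I translate the combinatorial Marker move $(v,w)\in V(D_k)\,\dot\cup\,\{v_\gamma,w_\gamma\}$ supplied by the strategy into a topological move $(x,y)$: a genuine vertex $v$ corresponds under the representation to a junction between consecutive active $\partial$-arcs, at which I place a point $x\in X_k$ (a legal choice by Lemma \ref{l:boundaryprop}, which identifies $X_k$ with the endpoints of active arcs), while a dummy vertex corresponds to a freshly chosen point of $\Int{S_k}$; similarly for $w$ and $y$. Under this dictionary the component $C$ of $D_k$ containing $v$ is exactly the cycle recording the boundary component of $\partial S_k$ on which $x$ lies (or a lone new interior point), and likewise for $C'$ and $y$.

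The heart of the argument is to show that whichever arc $f_{k+1}$ and component $S_{k+1}$ Cutter then chooses, the extended play is represented by a state obtainable by a legal combinatorial Cutter move, so that combinatorial play can be continued consistently; I would split along the structural lemmas of Section~\ref{s:top}. If $x,y$ lie on a common boundary component, or $x=y\in\Int{S_k}$, this is the setting of Lemma \ref{l:x=yorC=C'} and of the case $C=C'$: the cycle $C$ is replaced by $\hat C_1$ and $\hat C_2$, whose edges are the surviving active arcs on the two sides of $\partial D_i\setminus\partial f$ together with the two new sides $A_{k+1},A'_{k+1}$ of the cut, which receive a common new label just as $f,f'$ do. A connected $\cut(S_k,f_{k+1})$ drops the genus by one and keeps both cycles, matching move \ref{movea} (and explaining the restriction $g_k\geq1$, since a genus-$0$ surface always disconnects here); a disconnected cut leaves Cutter to retain the component carrying the robber, whose cycles and genus realise a permissible $\overline D$ and $\overline g\leq g_k$ as in move \ref{moveb} or \ref{movec}. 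If instead $x,y$ lie on distinct boundary components or are distinct interior points, this is Lemma \ref{l:xneqyandCneqC'} and the case $C\neq C'$: the cut stays connected with unchanged genus and amalgamates $C,C'$ into $\hat C$, matching move \ref{moved}. In every case $g_{k+1}=g(S_{k+1})$ and the new labelled cycles record precisely the active arcs of $\partial S_{k+1}$, so $\gamma_{k+1}$ represents the extended play.

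The main obstacle is exactly this case check: one must confirm that each of the many topological possibilities — interior versus boundary endpoints, a shared versus a distinct boundary component, and a connected versus disconnected cut — is captured by one of \ref{movea}--\ref{moved}, with the genus counter and the cyclic labelling transforming in lockstep. The logic then closes because the combinatorial Cutter is in fact \emph{more} permissive than the topological one (it may select $\overline D$ and $\overline g$ freely, whereas the topology fixes them): the move induced by any topological Cutter response is therefore a legal combinatorial move, so Marker's combinatorial strategy, which caps the value against all legal Cutter moves, in particular caps it against the induced sequence. Feeding that induced move back into the strategy produces Marker's next combinatorial move and closes the induction, yielding $v(S)\leq t$.
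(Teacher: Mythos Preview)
Your proposal is correct and follows essentially the same approach as the paper: run the two games in parallel, maintain the invariant that the combinatorial state represents the topological play, translate Marker's combinatorial choices (dummies $\mapsto$ interior points, vertices $\mapsto$ points of $X_k$), and use Lemmas~\ref{l:x=yorC=C'} and~\ref{l:xneqyandCneqC'} to verify that any topological Cutter response is matched by one of the combinatorial moves \ref{movea}--\ref{moved}. One small slip: in the disconnected case you refer to ``the component carrying the robber'', but there is no robber in the Marker--Cutter game---topological Cutter simply selects a component, and its boundary cycles and genus furnish the $\overline{D}$ and $\overline{g}$ of move \ref{moveb} or \ref{movec}.
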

\begin{proof}
Suppose Marker has such a strategy in the combinatorial game. We will use our strategy for Marker in the combinatorial Marker-Cutter game to give a strategy for Marker in the topological Marker-Cutter game by playing the two games `in parallel'. Suppose that both the combinatorial game and topological game have been played for $k$ turns, resulting in a game state $(D_k,\chi_k,g_k)$ which is a representation of the partial play $(f_i, S_i \colon i \in [k])$. 

At the start of the topological Marker-Cutter game on $S$ we have $S_0 = S$ and $X_0 = \emptyset$ and at the start combinatorial Marker-Cutter game the game state is $(D_0,\chi_0,g_0)$ where $D_0$ is the empty digraph, $\chi_0$ the empty function, and $g_0=g(S)$, and so $(D_0,\chi_0,g_0)$ is a representation of the empty partial play.

Combinatorial Marker's strategy in the combinatorial game calls for him to choose two elements $v,w$ from $V(D_k) \, \dot\cup \, \{v_\gamma,w_\gamma\}$. If $v \in \{v_\gamma,w_\gamma\}$ then our strategy for topological Marker is to choose a point $x \in \Int{S_k}$, and if $w \in \{v_\gamma,w_\gamma\}$ topological Marker chooses a point $y \in \Int{S_k}$ where $x=y$ if and only if $v=w$. Otherwise, say $v$ is a vertex on some cycle $C$ of $D_k$. Since $(D_k,\chi_k,g_k)$ is a representation of the partial play $(f_i, S_i \colon i \in [k] )$, the two edges incident to $v$ in $D_k$ correspond to $\partial$-arcs $B$ and $B'$ in $\mathcal{A}_k$, where we take the convention that $B=B'$ if $v$ is vertex in a loop. Then $B$ and $B'$ lie in the boundary of the same disk $\partial D \subseteq \partial S_k$ and furthermore, one of the two components of $D \setminus (\Int{B} \cup \Int{B}')$ contains no active arc. Let $B''$ be this component. Note that $B''$ is a (perhaps trivial) arc, and $\partial B'' \subseteq \partial B  \cup \partial B' \subseteq X_k$ by Lemma~\ref{l:boundaryprop}, where we let $\partial B'' = B''$ if $B''$ is a single point. In this case, our strategy for topological Marker is to choose a point $x \in \partial(B'') \subseteq X_k$. Marker makes a similar choice for $y$ if $w \in V(D_k)$.\footnote{Note that, even if $v=w \in V(C)$, technically Marker could choose different points $x$ and $y$. We could insist that Marker makes these choices in some consistent manner, but it will not matter.}

Topological Cutter then chooses an \arc{x}{y} $f_{k+1}$ in $S_k$ and a component $S_{k+1}$ of Cut$(S_k,f_{k+1})$. Note that, by Lemmas \ref{l:x=yorC=C'} and \ref{l:xneqyandCneqC'}, the representation $(D_{k+1},\chi_{k+1},g_{k+1})$ of the partial play $(f_i, S_i \colon i \in [k+1])$ can be obtained in the combinatorial game from the marked game $(D_k,\chi_k,g_k,v,w)$ by making one of the four moves available to combinatorial Cutter in the combinatorial Marker-Cutter game, and combinatorial Cutter makes that move.

Hence if we follow the above strategy, at each turn of the game combinatorial Marker plays according to his strategy, and $(D_k,\chi_k,g_k)$ is a representation of the partial play $(f_i, S_i \colon i \in [k] )$. Hence, by the comment above the theorem, the number of active indices $a_k$ is equal to the value of the game state $v(\gamma)$ for each $k$. Since combinatorial Marker is following a strategy which bounds the value of each game state by $t$, it follows that the score of this play of the topological game is $\sup_{ k \in \mathbb{N}} a_k \leq t$ and hence $v(S) \leq t$.
\end{proof}

The converse of Lemma \ref{l:gameequiv} is also true, and so the combinatorial and topological Marker-Cutter games are equivalent. However we will not use this fact, and so omit the details.

\subsection{Restricted Combinatorial Cutter}
In the combinatorial Marker-Cutter game, Cutter may often make moves that either do not change the game state in any meaningful way, or even make it `simpler' (and in this way worse for Cutter). 

It will be useful for us to show that we may assume that Cutter, when playing optimally, does not make any such moves. This will allow us to control the number of options that Cutter has at each stage, and so explicitly describe a strategy of Marker which keeps the value of the game state low.

\begin{defn}
\label{def_equivalence}
Given a set of (proper) boundary cycles $(D,\chi)$, a \emph{contraction} is performed by choosing an edge $e \in E(D)$ and contracting it (deleting any resulting isolated vertices). 

A game state $(D',\chi',g')$ is a \emph{reduction} of a game state $(D,\chi,g)$ if $g'\leq g$, $\chi' = \chi$ on $E(D')$ and $D$ can be obtained from $D'$ via a sequence of contractions.

Two game states $(\mathcal{D},\chi,g)$ and $(\mathcal{D}',\chi',g)$ are \emph{equivalent} if there is a bijection $\phi$ from the labels of $\chi$ to the labels of $\chi'$ such that $\phi \circ \chi = \chi'$. 

We write $(D',\chi',g') \preccurlyeq (D,\chi,g)$ if $(D',\chi',g')$ is equivalent to a reduction of $(D,\chi,g)$.
\end{defn}

\begin{defn}
\emph{Restricted Cutter} is a player who is never allowed to make a move which returns to a game state which is equivalent to a reduction of an earlier game state, and is always forced to take $\overline{D} = D \setminus C$ and $\overline{g}=g$ for a move of type (a), (b) or (c). If at any point restricted Cutter has no legal moves, the game ends. \end{defn}

One might expect that an optimal strategy for combinatorial Cutter should never move to a game state which is equivalent to a reduction of the game state at a previous step, since any value he can achieve from the reduced game state, he should also have been able to achieve from the game state at the previous step, with perhaps even the advantage of some extra edges which were contracted in the reduction. Our next theorem justifies this intuition. In this theorem, we say a game state $(D,\chi,g)$ \emph{occurs} in a game played according to some strategy of Marker, if there is a sequence of moves for Marker and Cutter, where Marker always plays according to his strategy, which eventually arrives at $(D,\chi,g)$.

\begin{theorem}\label{l:nonmove}
If Marker has a strategy against restricted Cutter limiting the value of each game state to $t$, then Marker also has such a strategy against combinatorial Cutter.
\end{theorem}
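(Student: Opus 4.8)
The plan is to turn Marker's strategy against restricted Cutter into one against combinatorial Cutter by playing an auxiliary (``shadow'') restricted game alongside the real one, and coupling the two so that the real game state is always dominated by the shadow state. Write $\gamma^{\mathrm{real}}$ and $\gamma^{\mathrm{aux}}$ for the two states. The governing invariant will be $\gamma^{\mathrm{real}} \preceq \gamma^{\mathrm{aux}}$ (the real state a reduction, i.e.\ a contraction, of the shadow state), together with the requirement that the sequence of shadow states forms a genuine play of the restricted game in which Marker follows the given strategy $\sigma$. Two preliminary facts make this worthwhile. First, $\preceq$ is a partial order: composing contractions gives a contraction, genus bounds compose, and relabelling bijections compose. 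Second, and crucially, $\gamma_1 \preceq \gamma_2$ implies $v(\gamma_1) \leq v(\gamma_2)$, since contracting an edge (and deleting any isolated vertex it creates) can only remove labels from the active set, while the equivalence relabelling preserves the value. Granting the invariant, the shadow play witnesses $v(\gamma^{\mathrm{aux}}) \leq t$, whence $v(\gamma^{\mathrm{real}}) \leq v(\gamma^{\mathrm{aux}}) \leq t$ at every stage, which is exactly what we must prove. I would also record, for later use, that a whole boundary cycle can be removed by contractions (shrink it to a single loop, contract that loop, and delete the resulting isolated vertex), so that Cutter's discarding of components and lowering of the genus counter both land inside the relation $\preceq$.

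Both games start at the common state $\gamma_0$, so the invariant holds initially. In a general round Marker consults $\sigma$ at $\gamma^{\mathrm{aux}}$ to obtain a mark $(v,w)$, and transports it to the real game along the contraction map $\gamma^{\mathrm{aux}} \to \gamma^{\mathrm{real}}$ underlying $\gamma^{\mathrm{real}} \preceq \gamma^{\mathrm{aux}}$: a shadow vertex lying in a cycle still present in the real game is sent to its image, while shadow vertices lying in cycles already deleted in the real game are replaced by dummy vertices, subject to the rule that two shadow vertices are sent to the \emph{same} dummy exactly when they lie in the same deleted cycle. This rule is what keeps Marker's move in the same branch (``$C=C'$'' versus ``$C \neq C'$'') in both games. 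Combinatorial Cutter then answers in the real game, reaching $\gamma^{\mathrm{real}}_{\mathrm{new}}$, and I update the shadow as follows. If $\gamma^{\mathrm{real}}_{\mathrm{new}} \preceq \gamma^{\mathrm{aux}}_j$ for some earlier shadow state $\gamma^{\mathrm{aux}}_j$ on the current shadow play (a ``revisit''), I rewind the shadow play to $\gamma^{\mathrm{aux}}_j$; the invariant is then immediate and the value bound is inherited. (One may assume $\sigma$ is positional, or simply carry the truncated history, so that rewinding is legitimate.) Otherwise I let shadow Cutter make the corresponding non-discarding move, namely the same choice among (a)--(d) but always with $\overline{D} = D \setminus C$ and $\overline{g}=g$, producing $\gamma^{\mathrm{aux}}_{\mathrm{new}}$.

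The heart of the argument is the correspondence lemma asserting that this shadow move can always be chosen so that $\gamma^{\mathrm{real}}_{\mathrm{new}} \preceq \gamma^{\mathrm{aux}}_{\mathrm{new}}$, and that it is legal for restricted Cutter. Legality is formal: were $\gamma^{\mathrm{aux}}_{\mathrm{new}} \preceq \gamma^{\mathrm{aux}}_i$ for an earlier shadow state, transitivity would give $\gamma^{\mathrm{real}}_{\mathrm{new}} \preceq \gamma^{\mathrm{aux}}_i$, placing us in the revisit case instead. Establishing domination, however, requires a genuine case analysis over the four Cutter moves and the degenerate loop/dummy configurations, checking that the edges arising from deleted cycles contract away to recover the real outcome, that the freshly created label $\ell'$ matches on both sides (the same new symbol may be used, since the real active labels form a subset of the shadow ones), and that the genus bookkeeping is consistent---the only delicate point being move (a), which is available in the shadow precisely when it is available in the real game because $g^{\mathrm{real}} \leq g^{\mathrm{aux}}$. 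I expect this case analysis to be the main obstacle: in particular one must verify that when both marks fall in a common deleted cycle the single-dummy convention forces both games into the same ``$C=C'$'' branch, so that the shadow's two paths contract onto the real loops and domination survives, whereas using two distinct dummies there would create a ``$C \neq C'$'' mismatch that no shadow option can dominate. Finally, since each shadow state lies on a restricted $\sigma$-play and the invariant forces $v(\gamma^{\mathrm{real}}) \leq t$ throughout, the constructed strategy bounds the value of the combinatorial game by $t$, proving the theorem. I note that termination of the real game is never needed: it suffices that the value stay bounded at every state, even should combinatorial Cutter revisit indefinitely.
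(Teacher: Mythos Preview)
Your proposal is correct and follows essentially the same approach as the paper: both run a shadow restricted game alongside the real combinatorial game, maintain the invariant that the real state is $\preceq$ the shadow state, transport Marker's move along the contraction map (with the same dummy-vertex convention for cycles that have been contracted away), and handle the case where the corresponding shadow move would be illegal for restricted Cutter by rewinding the shadow play to the earlier dominating state. Your write-up is, if anything, slightly more explicit than the paper's about why the dummy convention is needed to keep the $C=C'$ versus $C\neq C'$ branches aligned and about the genus bookkeeping for move~(a).
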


\begin{proof}
To fix some notation, we show that if Marker has a strategy $\Phi$ in the game $\mathcal{G}_{RC}$ against restricted Cutter limiting the value of each game state to $t$ indefinitely, then Marker also has a strategy $\Psi$ in the game $\mathcal{G}_{C}$ against combinatorial Cutter limiting the value of each game state to $t$ indefinitely.

Given $\Phi$, we will recursively define such a strategy $\Psi$ in the game $\mathcal{G}_{C}$ for Marker preserving at all times the property $(\star)$ that for any game state $(D,\chi,g)$ occurring in the game $\mathcal{G}_{C}$ played according to $\Psi$, there is an associated game state $f(D,\chi,g)$ occurring in the game $\mathcal{G}_{RC}$ played according to $\Phi$ such that $(D,\chi,g) \preccurlyeq f(D,\chi,g)$, i.e.\ so that $(D,\chi,g)$ is equivalent to a reduction of $f(D,\chi,g)$. Then we clearly have $v(D,\chi,g) \leq v(f(D,\chi,g)) \leq t$, and hence any such strategy $\Psi$ limits the value of each game state to $t$ indefinitely.

For the recursive construction, suppose we are given game states $(D,\chi,g)$ and $f(D,\chi,g)=(D',\chi',g')$ as above. Let $(D',\chi',g',v',w')$ be the response of Marker to $(D',\chi',g')$ according to $\Phi$. Since $(D,\chi,g) \preccurlyeq (D',\chi',g')$, the marked game state $(D',\chi',g',v',w')$ induces a marked game state $(D,\chi,g,v,w)$, which we will take as the response of Marker to $(D,\chi,g)$ according to $\Psi$. More precisely, if $v'$ lies on a cycle of $D'$ which is not contracted to a single point, let $v$ be the image of $v'$ under the contraction map. Otherwise, let $v$ be a dummy vertex. Do the same for $w$, with the additional condition that $v$ and $w$ are the same dummy vertex if and only if $v'$ and $w'$ lay on the same (contracted) cycle or were the same dummy vertex.

It remains to ensure that property $(\star)$ holds for all possible responses $(D^{(1)},\chi^{(1)},g^{(1)})$ to $(D,\chi,g,v,w)$ of combinatorial Cutter in the game $\mathcal{G}_{C}$. Suppose combinatorial Cutter makes a move of type $(\dagger)$ (where $\dagger = a,b,c$ or $d$). 
It may be that restricted Cutter cannot make a move of type $(\dagger)$ in response to $(D',\chi',v',w')$, since it would result in a game state $(D^{(2)},\chi^{(2)},g^{(2)})$ equivalent to a reduction of a game state $(D^{(3)},\chi^{(3)},g^{(3)})$ occurring earlier in the game $\mathcal{G}_{RC}$ played according to $\Phi$. However, it follows from our choice of $v$ and $w$ that $(D^{(1)},\chi^{(1)},g^{(1)}) \preccurlyeq (D^{(2)},\chi^{(2)},g^{(2)})$, and hence $(D^{(1)},\chi^{(1)},g^{(1)}) \preccurlyeq (D^{(3)},\chi^{(3)},g^{(3)})$, and we define $f(D^{(1)},\chi^{(1)},g^{(1)}) := (D^{(3)},\chi^{(3)},g^{(3)})$. Otherwise, we let $(D^{(2)},\chi^{(2)},g^{(2)})$ be the game state after restricted Cutter makes a move of type $(\dagger)$. Then as before it follows from our choice of $v$ and $w$ that $(D^{(1)},\chi^{(1)},g^{(1)}) \preccurlyeq (D^{(2)},\chi^{(2)},g^{(2)})$ and so we can define $f(D^{(1)},\chi^{(1)},g^{(1)}) := (D^{(2)},\chi^{(2)},g^{(2)})$.
\end{proof}

\subsection{Forcing moves against restricted Cutter}

Hence, given Lemma~\ref{l:gameequiv} and the previous theorem, we only need to show that combinatorial Marker has a strategy against restricted Cutter to limit the value of each game state to at most $t$. This is helpful, as it turns out that in a number of cases described in the following lemmas, Marker can force restricted Cutter to make specific responses.

\begin{defn}
Given a cycle $C$ with edge labelling $\chi$ and labels $a,b$ occurring in $\chi$, we say that vertices $v,w \in V(C)$ \emph{gather} $a$, if the label $a$ only occurs on one segment of $C$ between $v$ and $w$. We say that $v,w \in V(C)$ \emph{separate} $a$ and $b$, if the label $a$ only occurs on one segment, and the label $b$ only occurs on the other segment of $C$ between $v$ and $w$.
\end{defn}

\begin{lemma}\label{l:limitedchoice}
Suppose for some game state $(D_k,\chi_k,g_k)$ there is a directed cycle $C$ with an isolated label $a$. Assume that Marker chooses vertices $v,w \in V(C)$ which gather $a$, and that $a$ occurs as a label on $\hat C_1$. Then the move (\ref{movec}) is not legal for restricted Cutter.
\end{lemma}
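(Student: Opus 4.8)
The plan is to show that the game state produced by move (\ref{movec}) is equivalent to a reduction of the current state $(D_k,\chi_k,g_k)$, which occurs earlier in the play; since restricted Cutter may never return to such a state, move (\ref{movec}) is then illegal. First I would record what move (\ref{movec}) produces. As $v,w$ both lie on $C$ we are in the case $C=C'$, and restricted Cutter is forced to take $\overline D = D_k \setminus C$ and $\overline g = g_k$, so the move yields
\[
D_{k+1} = (D_k\setminus C)\cup \hat C_2, \qquad \hat C_2 = P'\cup\{f'\}, \qquad g_{k+1}=g_k,
\]
with $f'$ carrying a fresh label $\ell'\notin \chi_k(E(D_k))$. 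Since $a$ occurs on $\hat C_1 = P\cup\{f\}$ and $f$ carries the new label $\ell'\neq a$, the label $a$ must occur on $P$; as $v,w$ gather $a$, it then occurs \emph{only} on $P$ and not on $P'$. In particular $a$ does not appear anywhere in $D_{k+1}$, being isolated (hence absent from $D_k\setminus C$) and absent from $\hat C_2$.

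Next I would construct the reduction explicitly. Fix an edge $e_0$ of $P$ with $\chi_k(e_0)=a$, and contract every other edge of $P$. This yields a game state $(D_k^\ast,\chi^\ast,g_k)$ with $D_k^\ast = (D_k\setminus C)\cup C^\ast$, where $C^\ast = P'\cup\{e_0\}$ is the directed cycle obtained by closing up $P'$ with the single surviving edge $e_0$, and the labels on all surviving edges are unchanged. By construction $(D_k^\ast,\chi^\ast,g_k)$ is a reduction of $(D_k,\chi_k,g_k)$.

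Finally I would check that $(D_{k+1},\chi_{k+1},g_{k+1})$ is equivalent to $(D_k^\ast,\chi^\ast,g_k)$. The digraphs $D_k^\ast$ and $D_{k+1}$ are isomorphic, each consisting of $D_k\setminus C$ together with the path $P'$ closed by a single extra edge ($e_0$, respectively $f'$); under this identification every label agrees except on that extra edge, which carries $a$ in $D_k^\ast$ and $\ell'$ in $D_{k+1}$. I claim the relabelling $\phi$ fixing every label except $a\mapsto\ell'$ witnesses the equivalence, i.e.\ $\phi\circ\chi^\ast = \chi_{k+1}$. The point requiring care is that $\phi$ is a genuine bijection on the relevant labels: this needs $a$ to appear in $D_k^\ast$ only on $e_0$, which follows because $a$ is isolated and gathered on $P$, so that if $a$ occurred on a second edge of $C$ (at most one more, by properness) that edge lay on $P$ and was contracted away; and it needs $\ell'$ to be absent from $D_k^\ast$, which holds since $\ell'$ is new. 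Hence $(D_{k+1},\chi_{k+1},g_{k+1})\preccurlyeq (D_k,\chi_k,g_k)$, and move (\ref{movec}) is not legal for restricted Cutter.

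The main obstacle is the bookkeeping in this last step: pinning down that after contraction the label $a$ survives on exactly one edge (using \emph{isolated} and \emph{gather} together with properness) and that the resulting relabelling is a bijection matching the two states. A secondary point to dispatch is the degenerate case where $v=w$ or $P'$ is trivial, in which $\hat C_2$ is a loop; the same contraction argument applies verbatim, with $C^\ast$ a loop on a single vertex.
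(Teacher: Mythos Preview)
Your proof is correct and follows exactly the same approach as the paper's: both show that the game state resulting from move (\ref{movec}) is equivalent to the reduction of $(D_k,\chi_k,g_k)$ obtained by contracting all edges of $P$ except one edge labelled $a$. The paper states this in a single sentence, while you carefully spell out the bijection of labels and verify it is well-defined; the parenthetical appeal to properness is harmless but unnecessary, since ``isolated'' plus ``gathered on $P$'' already forces every $a$-labelled edge to lie on $P$.
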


\begin{proof}
A move of type (\ref{movec}) is equivalent to a reduction of $(D_k,\chi_k,g_k)$ after contracting all edges of $C$ which appear in $\hat{C}_1$ except one edge labelled $a$.
\end{proof}

\begin{lemma}\label{l:nochoice}
Suppose for some game state $(D_k,\chi_k,g_k)$ there is a directed cycle $C$ with isolated labels $a$ and $b$. If Marker chooses vertices $v,w \in V(C)$ separating the labels $a$ and $b$, then moves of type (\ref{moveb}) and (\ref{movec}) are not legal for restricted Cutter. In particular, he either has to make a move of type (\ref{movea}), or has no legal move.
\end{lemma}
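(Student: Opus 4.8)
The statement to prove is Lemma~\ref{l:nochoice}: if a cycle $C$ carries two isolated labels $a$ and $b$, and Marker picks $v,w\in V(C)$ that separate $a$ and $b$, then restricted Cutter cannot legally play move~(\ref{moveb}) or move~(\ref{movec}), so only move~(\ref{movea}) (or no move) remains. The plan is to mirror exactly the argument of Lemma~\ref{l:limitedchoice}: show that each of the two forbidden moves produces a game state that is equivalent to a reduction of the current game state $(D_k,\chi_k,g_k)$, which is precisely what restricted Cutter is forbidden from doing.

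First I would recall the setup of the $C=C'$ case of Cutter's move. Splitting $v$ and $w$ and adding the two new edges $f,f'$ breaks $C$ into the two cycles $\hat C_1$ and $\hat C_2$, where $\hat C_1$ and $\hat C_2$ carry the two segments $P$ and $P'$ of $C$ between $v$ and $w$. By the separating hypothesis, all occurrences of $a$ lie on one segment, say the one inside $\hat C_1$, and all occurrences of $b$ lie on the other segment, inside $\hat C_2$. Move~(\ref{moveb}) keeps only $\hat C_1$ (discarding $\hat C_2$, and hence discarding every edge labelled $b$), while move~(\ref{movec}) keeps only $\hat C_2$ (discarding every edge labelled $a$); in both cases the new edge(s) $f$, $f'$ receive a fresh label $\ell'$.

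The key step is to exhibit the reduction explicitly for each forbidden move. Consider move~(\ref{moveb}), which yields $\hat C_1$. Since $a$ is isolated and appears only within $\hat C_1$, I would contract all edges of $\hat C_1$ except a single edge labelled $a$; because $\hat C_1$ is a cycle on which $f$ is the unique new edge, after contracting everything but one $a$-edge we are left with a digraph whose relevant cycle is equivalent to the original cycle $C$ restricted to the label $a$. Crucially, the discarded edges labelled $b$ in $\hat C_2$ correspond, under the reduction relation $\preccurlyeq$ (Definition~\ref{def_equivalence}), to edges that $(D_k,\chi_k,g_k)$ already contains: since $g'\le g_k$, $\chi'=\chi$ where defined, and $D_k$ is obtainable from the smaller state by contractions, the resulting state is equivalent to a reduction of $(D_k,\chi_k,g_k)$. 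Hence move~(\ref{moveb}) is illegal for restricted Cutter. The argument for move~(\ref{movec}) is symmetric, swapping the roles of $a,b$ and of $\hat C_1,\hat C_2$, contracting $\hat C_2$ down to a single $b$-edge.

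The main obstacle—and the point requiring the most care—is the bookkeeping of labels under the reduction: one must check that the fresh label $\ell'$ assigned to $f$ (and $f'$) does not obstruct the equivalence, since $\ell'$ is outside the image of $\chi_k$. This is handled by the bijection $\phi$ in the definition of equivalence, which is allowed to relabel; the single surviving edge labelled $a$ (respectively $b$) carries its original label, so after contracting all $\ell'$-labelled and other auxiliary edges down, the label multiset matches that of a reduction of $(D_k,\chi_k,g_k)$ up to the permitted bijection. Once both moves~(\ref{moveb}) and~(\ref{movec}) are ruled out, the final sentence follows immediately from the enumeration of Cutter's options in the $C=C'$ case: the only remaining move is~(\ref{movea}), which is itself available only when $g_k\ge 1$, so otherwise restricted Cutter has no legal move at all.
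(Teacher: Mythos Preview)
Your overall idea is the same as the paper's: both moves (\ref{moveb}) and (\ref{movec}) should be shown to produce a state equivalent to a reduction of $(D_k,\chi_k,g_k)$. The paper does this in one line by invoking Lemma~\ref{l:limitedchoice} twice---once with the isolated label $a$ on $\hat C_1$ (ruling out (\ref{movec})) and once with the isolated label $b$ on $\hat C_2$ (ruling out (\ref{moveb}))---since separating $a$ and $b$ in particular gathers each of them.

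Your execution, however, has two concrete errors that make the argument fail as written.

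First, for move (\ref{moveb}) you use the wrong label. Move (\ref{moveb}) keeps $\hat C_1$ and discards $\hat C_2$; the isolated label that disappears is $b$ (which lives on $P'\subset \hat C_2$), not $a$. The correct reduction is: in the \emph{original} cycle $C\subset D_k$, contract every edge of $P'$ except one edge labelled $b$. What remains is the cycle $P$ together with a single $b$-edge, and this matches $\hat C_1=P\cup\{f\}$ under the label bijection $\ell'\leftrightarrow b$ (all other components and labels are unchanged since $b$ is isolated). The symmetric argument with $a$ handles move (\ref{movec}).

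Second, and more seriously, your contraction goes in the wrong direction. To exhibit $D_{k+1}$ as a reduction of $D_k$ you must contract edges of $D_k$ and arrive at (something equivalent to) $D_{k+1}$. You instead contract edges of $\hat C_1\subset D_{k+1}$ down to a single $a$-loop; this shows only that some \emph{further} reduction of $D_{k+1}$ is a reduction of $D_k$, which does not imply $D_{k+1}\preccurlyeq (D_k,\chi_k,g_k)$. The sentence ``$D_k$ is obtainable from the smaller state by contractions'' is simply false: contractions cannot enlarge a digraph. Once you swap the label and reverse the direction of the contraction as above, the argument goes through immediately---and is exactly the content of Lemma~\ref{l:limitedchoice}.
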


\begin{proof}
Apply Lemma \ref{l:limitedchoice} twice, exchanging the roles of $\hat C_1$ and $\hat C_2$ for the second application, and note that (\ref{moved}) is not a legal move since $v$ and $w$ lie on the same cycle.
\end{proof}

As a simple consequence of the previous two lemmas, we note that if Marker is playing against restricted Cutter then the value of the game state must increase after every turn.

\begin{cor}\label{c:valueincrease}
For any game state $\gamma$ and any move of Marker, if restricted Cutter has a legal move in response then for the resulting gamestate $\gamma'$ we have that $v(\gamma') = v(\gamma) +1$.
\end{cor}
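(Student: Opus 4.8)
\S Proof proposal

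The plan is to establish the two inequalities $v(\gamma')\le v(\gamma)+1$ and $v(\gamma')\ge v(\gamma)+1$ separately. The first is immediate from the labelling convention, while the second will follow from the defining restriction on restricted Cutter. For the upper bound, observe that in each of Cutter's four moves every edge of $D_{k+1}$ is either an old edge, which retains its $\chi_k$-label, or one of the new edges $f,f'$, which carry the single fresh label $\ell'$ lying outside the image of $\chi_k$. Hence the set of active labels of $\gamma'$ is contained in that of $\gamma$ together with $\{\ell'\}$, so $v(\gamma')\le v(\gamma)+1$. Moreover $\ell'$ genuinely occurs in $\gamma'$ (on $f$, or on $f'$ in move (\ref{movec})), so $v(\gamma')$ equals one plus the number of labels of $\gamma$ that still occur in $\gamma'$. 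Consequently $v(\gamma')=v(\gamma)+1$ precisely when no label of $\gamma$ is lost, and it therefore suffices to rule out, for a legal move of restricted Cutter, the possibility that some label $a$ of $\gamma$ disappears in $\gamma'$.

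The key step is to show that any move losing a label is equivalent to a reduction of $\gamma$ itself, and so is illegal for restricted Cutter, since $\gamma$ is the immediately preceding (hence earlier) game state. First I would dispose of moves of type (\ref{movea}) and (\ref{moved}): because restricted Cutter is forced to take $\overline{D}=D_k\setminus C$ and $\overline{g}=g_k$, in these moves every edge of the old cycle(s) survives --- split between $\hat{C}_1$ and $\hat{C}_2$ in (\ref{movea}), or retained inside the amalgamated cycle $\hat{C}$ in (\ref{moved}) --- so no label can be lost, and the value rises by exactly one. Thus a lost label can occur only in a move of type (\ref{moveb}) or (\ref{movec}), where exactly one of the two directed paths arising from splitting $C$ (namely $P'$ in (\ref{moveb}), or $P$ in (\ref{movec})) is discarded.

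For such a move, a lost label $a$ must appear only on the discarded path. Mimicking the reduction used in the proof of Lemma \ref{l:limitedchoice}, I would contract that discarded path in $D_k$ down to a single surviving edge labelled $a$, keeping the counter equal to $g_k=\overline{g}$ (which satisfies the requirement $g'\le g_k$ for a reduction). The resulting game state differs from $\gamma'$ only in that this one edge carries $a$ rather than $\ell'$; hence the relabelling that swaps $a$ and $\ell'$ and fixes every other label exhibits $\gamma'$ as equivalent to a reduction of $\gamma$, that is, $\gamma'\preccurlyeq\gamma$. This contradicts the legality of Cutter's move. Therefore a legal move loses no label, and combined with the upper bound we conclude $v(\gamma')=v(\gamma)+1$.

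The main obstacle I anticipate is the bookkeeping in this last step. One must verify that the lost label $a$ really occurs only on the discarded path --- so that contracting the other edges of that path away leaves a labelling consistent with $\chi_k$ and preserves properness --- and check that the contracted directed cycle is isomorphic, as a labelled directed cycle, to $\hat{C}_1$ (respectively $\hat{C}_2$) under the swap $a\leftrightarrow\ell'$. One should also confirm that the degenerate dummy-vertex and loop configurations behave as expected. None of these points is deep, but they are exactly where the clean statement could fail were restricted Cutter permitted the reducing moves that ordinary combinatorial Cutter may make.
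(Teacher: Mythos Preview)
Your proposal is correct and follows essentially the same approach as the paper: dispose of moves (\ref{movea}) and (\ref{moved}) directly, and for (\ref{moveb})/(\ref{movec}) argue that a lost label forces $\gamma'\preccurlyeq\gamma$. The only difference is cosmetic --- the paper observes that a lost label is necessarily isolated and gathered by $v,w$, and then simply invokes Lemma~\ref{l:limitedchoice}, whereas you unpack that lemma's contraction-and-relabelling argument by hand; both arrive at the same reduction.
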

\begin{proof}
Moves of type (\ref{movea}) or (\ref{moved}) always increase the value of the game state by one. Suppose for contradiction that restricted Cutter makes a move of type (\ref{moveb}) or (\ref{movec}) and $v(\gamma') \leq v(\gamma)$. Let $v,w$ be the elements that Marker chose, lying on the cycle $C$. Since $v(\gamma') \leq v(\gamma)$ we may assume that $C$ is non-trivial.

Since there is always a new label in $\gamma'$ which doesn't appear in $\gamma$, the only way that $v(\gamma') \leq v(\gamma)$ is if there is some label $\ell$ in $\gamma$ which doesn't appear in $\gamma'$. However, the label $\ell$ must then be gathered by $v$ and $w$, and so Lemma \ref{l:limitedchoice} implies that any move in which $\ell$ does not appear in $\gamma'$ is not a legal move for restricted Cutter.
\end{proof}

\section{Strategies for the Combinatorial Game}

Our aim in this section is to show that if both players play optimally in the combinatorial Marker-Cutter game, then the limit supremum of the value will be just over $\frac43g_0$. A key concept will be a number called the {\em potential} of the game, which will remain constant under optimal play. The potential is given by the sum of $4(g_0-g) - 3v(\gamma)$ with an extra term which, roughly speaking, encodes the extent to which the structure of the components of $D$ may allow Marker to force Cutter to make genus-reducing moves in the near future.

%Old definition of potential
%More precisely, we define the {\em potential} $p(C, D)$ of a component $C$ of $D$ with say $n$ edges in which there are $l$ pairs of consecutive edges with the same label and $m$ of the edge labels are uniquely appearing in $D$ to be $\frac12(n + l) + m - 2$. Equivalently, every such cycle $C$ starts from potential $-2$, with each edge with a uniquely appearing label contributing an additional value of $3/2$, each pair of successive edges with the same label together also contributing $3/2$, and all remaining edges each contributing $1/2$. Note that no cycle with a single edge can have positive potential, while every cycle with at least five edges does have positive potential.

%The \emph{potential} of a game state $\gamma = (D, \chi, g)$ is then
%\[
%p(\gamma) := 4(g_0 - g) - 3v(D) + \sum_{\substack{C \in \text{Comp}(D) \\ p(C, D) > 0}} p(C, D)
%\]

More precisely, let $\gamma = (D, \chi, g)$ be a game state, let $S$ be a path or cycle in $D$, and let $e$ be an edge contained in $S$. Then we define the \emph{potential}
\[
p(e,S,\gamma) = 
\begin{cases}
\frac 32 & \text{if the label of $e$ is uniquely appearing},\\
\frac 34 & \text{if $e$ is incident to an edge on $S$ with the same label as $e$},\\
\frac 12 & \text{otherwise}.
\end{cases}
\]
Define $p(S,\gamma) = -2+\sum_{e \in S} p(e,S,\gamma)$. The \emph{potential} of the game state $\gamma$ is finally defined by
\[
p(\gamma) := 4(g_0 - g) - 3v(\gamma) + \sum_{\substack{C \in \text{Comp}(D) \\ p(C, \gamma) > 0}} p(C, \gamma)
\]

\subsection{Effect of different moves on the potential}
Since the strategies that we give for Marker and Cutter hinge on bounding $p(\gamma)$, it is beneficial to investigate how different moves in the game effect the potential. For this purpose let $\gamma_k = (D_k, \chi_k, g_k)$ and $\gamma_{k+1} = (D_{k+1}, \chi_{k+1}, g_{k+1})$ be consecutive game states, and let $v,w$ be the elements chosen by Marker. Throughout this section, we will assume that Cutter plays as restricted Cutter,  and so by Corollary \ref{c:valueincrease} the value of the game state will increase by one on each turn.

\begin{lemma}
\label{lem:movedbound}
Assume that $v$ and $w$ lie in different components $C$ and $C'$ of $D_k$ respectively, and consequently restricted Cutter has to respond with a move of type (\ref{moved}).
\begin{enumerate}
    \item $p(\gamma_{k+1}) \leq p(\gamma_k)$.
    \item If $p(C,\gamma_k) \geq 0$, $p(C',\gamma_k) \geq 0$ and both $v$ and $w$ are incident to edges with different labels, then $p(\gamma_{k+1}) = p(\gamma_k)$.
\end{enumerate} 
\end{lemma}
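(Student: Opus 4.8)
The plan is to track exactly which terms of the potential change when the cycles $C$ and $C'$ are amalgamated into $\hat C$, and to reduce both statements to a single comparison of $p(\hat C,\gamma_{k+1})$ with $p(C,\gamma_k)$ and $p(C',\gamma_k)$. Since the move is of type (\ref{moved}) we have $g_{k+1}=g_k$, so the term $4(g_0-g)$ is unchanged, and since Cutter plays as restricted Cutter, Corollary \ref{c:valueincrease} gives $v(\gamma_{k+1})=v(\gamma_k)+1$, so $-3v(\gamma)$ drops by exactly $3$. The only new label is the fresh label $\ell'$ placed on $f$ and $f'$, so no existing label changes its multiplicity; hence the uniquely-appearing status of every old label agrees in $\gamma_k$ and $\gamma_{k+1}$, and every component other than $C,C'$ is untouched (its edges, their labels, and their incidences are unchanged, so its per-edge potentials are unchanged). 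Writing $x^+=\max(x,0)$ and reading a dummy one-vertex ``component'' as contributing $0$, the whole change in the component-sum is $p(\hat C,\gamma_{k+1})^+ - p(C,\gamma_k)^+ - p(C',\gamma_k)^+$, giving the master formula
\[
\Delta p \;=\; p(\gamma_{k+1})-p(\gamma_k)\;=\; -3 + p(\hat C,\gamma_{k+1})^+ - p(C,\gamma_k)^+ - p(C',\gamma_k)^+ .
\]

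The core is the local computation of $p(\hat C,\gamma_{k+1})$. Recall $\hat C$ traverses all edges of $C$ (as $P$), then $f'$, all edges of $C'$ (as $P'$), then $f$. For an edge $e$ of $C$ not incident to the split vertex $v$, its two neighbours on $\hat C$ agree with those on $C$, so $p(e,\hat C,\gamma_{k+1})=p(e,C,\gamma_k)$, and similarly on $C'$ away from $w$. The only edges whose potential can move are the two edges $e_1,e_2$ of $C$ at $v$ (and the analogous pair at $w$): on $C$ they are mutually adjacent at $v$, whereas on $\hat C$ they are separated and each becomes adjacent to one of $f,f'$, which carry the fresh label $\ell'$. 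I would check that this can only destroy, never create, a same-label adjacency, so with $\delta_v=\sum_{e\in C}\bigl(p(e,\hat C)-p(e,C)\bigr)$ one has $\delta_v\le 0$; more precisely $\delta_v=0$ when $e_1,e_2$ carry different labels, and $\delta_v=-\tfrac12$ when they carry the same label (each then drops from $\tfrac34$ to $\tfrac12$, since properness forces that label to appear at most twice). Assembling the three blocks and using $p(f,\hat C)+p(f',\hat C)=1$ in the generic case (where $f,f'$ are non-adjacent, both in the ``otherwise'' case) yields
\[
p(\hat C,\gamma_{k+1}) \;=\; p(C,\gamma_k)+p(C',\gamma_k)+3+\delta_v+\delta_w, \qquad \delta_v,\delta_w\le 0 .
\]

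With this identity both parts fall out of the master formula. For part (2) the hypothesis that $v,w$ are each incident to edges of different labels forces $\delta_v=\delta_w=0$, and $p(C,\gamma_k),p(C',\gamma_k)\ge 0$ gives $p(\hat C,\gamma_{k+1})=p(C)+p(C')+3\ge 3>0$; every positive part then opens up and $\Delta p=-3+3=0$. For part (1) it suffices to establish, in all cases, the single inequality $p(\hat C,\gamma_{k+1})\le p(C,\gamma_k)^+ + p(C',\gamma_k)^+ +3$, whence $p(\hat C)^+\le p(C)^+ + p(C')^+ +3$ and $\Delta p\le 0$.

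The main obstacle, and the step needing the most care, is making this local bound genuinely uniform across the degenerate configurations rather than only the generic one. In particular one must handle separately the case where $v$ (or $w$) is a dummy vertex: then $P$ is trivial, $f$ and $f'$ become adjacent and share $\ell'$, so they contribute $\tfrac32$ instead of $1$, but $C$ supplies no edges and $p(C,\gamma_k)^+=0$; one checks $p(\hat C,\gamma_{k+1})=p(C',\gamma_k)+\delta_w+\tfrac32\le p(C',\gamma_k)^+ +3$, so part (1) still holds, while part (2) cannot trigger here since a dummy vertex is incident to no edges. One should likewise confirm that the loop case (a length-one cycle, where the two edges at the split vertex coincide) gives $\delta=0$ and is already covered by the generic formula. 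Collecting these checks into the single inequality $p(\hat C,\gamma_{k+1})\le p(C,\gamma_k)^+ + p(C',\gamma_k)^+ +3$ is exactly what lets the master formula deliver both statements at once.
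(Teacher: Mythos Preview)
Your proposal is correct and takes essentially the same approach as the paper: both arguments track how the per-edge potentials on $C$ and $C'$ compare with those on $\hat C$, derive $p(\hat C,\gamma_{k+1}) - p(C,\gamma_k) - p(C',\gamma_k) \le 3$ with equality exactly when $v$ and $w$ are each incident to edges of different labels, and then plug in $g_{k+1}=g_k$ and $v(\gamma_{k+1})=v(\gamma_k)+1$. Your treatment is somewhat more explicit than the paper's---you name the defects $\delta_v,\delta_w$ and spell out the dummy-vertex and loop degeneracies that the paper dismisses with ``it is easily checked''---but the underlying computation is identical.
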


\begin{proof}
Let $\hat C$ be the amalgamated cycle resulting from the move of type (\ref{moved}). For every edge $e$ in $C$ we have that $p(e,\hat C,\gamma_{k+1}) \leq p(e,C,\gamma_k)$ with equality unless $v$ is incident to $e$ and another edge with the same label as $e$. An analogous statement holds for $C'$ and $w$. If neither $C$ nor $C'$ are trivial, then $\hat C$ contains two additional edges with potential $\frac 12$, and we can conclude that 
\[
p(\hat C,\gamma_{k+1}) - p(C,\gamma_k) - p(C',\gamma_k) \leq 3
\]
with equality if and only if both $v$ and $w$ are incident to edges with different labels. If we interpret the potential of a trivial cycle as $-2$, it is easily checked that the above still holds when $C$ or $C'$ are trivial. The potentials of all other cycles are the same in $\gamma_k$ and $\gamma_{k+1}$. 

Finally note that if $p(\hat C,\gamma_{k+1}) < 0$, then the same must hold for $p(C,\gamma_k)$ and $p(C',\gamma_k)$ and that $g_{k+1} =  g_k$ by definition of (\ref{movec}). The lemma follows from plugging these observations into the definition of $p(\gamma)$, recalling that $v(\gamma_{k+1}) = v(\gamma_k) +1$.
\end{proof}

Next consider the case where $v$ and $w$ lie on the same cycle $C$. As in the previous section, define $\hat C_1 := P \cup \{f\}$ and $\hat C_2 := P' \cup \{f\}$.

\begin{lemma}
\label{lem:moveabound}
Assume that restricted Cutter chooses response (\ref{movea}). 
\begin{enumerate}
    \item If both $p(P,\gamma_k) \geq -\frac 12$ and $p(P',\gamma_k) \geq -\frac 12$, then $p(\gamma_{k+1}) \leq p(\gamma_k)$.
    \item If $v$ and $w$ gather each label that appears on $C$, then $p(\gamma_{k+1}) \geq p(\gamma_k)$.
\end{enumerate}
\end{lemma}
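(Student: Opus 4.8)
The plan is to reduce both parts to two clean structural facts about how move (\ref{movea}) transforms the per-component potential, and then finish by elementary arithmetic involving the truncation $x^+ := \max(x,0)$ implicit in the definition of $p(\gamma)$. Since restricted Cutter takes $\overline D = D_k \setminus C$ and $g_{k+1} = g_k - 1$, and since $v(\gamma_{k+1}) = v(\gamma_k)+1$ by Corollary \ref{c:valueincrease}, the only components whose contribution to $p(\gamma)$ changes are $C$ (which disappears) and $\hat C_1, \hat C_2$ (which appear). Hence the whole analysis rests on the identity
\[
p(\gamma_{k+1}) - p(\gamma_k) = 1 + p(\hat C_1,\gamma_{k+1})^+ + p(\hat C_2,\gamma_{k+1})^+ - p(C,\gamma_k)^+,
\]
where the leading $+1$ is $4(g_k - g_{k+1}) - 3(v(\gamma_{k+1}) - v(\gamma_k)) = 4 - 3$.

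The first key fact is the identity $p(\hat C_1,\gamma_{k+1}) = p(P,\gamma_k) + \tfrac12$, and symmetrically for $\hat C_2$ and $P'$. To prove it I would first observe that the new edges $f,f'$ both carry the fresh label $\ell'$, which appears exactly twice in $D_{k+1}$ and on two \emph{different} cycles; hence $\ell'$ is not uniquely appearing and $f$ has no same-label neighbour on $\hat C_1$, so $p(f,\hat C_1,\gamma_{k+1}) = \tfrac12$. Next, for any edge $e$ of $P$, passing from $D_k$ to $D_{k+1}$ leaves the multiset of \emph{old} labels unchanged, so the uniquely-appearing status of $e$ is unaffected, and the only new neighbour $e$ can acquire in $\hat C_1$ rather than in $P$ is $f$, whose label matches nothing; thus $p(e,\hat C_1,\gamma_{k+1}) = p(e,P,\gamma_k)$. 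Summing and comparing $p(\hat C_1) = -2 + \sum_{e\in\hat C_1} p(e,\hat C_1)$ with $p(P) = -2 + \sum_{e\in P} p(e,P)$ yields the $+\tfrac12$ shift.

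The second key fact compares $p(C,\gamma_k)$ with $p(P,\gamma_k) + p(P',\gamma_k)$. Since the edges of $P$ and $P'$ partition those of $C$, I would compare $p(e,C)$ against $p(e,P)$ (or $p(e,P')$) edge by edge. Interior edges have identical neighbourhoods in $C$ and in their segment, so their potentials agree; only the edges meeting the split vertices $v,w$ can differ, and for these the neighbour lost by cutting can at most remove a same-label incidence, so $p(e,C) \geq p(e,\cdot)$. This gives the general inequality $p(C,\gamma_k) \geq p(P,\gamma_k) + p(P',\gamma_k) + 2$. The refinement needed for part (2) is that if $v,w$ gather every label of $C$, then no same-label pair straddles the cut (the two edges at $v$, and likewise at $w$, lie in opposite segments and so must carry distinct labels), whence \emph{every} edge keeps its exact potential and the inequality sharpens to the equality $p(C,\gamma_k) = p(P,\gamma_k) + p(P',\gamma_k) + 2$.

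Finally I would assemble the two parts. For part (1), the hypotheses $p(P),p(P')\geq -\tfrac12$ force $p(\hat C_i) = p(P^{(i)}) + \tfrac12 \geq 0$, so the truncations vanish and $p(\hat C_1)^+ + p(\hat C_2)^+ = p(P)+p(P')+1$; since also $p(C)^+ = p(C) \geq p(P)+p(P')+2$ (which is $\geq 1 > 0$), the displayed difference is at most $1 + (p(P)+p(P')+1) - (p(P)+p(P')+2) = 0$. For part (2), writing $a = p(\hat C_1)$, $b = p(\hat C_2)$, the equality version of the second fact gives $p(C) = a+b+1$, so the difference equals $1 + a^+ + b^+ - (a+b+1)^+$, which is $\geq 0$ by checking the cases $a+b+1 \leq 0$ and $a+b+1 > 0$ (the latter using $a^+ + b^+ \geq a+b$). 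The main obstacle is the careful edge-potential bookkeeping at the split vertices---keeping straight the global ``uniquely appearing'' condition versus the segment-local same-label condition, and tracking exactly which same-label incidences survive the cut---together with handling the $\max(\cdot,0)$ truncation correctly; everything after the two key facts is arithmetic.
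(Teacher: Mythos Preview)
Your argument is correct and shares the paper's overall structure: isolate the change in per-cycle potential to the three cycles $C,\hat C_1,\hat C_2$, establish the key relation $p(\hat C_1,\gamma_{k+1}) + p(\hat C_2,\gamma_{k+1}) - p(C,\gamma_k) \leq -1$ (with equality under gathering), and combine with the $4\cdot 1 - 3\cdot 1 = +1$ coming from the genus and value terms. Where you differ is in the bookkeeping route. The paper compares $p(e,\hat C_i,\gamma_{k+1})$ directly against $p(e,C,\gamma_k)$; you instead factor through $p(e,P,\gamma_k)$, obtaining the exact identity $p(\hat C_i,\gamma_{k+1}) = p(P^{(i)},\gamma_k) + \tfrac12$ together with the separate inequality $p(C,\gamma_k) \geq p(P,\gamma_k)+p(P',\gamma_k)+2$. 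This finer decomposition pays off in part (2): writing $a = p(\hat C_1)$ and $b = p(\hat C_2)$, the gathering hypothesis gives exactly $p(C) = a+b+1$, and the required bound reduces to $1 + a^+ + b^+ - (a+b+1)^+ \geq 0$, which you dispose of in two cases; the paper instead handles the case $p(C,\gamma_k) < 0$ by an ad hoc check on cycles of length at most $3$.
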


\begin{proof}
As in the previous lemma, we start by observing that for every edge $e$ in $P$ we have that $p(e,\hat C_1,\gamma_{k+1}) \leq p(e,C,\gamma_k)$ and similarly for $P'$ and $\hat C_2$. If $v$ and $w$ gather the label of $e$, then equality holds. Consequently,
\[
p(\hat C_1,\gamma_{k+1}) + p(\hat C_2,\gamma_{k+1}) - p(C,\gamma_k) \leq -1
\]
with equality in case $v$ and $w$ gather each label that appears on $C$. Again, if $C$ is trivial we interpret its potential as $-2$ in the above. Potentials of all other cycles are the same in $\gamma_k$ and $\gamma_{k+1}$.

Now, if $p(P,\gamma_k) \geq -\frac 12$ and $p(P',\gamma_k) \geq -\frac 12$, then $p(\hat C_1,\gamma_{k+1}) \geq 0$, $p(\hat C_2,\gamma_{k+1}) \geq 0$, and $p(C,\gamma_k) \geq 0$. Recall that $g_{k+1} = g_k - 1$ by definition of (\ref{movea}) and so $(1)$ follows from the definition of $p(\gamma)$. Assuming that $v$ and $w$ gather each label that appears on $C$, to show $(2)$ we need to show that
\begin{equation}\label{e:max}
\max \{ 0, p(\hat C_1,\gamma_{k+1}) \} + \max \{0, p(\hat C_2,\gamma_{k+1})\} - \max \{ 0, p(C,\gamma_k)\} \geq  -1
\end{equation}
Since $p(\hat C_1,\gamma_{k+1}) + p(\hat C_2,\gamma_{k+1}) - p(C,\gamma_k) \leq -1$, if $p(C,\gamma_k) \geq 0$ then (\ref{e:max}) holds. However, if $p(C,\gamma_k)<0$ then the length of $C$ is $\leq 3$, and in each case it is a simple check that (\ref{e:max}) holds, again recalling that $v(\gamma_{k+1}) = v(\gamma_k) +1$.
\end{proof}

Lemmas \ref{lem:movedbound} and \ref{lem:moveabound} give conditions for Cutter to ensure that the potential does not increase and for Marker to make sure that the potential does not decrease, if moves of type (\ref{moved}) or (\ref{movea}) are played respectively. In case Cutter can chose a response of type (\ref{moveb}) or (\ref{movec}), it is slightly more difficult for Marker to make sure that the potential does not increase. To this end, we introduce the following notion.

\begin{defn}
\label{defn:nesting}
A \emph{nesting path} is one of the following:
\begin{itemize}
    \item a single edge with a uniquely appearing label,
    \item a path with non-isolated labels $(x,y,z)$ such that 
    \begin{itemize}
        \item $x$ and $z$ also occur on a cycle with labels $(x,t,z,t)$ for some label $t$, and
        \item $y$ also occurs on a cycle consisting of an edge with label $y$ and a nesting path.
    \end{itemize} 
\end{itemize}
\end{defn}

Note, since the game state is proper, a nesting path always contributes $\frac 32$ to the potential of the cycle it lies in. 
%Further note that if Cutter makes a move of type (\ref{moveb}) and there is a nesting path $S$ contained in $\hat C_2$, then $S$ cannot be a single edge due to Lemma \ref{l:limitedchoice}. The labels $x,y,z$ which occur on $S$ become uniquely appearing and thus we get two new active cycles, one with labels $(x,t,z,t)$, and the other consisting of an edge with label $y$ and a nesting path. An analogous observation can be made for moves of type (\ref{movec}).

\begin{lemma}
\label{lem:movebcbound}
Assume that restricted Cutter chooses response (\ref{moveb}).
\begin{enumerate}
    \item If $p(P',\gamma_k) < -\frac 12$, then $p(\gamma_{k+1}) \leq p(\gamma_k)$. 
    \item If $P'$ is a nesting path, then $p(\gamma_{k+1}) = p(\gamma_k)$. 
\end{enumerate}

Assume that restricted Cutter chooses response (\ref{movec}).
\begin{enumerate}[resume]
    \item If $p(P,\gamma_k) < -\frac 12$, then $p(\gamma_{k+1}) \leq p(\gamma_k)$. 
    \item If $P$ is a nesting path, then $p(\gamma_{k+1}) = p(\gamma_k)$. 
\end{enumerate}
\end{lemma}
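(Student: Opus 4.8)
The plan is to prove all four statements through case (b), since case (c) is obtained by interchanging the roles of $\hat C_1\leftrightarrow\hat C_2$, $P\leftrightarrow P'$ and $f\leftrightarrow f'$, so that everything below applies verbatim after relabelling. Throughout restricted Cutter makes the move, so by Corollary~\ref{c:valueincrease} the value rises by exactly one; since a move of type (b) leaves the genus unchanged,
\[
p(\gamma_{k+1}) - p(\gamma_k) = -3 + \Delta, \qquad \Delta := \sum_{\substack{C'\in\mathrm{Comp}(D_{k+1})\\ p(C',\gamma_{k+1})>0}} p(C',\gamma_{k+1}) - \sum_{\substack{C''\in\mathrm{Comp}(D_k)\\ p(C'',\gamma_k)>0}} p(C'',\gamma_k),
\]
so it suffices to show $\Delta\le 3$ for (1) and $\Delta=3$ for (2). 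The first thing I would extract from ``value $+1$'' is a rigidity statement: no old label may be discarded entirely, and the only discarded edges are those of $P'$ (the new edge $f'$ carries a fresh label), so every edge of $P'$ has its duplicate occurrence \emph{outside} $P'$. Hence each edge of $P'$ has potential exactly $\tfrac12$, giving $p(P',\gamma_k) = -2 + \tfrac12\lvert E(P')\rvert$ and $\lvert E(P')\rvert = q+m$, where $q$ (resp. $m$) counts the edges of $P'$ whose duplicate lies on $P$ (resp. in $\overline D = D_k\setminus C$).

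To control $\Delta$ cleanly I would compare $\gamma_{k+1}$ with the \emph{phantom} state $\gamma^a$ obtained from the same marked state by a move of type (a) (keeping both $\hat C_1$ and $\hat C_2$). In $\gamma^a$ nothing is discarded, so three facts follow by inspecting single-edge potentials: (i) since $f$ and $f'$ share the fresh label on different cycles, $p(\hat C_2,\gamma^a) = p(P',\gamma_k) + \tfrac12$; (ii) passing from $\gamma^a$ to $\gamma_{k+1}$ only removes the $P'$-copy of each split label, so $f$ jumps from $\tfrac12$ to $\tfrac32$ and exactly the $q$ edges of $P$ with a duplicate on $P'$ become uniquely appearing, whence $p(\hat C_1,\gamma_{k+1}) = p(\hat C_1,\gamma^a) + 1 + q$; and (iii) the core inequality established in the proof of Lemma~\ref{lem:moveabound}, $p(\hat C_1,\gamma^a) + p(\hat C_2,\gamma^a) - p(C,\gamma_k) \le -1$, with equality precisely when $v,w$ gather every label of $C$. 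Combining (i)--(iii),
\[
p(\hat C_1,\gamma_{k+1}) - p(C,\gamma_k) \le q - p(\hat C_2,\gamma^a) = q - p(P',\gamma_k) - \tfrac12 .
\]
Moreover the components of $\overline D$ can only gain potential, and only because an $m$-type label loses its $P'$-copy and becomes uniquely appearing, contributing at most $+1$ each; writing $\delta^{\ast}$ for the total truncated change over $\overline D$ this gives $0\le\delta^{\ast}\le m$. Using $\max(0,X)-\max(0,Y)\le\max(0,X-Y)$ to pass to the truncated sum, $\Delta \le \max\!\big(0,\,q - p(P',\gamma_k) - \tfrac12\big) + m$.

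For part (1), $p(P',\gamma_k) < -\tfrac12$ forces $\lvert E(P')\rvert \le 2$, hence $q+m\le 2$. Substituting $p(P',\gamma_k) = -2 + \tfrac12(q+m)$ and splitting on the sign of the first argument yields $\Delta \le \tfrac12\lvert E(P')\rvert + \tfrac32 \le \tfrac52$ when that argument is nonnegative, and $\Delta\le m\le 2$ otherwise; either way $\Delta\le 3$, so $p(\gamma_{k+1})\le p(\gamma_k)$, and part (3) is the mirror image.

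For part (2), a single-edge nesting path is uniquely appearing, so discarding it would drop a label and violate Corollary~\ref{c:valueincrease}; hence the chosen $P'$ is the three-edge path with non-isolated labels $(x,y,z)$ of Definition~\ref{defn:nesting}. Non-isolatedness places the duplicates of $x,y,z$ on other components (the cycle $(x,t,z,t)$ for $x,z$, and the $y$-cycle), so $q=0$, $m=3$, $p(P',\gamma_k)=-\tfrac12$ and $p(\hat C_2,\gamma^a)=0$. Properness makes the defect vanish: if an edge of $P$ at a cut vertex shared a label with the adjacent edge of $P'$ that label would occur three times, so no cut-adjacency is broken and (iii) holds with equality, forcing $p(\hat C_1,\gamma_{k+1}) = p(C,\gamma_k)$ exactly (contributing $0$ to $\Delta$). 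Finally I would verify that truncation is harmless on the $\overline D$-side: the cycles $(x,t,z,t)$ and the $y$-cycle each have potential exactly $0$ in $\gamma_k$ and, once their $x,z$ (resp. $y$) edges become uniquely appearing, potential $2$ and $1$ in $\gamma_{k+1}$, so $\delta^{\ast}=3$ exactly. Hence $\Delta = 0+3 = 3$ and $p(\gamma_{k+1}) = p(\gamma_k)$, with part (4) following by symmetry. The main obstacle is exactly this last accounting — tracking how discarding $P'$ flips the global ``uniquely appearing'' status of labels located elsewhere, and ensuring the $\max(0,\cdot)$ truncation in $p(\gamma)$ neither conceals a gain (which would break the inequality in (1)) nor corrupts the exact count (which would break the equality in (2)); isolating $q$ and $m$ and routing the comparison through the phantom state $\gamma^a$ is what keeps this bookkeeping under control.
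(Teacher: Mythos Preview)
Your proof is correct, but it takes a genuinely different route from the paper's.

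The paper argues directly: from $p(P',\gamma_k) < -\tfrac12$ it reads off that $P'$ has at most two edges, none uniquely labelled; it then simply tallies the at most $\ell\le 2$ duplicate edges that become uniquely appearing, adds the contribution of the fresh edge $f$, and checks the total change in the truncated cycle sum is at most $3$. For the nesting case it verifies by hand that the $(x,t,z,t)$-cycle jumps from potential $0$ to $2$, the $y$-cycle from $0$ to $1$, and $\hat C_1$ stays put, again giving exactly $3$.

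You instead factor the computation through the \emph{phantom} type-(a) state $\gamma^a$, invoke the edge-potential inequality from the proof of Lemma~\ref{lem:moveabound} to compare $\hat C_1,\hat C_2$ against $C$, and then separately account for the effect of deleting $\hat C_2$ via your $q$/$m$ split of $|E(P')|$ (duplicates on $P$ versus on $\overline D$). The key extra ingredient you need---that no label is lost, hence each $P'$-edge has its twin outside $P'$ and carries potential exactly $\tfrac12$---you correctly derive from Corollary~\ref{c:valueincrease}. Your observation that properness forces equality in step~(iii) for the nesting case (a third occurrence would arise otherwise) and that each of the $q$ edges must have potential exactly $\tfrac12$ in $\gamma^a$ (its twin is on $P'$, not on $\hat C_1$) makes your stated equality in~(ii) genuinely exact rather than merely an upper bound.

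What your approach buys is modularity: you reuse Lemma~\ref{lem:moveabound} rather than redoing its edge-by-edge analysis, and the $q/m$ bookkeeping cleanly separates the ``internal'' change on $\hat C_1$ from the ``external'' change on $\overline D$, even yielding the slightly sharper bound $\Delta\le\tfrac52$ in part~(1). The paper's approach is shorter and more self-contained, at the cost of leaving some of the counting implicit.
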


\begin{proof}
By symmetry, it suffices to prove the statement for moves of type (\ref{moveb}).
Let us first prove the upper bound. Since $p(P',\gamma_k) < -\frac 12$, the path $P'$ consists of $\ell \leq 2$ edges none of which has a uniquely appearing label. In particular, $\ell$ edges have potential one larger in $\gamma_{k+1}$ than they have in $\gamma_k$. The label of $f$ is also uniquely appearing in $D_{k+1}$ and the new cycle $\hat{C}_1$ has $\ell - 1$ fewer edges than before. Summing up these different contributions we arrive at 
\[
\sum_{\substack{C \in \text{Comp}(D_{k+1})\\p(C, \gamma_{k+1}) > 0}} p(C, \gamma_{k+1}) \leq \sum_{\substack{C \in \text{Comp}(D_k)\\ p(C, \gamma_k) > 0}} p(C, \gamma_k) + 3.
\]
Since $g_{k+1} = g_k$ this concludes the proof of the first part. 

If $P'$ is a nesting path, then we need to show that equality holds in the above equation. First note that $P'$ cannot be a single edge due to Lemma \ref{l:limitedchoice} and our assumption that Cutter plays as restricted Cutter. In particular, $P'$ has labels $(x,y,z)$ and there will be a cycle $(x,t,z,t)$ whose potential increases from $0$ to $2$ (as now $x$ and $z$ are uniquely appearing) and a cycle consisting of an edge with label $y$ and a nesting path whose potential increases from $0$ to $1$ (as now $y$ is uniquely appearing). Since the label of $f$ is uniquely appearing, it contributes $\frac 32$ to the potential of $\hat C_1$. We  thus have $p(\hat C_1,\gamma_{k+1}) = p(C,\gamma_k)$, and since the potential of all other cycles remains unchanged, we have indeed equality in the above equation, concluding the proof.
\end{proof}

\subsection{Strategies for the Marker-Cutter game}

The results in this section imply Theorem \ref{t:PC}. More precisely, we show that restricted Cutter has a strategy ensuring that the game reaches a state of value at least $\frac43g_0 + 2$, whereas Marker has a strategy against restricted Cutter to make sure that the value of each game state is at most $\frac43g_0 + 4$.

Both of these strategies work by bounding the potential throughout the game. More precisely, restricted Cutter's best strategy is to make sure that the potential never becomes too large.

\begin{theorem}\label{thm:comblowerbound}
Restricted Cutter has a strategy in the combinatorial Marker-Cutter game to ensure that the game reaches a state of value at least $\frac43g_0 + 2$.
\end{theorem}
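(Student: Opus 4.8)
The plan is to give Cutter a strategy that never lets the potential increase, and then to read off the value bound from the position at which the genus runs out. Since
\[
p(\gamma_0) = 4(g_0-g_0) - 3\cdot 0 + 0 = 0,
\]
maintaining $p(\gamma_{k+1}) \le p(\gamma_k)$ at every turn forces $p(\gamma_k) \le 0$ throughout, and by the definition of the potential this rearranges to
\[
3\, v(\gamma_k) \;\ge\; 4(g_0 - g_k) + \sum_{\substack{C \in \mathrm{Comp}(D_k)\\ p(C,\gamma_k) > 0}} p(C,\gamma_k).
\]
Thus if Cutter can drive $g_k$ down to $0$ while holding on to at least $6$ units of positive component potential, the value is forced past $\tfrac43 g_0 + 2$.

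First I would verify that Cutter, playing as restricted Cutter, can always respond without increasing the potential. This is a case analysis on Marker's chosen $v,w$, fed into Lemmas \ref{lem:movedbound}, \ref{lem:moveabound} and \ref{lem:movebcbound}. If $v,w$ lie in different components the move is forced to be of type (\ref{moved}) and Lemma \ref{lem:movedbound}(1) gives $p(\gamma_{k+1}) \le p(\gamma_k)$. If they lie on a common cycle $C$ with arcs $P,P'$, then: when both $p(P,\gamma_k),p(P',\gamma_k) \ge -\tfrac12$ and $g_k \ge 1$ Cutter plays (\ref{movea}) and invokes Lemma \ref{lem:moveabound}(1); when $p(P',\gamma_k) < -\tfrac12$ Cutter plays (\ref{moveb}) via Lemma \ref{lem:movebcbound}(1); and symmetrically Cutter plays (\ref{movec}) via Lemma \ref{lem:movebcbound}(3) when $p(P,\gamma_k) < -\tfrac12$. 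The one gap is $g_k = 0$ together with both arcs of potential $\ge -\tfrac12$, which I would show can only arise once the target value is already attained.

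Next I would argue that, following this strategy, the game cannot terminate while $g_k \ge 1$. By Corollary \ref{c:valueincrease} the value rises by exactly one each turn, so it suffices to show restricted Cutter is never stuck while genus remains. The only way Marker can render both (\ref{moveb}) and (\ref{movec}) illegal is to separate two isolated labels on a single cycle (Lemmas \ref{l:limitedchoice} and \ref{l:nochoice}); but in that event Lemma \ref{l:nochoice} forces the genus-reducing move (\ref{movea}), which is available precisely because $g_k \ge 1$. Combined with the fact that the restricted game is finite (value strictly increasing, no return to an earlier reduction), this should pin the terminal state at $g_k = 0$, where the displayed inequality already yields $v \ge \tfrac43 g_0$.

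The hard part will be extracting the final additive constant, i.e.\ guaranteeing that the positive component potential present when the genus is exhausted is at least $6$ (so that $\tfrac13\sum \ge 2$), equivalently that Cutter can force two further value-increasing moves after $g_k$ first reaches $0$. I would handle this by tightening Cutter's bookkeeping so that the terminal configuration is forced to carry uniquely-appearing labels: each such label contributes $\tfrac32$ to the potential of its cycle, so a small explicitly constructed terminal gadget supplies the required surplus of $6$. The real care is in simultaneously keeping every one of these moves legal for restricted Cutter—ensuring in particular that the forced move (\ref{movea}) above, and the two closing moves, never recreate a reduction of an earlier game state in the sense of Definition \ref{def_equivalence}.
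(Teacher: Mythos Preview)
Your overall plan---use Lemmas \ref{lem:movedbound}, \ref{lem:moveabound}, \ref{lem:movebcbound} to give restricted Cutter a potential-nonincreasing reply, then read off the value at the moment the genus runs out---is exactly the paper's approach, and your case split for Cutter's reply matches the paper's. The substantive gap is in the additive constant. Starting from $p(\gamma_0)=0$ forces you to find $6$ units of positive component potential at the terminal state, and your ``terminal gadget'' paragraph is not a proof: Marker chooses $v,w$, and there is no evident mechanism by which Cutter can force the terminal configuration to carry four uniquely-appearing labels (or otherwise accumulate potential~$6$) while still keeping every reply legal and potential-nonincreasing. The equivalent reformulation---``force two further value-increasing moves after $g_k$ first reaches~$0$''---is likewise unclear, since once $g_k=0$ the strategy may well call for (\ref{movea}) immediately on the very cycle Marker picked, and you have given no way to continue.

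The paper avoids this difficulty by front-loading the potential drop instead of back-loading it. The observation you are missing is that the first two replies of Cutter under this strategy are \emph{never} of type (\ref{movea}): at the start $D_0$ is empty, and after one move every cycle has length at most~$2$, so any arc $P$ or $P'$ has potential $<-\tfrac12$ and the strategy selects (\ref{moveb}), (\ref{movec}) or (\ref{moved}). A short direct check then gives $p(\gamma_2)\le -5$. At the terminal state, the fact that the strategy calls for (\ref{movea}) means both arcs of the relevant cycle $C$ have potential $\ge -\tfrac12$, so $p(C,\gamma_k)\ge 1$; combining, $-5 \ge p(\gamma_k) \ge 4g_0 - 3v(\gamma_k) + 1$, whence $v(\gamma_k)\ge \tfrac43 g_0 + 2$. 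Only \emph{one} unit of positive potential is needed at the end, and it comes for free from the very condition that triggers termination. Replace your final paragraph with this initial-phase analysis and the proof goes through.
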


\begin{proof}
By Lemmas \ref{lem:movedbound}(1), \ref{lem:moveabound}(1), and \ref{lem:movebcbound}(1)\&(3), restricted Cutter has a strategy to ensure that the potential never increases, as long as the moves suggested by these lemmas are always legal for restricted Cutter. However, we note that for each of the moves suggested by this strategy the value of the game state will increase by one each turn. Indeed, this is clear for moves of type (\ref{movea}) and (\ref{moved}) and if an isolated label were to appear only on a path $P$ then $p(P,\gamma) \geq -\frac{1}{2}$ and so the strategy will not call to make a move of type (\ref{movec}) to remove this label, and similarly for moves of type (\ref{moveb}). In particular, none of these moves which are legal for Cutter can be illegal for restricted Cutter since the value of the resulting game state $\gamma_{k+1}$ is larger than the value of any earlier game state $\gamma_i$ with $i \leq k$, and hence $\gamma_k$ cannot be equivalent to a reduction of $\gamma_i$ with $i \leq k$.

It is a simple check that the first two moves of Cutter according to this strategy will be of types (\ref{moveb}), (\ref{movec}), or (\ref{moved}), and so $g_2 = g_0$. In $\chi_2$, there are two different labels, and it is not hard to show that the combined potentials of all components with non-negative potential sum up to at most $1$. Hence we have that $p(\gamma_2) \leq -5$.

If the moves suggested by restricted Cutter's strategy always remain legal, then $v(\gamma_k)$ will become arbitrarily large, and in particular larger than $\frac 43 g_0 + 2$. Therefore, we may assume that there is a $k$ such that the suggested move is not a legal move for restricted Cutter, and hence not legal for Cutter (i.e.\ a move of type (\ref{movea}) with $g_k = 0$). Since the potential never decreased we have that 
\[
-5 \geq p(\gamma_k) = 4g_0 - 3v(\gamma_k) + \sum_{\substack{C \in \text{Comp}(D) \\ p(C, \gamma_k) > 0}} p(C, \gamma_k)
\]
However, since restricted Cutter's strategy calls for a move of type (\ref{movea}) there is a cycle $C$ composed of two paths $P$ and $P'$ with $p(P,\gamma_k),p(P',\gamma_k) \geq -\frac{1}{2}$ and hence $p(C,\gamma_k) = p(P,\gamma_k) + p(P',D_k) + 2 \geq 1$. Hence $-5 \geq p(\gamma_k) = 4g_0 - 3v(\gamma_k) + 1$, giving $v(\gamma_k) \geq \frac43g_0 + 2$.
\end{proof}

Similarly, Marker's best strategy in the restricted Marker-Cutter game is to ensure that from some point on the potential no longer decreases. Lemmas~\ref{lem:movedbound}(2), \ref{lem:moveabound}(2), and \ref{lem:movebcbound}(2)\&(4) provide Marker with conditions to do so.

However, this alone will not suffice; it could be that the potential and the genus both remain constant but $v(\gamma)$ becomes arbitrarily large, provided this is balanced by a corresponding increase in the term 
\[\sum_{\substack{C \in \text{Comp}(D)\\ p(C, \gamma) > 0}} p(C, \gamma)\,.\]

To avoid this issue, Marker will also maintain some control over the collection of components of positive potential. More precisely, from some point on Marker will ensure that the current set of boundary cycles always consists of a set of {\em passive} cycles of non-positive potential, together with a set of {\em active} cycles, given (up to equivalence, cf.\ Definition~\ref{def_equivalence}) by one of the finitely many options depicted in Figure~\ref{fig_Markerstrategy}. Note that active cycles may have positive or negative potential.
%(Josh) : Or zero....is the above bad? It seemed horrible to write positive, negative or zero potential.
Marker will always choose points on the active cycles.

It is, however, possible that previously passive cycles become active again, if Cutter makes moves of type (\ref{moveb}) or (\ref{movec}). More precisely, some labels on passive cycles could become uniquely appearing, making the potentials of these cycles positive. 
%Although he cannot completely avoid this phenomenon, Marker can keep it under careful control by regulating the structure of the boundary cycles - not just of the active ones, but also the passive ones, since they may become active again.
Although he cannot completely avoid this phenomenon, Marker can keep it under control by making sure that whenever it occurs it is caused by a nesting path.

More precisely, if the path $P'$ in a move of type (\ref{moveb}) is a nesting path, then we know by Lemma \ref{l:limitedchoice} that it cannot consist of a single edge. Hence $P'$ is a path of length $3$ with labels $(x,y,z)$, and as mentioned in the proof of Lemma \ref{lem:movebcbound}, exactly two previously passive cycles become active, one labelled by $(x,t,z,t)$ and the other consisting of an edge with label $y$ and a nesting path (where $x$, $y$, and $z$ have become uniquely appearing). A similar observation can be made for moves of type (\ref{movec}).

\usetikzlibrary{hobby}
\usetikzlibrary{decorations.pathmorphing}
\usetikzlibrary{decorations.markings}
\usetikzlibrary{pgfplots.fillbetween}
\usetikzlibrary{decorations.pathmorphing, arrows, calc, shapes, matrix, arrows.meta}

\definecolor{myred}{rgb}{.9,.1,.2}

 \newcommand\SqSmall[6]{%
    \draw[-Latex] (#1, #2) -- (#1, #2+1) ;
     \draw[-Latex] (#1, #2+1) --(#1+1, #2+1);
      \draw[-Latex] (#1+1, #2+1) --(#1+1, #2) ;
       \draw[-Latex](#1+1, #2)--(#1, #2) ;
    \node at (#1-.25, #2+.5) {$#3$}; 
    \node at (#1+.5, #2+1.25) {$#4$}; 
    \node at (#1+1.25, #2+.5) {$#5$}; 
    \node at (#1+.5, #2-.25) {$#6$}; 
    \filldraw (#1, #2) circle (1pt);
     \filldraw (#1, #2+1) circle (1pt);
      \filldraw (#1+1, #2+1) circle (1pt);
       \filldraw (#1+1, #2) circle (1pt) ;
}
 \newcommand\SquigglySq[6]{%
    \draw[-Latex,decorate] (#1, #2) -- (#1, #2+1) ;
     \draw[-Latex] (#1, #2+1) --(#1+1, #2+1);
      \draw[-Latex] (#1+1, #2+1) --(#1+1, #2) ;
       \draw[-Latex](#1+1, #2)--(#1, #2) ;
    \node at (#1-.25, #2+.5) {$#3$}; 
    \node at (#1+.5, #2+1.25) {$#4$}; 
    \node at (#1+1.25, #2+.5) {$#5$}; 
    \node at (#1+.5, #2-.25) {$#6$}; 
    \filldraw (#1, #2) circle (1pt);
     \filldraw (#1, #2+1) circle (1pt);
      \filldraw (#1+1, #2+1) circle (1pt);
       \filldraw (#1+1, #2) circle (1pt) ;
}
\tikzstyle{densely dashed}=          [dash pattern=on 3pt off 2pt]

\tikzset{
number1/.pic={
\begin{scope}[shift={(-.5,0)}]
\filldraw (0, 0) circle (1pt) ;
\filldraw (1, 0) circle (1pt) ;
\draw[-Latex] (0,0) to [bend right] (1,0);
\draw[-Latex, decorate] (1,0) to [bend right] (0,0);

\draw[densely dashed] (0,0) to (1,0);
\end{scope}
}
}

\tikzset{
number2/.pic={

\begin{scope}[shift={(-.5,-.75)}]
\filldraw (0,0) circle (1pt) ;
\filldraw (1,0) circle (1pt) ;
\draw[-Latex] (0,0) to [bend right] (1,0);
\draw[-Latex] (1,0) to [bend right] (0,0);
\node at (.5, .35) {$a$}; 

\filldraw (0, 1.5) circle (1pt) ;
\filldraw (1, 1.5) circle (1pt) ;
\draw[-Latex] (0,1.5) to [bend right] (1,1.5);
\draw[-Latex, decorate] (1,1.5) to [bend right] (0,1.5);
\node at (.5, 1.15) {$a$}; 

\draw[densely dashed] (1,0) to [bend right] (1,1.5);
\end{scope}
}
}

\tikzset{
number3/.pic={
\begin{scope}[shift={(-.5,-1)}]
\draw[-Latex,decorate] (0,0) -- (0,1) ;
     \draw[-Latex] (0,1) --(0,2);
      \draw[-Latex] (0,2) --(1,2);
       \draw [-Latex] (1,2)--(1,1) ;
    \draw [-Latex] (1,1)--(1,0) ;
    \draw [-Latex] (1,0)--(0,0) ;
     \node at (-.25, 1.5) {$a$}; 
    \node at (.5, 2.25) {$b$}; 
    \node at (1.25, 1.5) {$a$}; 
    \node at (.5,-.25) {$b$}; 
    \filldraw (0,0) circle (1pt);
     \filldraw (0,1) circle (1pt);
      \filldraw (0,2) circle (1pt);
       \filldraw (1,2) circle (1pt) ;
       \filldraw (1,1) circle (1pt) ;
       \filldraw (1,0) circle (1pt) ;

\draw[densely dashed] (0,1) to (1,1);
\end{scope}
}
}

\tikzset{
number4/.pic={
\begin{scope}[shift={(-.5,-1.5)}]
\draw[densely dashed] (0,0) to [bend right=60] (0,1);

\SqSmall{0}{2}{a}{b}{a}{c}

\SquigglySq{0}{0}{}{c}{}{b}
\end{scope}
}
}

\tikzset{
number5/.pic={
\begin{scope}[shift={(-1.5,-1.5)}]
\filldraw (0, 2.5) circle (1pt) ;
\filldraw (1, 2.5) circle (1pt) ;
\draw[-Latex] (0,2.5) to [bend right] (1,2.5);
\draw[-Latex, decorate] (1,2.5) to [bend right] (0,2.5);

\draw[densely dashed] (0,1) to [bend right=60] (1,1);

\SqSmall{0}{0}{d}{}{d}{}

\SqSmall{2}{2}{a}{b}{a}{c}

\SqSmall{2}{0}{}{c}{}{b}
\end{scope}
}
}

\tikzset{
number6/.pic={
\begin{scope}[shift={(-1.5,-1.5)}]
\filldraw (0,2) circle (1pt) ;
\filldraw (1, 2) circle (1pt) ;
\draw[-Latex] (0,2) to [bend right] (1,2);
\draw[-Latex] (1,2) to [bend right] (0,2);
\node at (.5, 1.65) {$e$}; 

\filldraw (0, 3) circle (1pt) ;
\filldraw (1, 3) circle (1pt) ;
\draw[-Latex] (0,3) to [bend right] (1,3);
\draw[-Latex, decorate] (1,3) to [bend right] (0,3);

\draw[densely dashed] (0,1) to [bend right=60] (1,1);

\SqSmall{0}{0}{d}{}{d}{e}

\SqSmall{2}{2}{a}{b}{a}{c}

\SqSmall{2}{0}{}{c}{}{b}
\end{scope}
}
}

\tikzset{
number7/.pic={
\begin{scope}[shift={(-1.5,-1.5)}]

\filldraw (0, 1.5) circle (1pt) ;
\filldraw (1, 1.5) circle (1pt) ;
\draw[-Latex] (0,1.5) to [bend right] (1,1.5);
\draw[-Latex, decorate] (1,1.5) to [bend right] (0,1.5);

\draw[densely dashed] (2,1) to [bend left=60] (2,0);

\SqSmall{2}{2}{a}{b}{a}{c}

\SqSmall{2}{0}{}{c}{}{b}
\end{scope}
}
}

\tikzset{
number8/.pic={

\begin{scope}[shift={(-1.5,-1.5)}]
\filldraw (0,1) circle (1pt) ;
\filldraw (1, 1) circle (1pt) ;
\draw[-Latex] (0,1) to [bend right] (1,1);
\draw[-Latex] (1,1) to [bend right] (0,1);
\node at (.5, .5) {$d$}; 

\filldraw (0, 2) circle (1pt) ;
\filldraw (1, 2) circle (1pt) ;
\draw[-Latex] (0,2) to [bend right] (1,2);
\draw[-Latex, decorate] (1,2) to [bend right] (0,2);

\draw[densely dashed] (3,1) to [bend right=60] (3,0);

\SqSmall{2}{2}{a}{b}{a}{c}

\SqSmall{2}{0}{d}{c}{}{b}
\end{scope}
}
}

\tikzset{
number9/.pic={
\begin{scope}[shift={(-1.5,-.5)}]
\filldraw (0,-.5) circle (1pt) ;
\filldraw (1, -.5) circle (1pt) ;
\draw[-Latex] (0,-.5) to [bend right] (1,-.5);
\draw[-Latex] (1,-.5) to [bend right] (0,-.5);

\filldraw (0,.5) circle (1pt) ;
\filldraw (1, .5) circle (1pt) ;
\draw[-Latex] (0,.5) to [bend right] (1,.5);
\draw[-Latex] (1,.5) to [bend right] (0,.5);

\filldraw (0, 1.5) circle (1pt) ;
\filldraw (1, 1.5) circle (1pt) ;
\draw[-Latex] (0,1.5) to [bend right] (1,1.5);
\draw[-Latex, decorate] (1,1.5) to [bend right] (0,1.5);

\draw[densely dashed] (0,-.5) to (1,-.5);

\SqSmall{2}{0}{a}{}{a}{}
\end{scope}
}
}

\tikzset{
number10/.pic={
\begin{scope}[shift={(-1.5,-.5)}]
\filldraw (0,0) circle (1pt) ;
\filldraw (1, 0) circle (1pt) ;
\draw[-Latex] (0,0) to [bend right] (1,0);
\draw[-Latex] (1,0) to [bend right] (0,0);

\filldraw (0, 1) circle (1pt) ;
\filldraw (1, 1) circle (1pt) ;
\draw[-Latex] (0,1) to [bend right] (1,1);
\draw[-Latex, decorate] (1,1) to [bend right] (0,1);

\draw[densely dashed] (2,1) to [bend right=60] (3,1);

\SqSmall{2}{0}{a}{}{a}{}
\end{scope}
}
}

\tikzset{
number11/.pic={
\begin{scope}[shift={(-1.5,-1.5)}]
\filldraw (0,1) circle (1pt) ;
\filldraw (1, 1) circle (1pt) ;
\draw[-Latex] (0,1) to [bend right] (1,1);
\draw[-Latex] (1,1) to [bend right] (0,1);

\filldraw (0, 2) circle (1pt) ;
\filldraw (1, 2) circle (1pt) ;
\draw[-Latex] (0,2) to [bend right] (1,2);
\draw[-Latex, decorate] (1,2) to [bend right] (0,2);

\filldraw (2,2.5) circle (1pt) ;
\filldraw (3, 2.5) circle (1pt) ;
\draw[-Latex] (2,2.5) to [bend right] (3,2.5);
\draw[-Latex] (3,2.5) to [bend right] (2,2.5);
\node at (2.5, 2) {$b$};

\draw[densely dashed] (2,0) to [bend left=60] (3,0);

\SqSmall{2}{0}{a}{b}{a}{}
\end{scope}
}
}

\tikzset{
number12/.pic={

\begin{scope}[shift={(-.5,-.5)}]
\filldraw (0,0) circle (1pt) ;
\filldraw (1, 0) circle (1pt) ;
\draw[-Latex] (0,0) to [bend right] (1,0);
\draw[-Latex] (1,0) to [bend right] (0,0);

\filldraw (0, 1) circle (1pt) ;
\filldraw (1, 1) circle (1pt) ;
\draw[-Latex] (0,1) to [bend right] (1,1);
\draw[-Latex, decorate] (1,1) to [bend right] (0,1);

\draw[densely dashed] (0,0) to (1,0);
\end{scope}
}
}

%\newpage

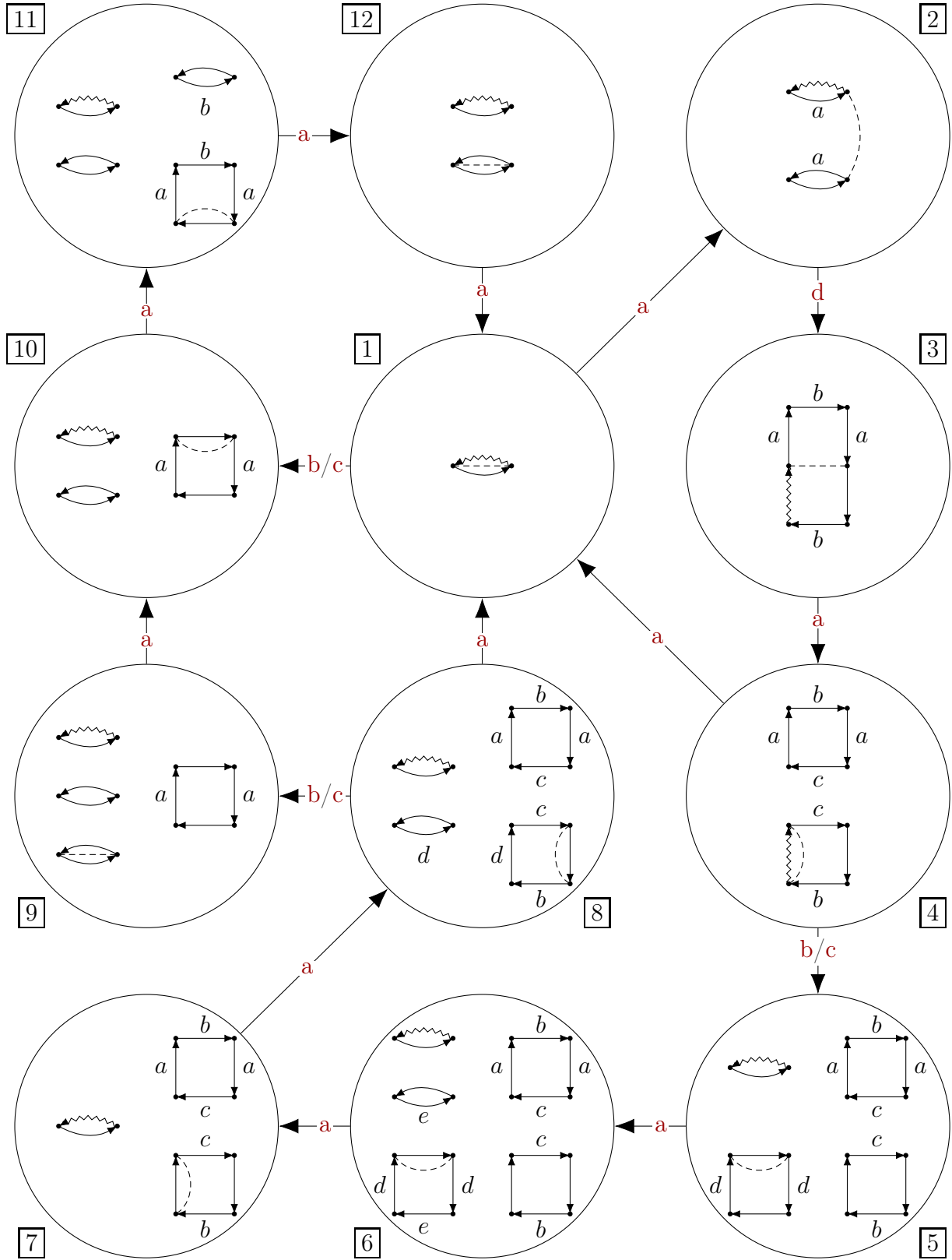
\begin{figure}
\begin{tikzpicture}[scale=0.94, decoration = {zigzag,segment length=4pt,amplitude=1pt},state/.style={draw,circle,minimum size=4.5cm}]

\node[state,label=above left:{\fbox{1}}] (s1) at (0,3){};
\node[state,label =above right:{\fbox{2}}] (s2) at (6.1,9){};
\node[state,label = above right:{\fbox{3}}] (s3) at (6.1,3){};
\node[state,label =below right:{\fbox{4}}] (s4) at (6.1,-3){};
\node[state,label =below right:{\fbox{5}}] (s5) at (6.1,-9){};
\node[state,label =below left:{\fbox{6}}] (s6) at (0,-9){};
\node[state,label =below left:{\fbox{7}}] (s7) at (-6.1,-9){};
\node[state,label =below right:{\fbox{8}}] (s8) at (0,-3){};
\node[state,label =below left:{\fbox{9}}] (s9) at (-6.1,-3){};
\node[state,label =above left:{\fbox{10}}] (s10) at (-6.1,3){};
\node[state,label =above left:{\fbox{11}}] (s11) at (-6.1,9){};
\node[state,label =above left:{\fbox{12}}] (s12) at (0,9){};

\draw[-{Latex[scale=2]}] (s1) to node[pos=.45,inner sep=1pt,fill=white]{\ref{movea}} (s2);
\draw[-{Latex[scale=2]}] (s2) to node[pos=.35,inner sep=1pt,fill=white]{\ref{moved}} (s3);
\draw[-{Latex[scale=2]}] (s3) to node[pos=.35,inner sep=1pt,fill=white]{\ref{movea}} (s4);
\draw[-{Latex[scale=2]}] (s4) to node[pos=.35,inner sep=1pt,fill=white]{\ref{moveb}/\ref{movec}} (s5);
\draw[-{Latex[scale=2]}] (s5) to  node[pos=.35,inner sep=1pt,fill=white]{\ref{movea}} (s6);
\draw[-{Latex[scale=2]}] (s6) to  node[pos=.35,inner sep=1pt,fill=white]{\ref{movea}} (s7);
\draw[-{Latex[scale=2]}] (s7) to  node[pos=.45,inner sep=1pt,fill=white]{\ref{movea}} (s8);
\draw[-{Latex[scale=2]}] (s8) to node[pos=.35,inner sep=1pt,fill=white]{\ref{moveb}/\ref{movec}} (s9);
\draw[-{Latex[scale=2]}] (s9) to node[pos=.35,inner sep=1pt,fill=white]{\ref{movea}} (s10);
\draw[-{Latex[scale=2]}] (s10) to  node[pos=.35,inner sep=1pt,fill=white]{\ref{movea}} (s11);
\draw[-{Latex[scale=2]}] (s11) to  node[pos=.35,inner sep=1pt,fill=white]{\ref{movea}} (s12);

\draw[-{Latex[scale=2]}] (s12) to node[pos=.35,inner sep=1pt,fill=white]{\ref{movea}} (s1);
\draw[-{Latex[scale=2]}] (s4) to node[pos=.45,inner sep=1pt,fill=white]{\ref{movea}} (s1);
\draw[-{Latex[scale=2]}] (s8) to node[pos=.35,inner sep=1pt,fill=white]{\ref{movea}} (s1);

\draw[-{Latex[scale=2]}] (s1) to node[pos=.35,inner sep=1pt,fill=white]{\ref{moveb}/\ref{movec}} (s10);

\pic[shift={(s1)}]{number1};
\pic[shift={(s2)}]{number2};
\pic[shift={(s3)}]{number3};
\pic[shift={(s4)}]{number4};
\pic[shift={(s5)}]{number5};
\pic[shift={(s6)}]{number6};
\pic[shift={(s7)}]{number7};
\pic[shift={(s8)}]{number8};
\pic[shift={(s9)}]{number9};
\pic[shift={(s10)}]{number10};
\pic[shift={(s11)}]{number11};
\pic[shift={(s12)}]{number12};
\end{tikzpicture}

\caption{Possible configurations of active cycles in Marker's strategy, see Table \ref{tab:legend} for an explanation of the diagrams. Arrows from one configuration to another represent restricted Cutter's possible replies from (\ref{movea})--(\ref{moved}).}
\label{fig_Markerstrategy}
\end{figure}
\begin{table}[b]
    \centering
    \setlength{\tabcolsep}{1em} % for the horizontal padding
    \renewcommand{\arraystretch}{1.3}% for the vertical padding
    \begin{tabular}{|c|l|}
        \hline
        \begin{tikzpicture}
        \draw[-Latex] (0,0) to  (1,0);
        \end{tikzpicture} 
        & edge with a uniquely appearing label  \\
        \hline
        \begin{tikzpicture}
        \draw[-Latex] (0,0) to  (1,0);
        \node at (.5, .2) {$x$}; 
        \end{tikzpicture} 
        & edge with label $x$ which is not uniquely appearing  \\
        \hline
        \begin{tikzpicture}[decoration = {zigzag,segment length=4pt,amplitude=1pt}]
        \draw[-Latex,decorate] (0,0) to  (1,0);
        \end{tikzpicture} 
        & nesting path, cf.\ Definition \ref{defn:nesting} \\
        \hline
        \begin{tikzpicture}
        \filldraw (0,0) circle (1pt);
        \filldraw (1,0) circle (1pt);
        \draw[densely dashed] (0,0) to  (1,0);
        \end{tikzpicture} 
        & endpoints of dashed lines are the vertices picked by Marker  \\
        \hline
    \end{tabular}
    \caption{Different types of lines and arrows in the diagrams in Figure \ref{fig_Markerstrategy}.}
    \label{tab:legend}
\end{table}

\begin{theorem}\label{thm:comupperbound}
Marker has a strategy in the restricted Marker-Cutter game to ensure that the value of each game state is at most $\frac43g_0 + \frac{10}3$.
\end{theorem}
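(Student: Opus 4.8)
The whole argument is a monovariant argument driven by the potential $p(\gamma)$. Rearranging its definition and using $g\geq 0$ gives, for every game state $\gamma=(D,\chi,g)$,
\[
3v(\gamma) = 4(g_0-g) - p(\gamma) + \sum_{\substack{C\in\mathrm{Comp}(D)\\ p(C,\gamma)>0}} p(C,\gamma) \;\leq\; 4g_0 - p(\gamma) + \sum_{\substack{C\in\mathrm{Comp}(D)\\ p(C,\gamma)>0}} p(C,\gamma).
\]
So it suffices to give Marker a strategy against restricted Cutter which keeps $p(\gamma)$ bounded below by some constant $p_*$ and keeps the sum of the positive component potentials bounded above by some constant $\Sigma_*$, where $\Sigma_*-p_*=10$; this immediately yields $v(\gamma)\leq \tfrac{4g_0}{3}+\tfrac{10}{3}$. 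Since $p(\gamma_0)=0$, I would first let Marker spend finitely many setup moves to reach the invariant below (tolerating a bounded drop in potential to the value $p_*$), and then argue that in the maintenance phase the potential never decreases again.

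\textbf{The invariant and Marker's moves.} The core of the proof is to maintain the invariant that the current boundary cycles split into \emph{passive} cycles, each of non-positive potential, together with a set of \emph{active} cycles equal (up to the equivalence of Definition~\ref{def_equivalence}) to one of the twelve configurations of Figure~\ref{fig_Markerstrategy}. Marker always chooses his two points $v,w$ on the active cycles, as indicated by the dashed lines in each configuration. The point of these choices is to \emph{force} restricted Cutter's reply into one of the labelled transitions of Figure~\ref{fig_Markerstrategy}: when $v,w$ lie on one cycle and separate two isolated labels, Lemma~\ref{l:nochoice} forces a move of type~(\ref{movea}); when they gather an isolated label appearing on $\hat C_1$, Lemma~\ref{l:limitedchoice} rules out move~(\ref{movec}); and when $v,w$ lie on different cycles, Cutter is forced into move~(\ref{moved}). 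For the transitions marked $(\ref{moveb})/(\ref{movec})$, where Cutter retains a genuine choice, Marker arranges the configuration so that the discarded path is a \emph{nesting path} (Definition~\ref{defn:nesting}), and must check that \emph{either} choice of Cutter lands in a valid configuration.

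\textbf{Potential never decreases; $\Sigma$ stays bounded.} For each forced transition one verifies that the potential does not drop, invoking exactly the second parts of the move lemmas: Lemma~\ref{lem:moveabound}(2) for type-(\ref{movea}) moves (where Marker's choice gathers every label of $C$), Lemma~\ref{lem:movedbound}(2) for type-(\ref{moved}) moves, and Lemma~\ref{lem:movebcbound}(2),(4) for the type-(\ref{moveb})/(\ref{movec}) moves, which preserve the potential precisely because the discarded path is a nesting path. A clean sanity check is that around each directed loop of Figure~\ref{fig_Markerstrategy} the number of type-(\ref{movea}) edges is exactly $\tfrac34$ of the loop length (e.g.\ $9$ of $12$, or $3$ of $4$), so that the contribution $4(g_0-g)-3v(\gamma)$ returns to its previous value by Corollary~\ref{c:valueincrease}; since Marker also returns the active configuration to a valid one, the full potential is unchanged around each loop. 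The subtle bookkeeping is the reactivation phenomenon: a $(\ref{moveb})/(\ref{movec})$ move can make the labels of a nesting path uniquely appearing, turning two previously passive cycles active (one of type $(x,t,z,t)$, one built from an edge $y$ and a further nesting path); Marker must confirm these reactivated cycles constitute admissible active configurations, so that all passive cycles retain non-positive potential and $\Sigma(\gamma)$ is bounded by the maximum total positive potential among the twelve configurations, giving $\Sigma_*$.

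\textbf{Main obstacle.} The substantive work — and the main obstacle — is the finite but extensive verification underlying Figure~\ref{fig_Markerstrategy}: for each of the twelve configurations and each response restricted Cutter is \emph{not} forbidden to make, one must check simultaneously that (i) Marker's prescribed points force the depicted transition, (ii) the potential does not decrease, and (iii) the resulting active cycles, together with any reactivated passive cycles, again form one of the twelve admissible configurations with passive part of non-positive potential. Once this case analysis is complete, reading off $\Sigma_*$ as the largest positive-potential total over the configurations and $p_*$ as the potential reached at the end of the setup phase, and checking $\Sigma_*-p_*=10$, closes the argument.
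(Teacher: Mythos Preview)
Your proposal is correct and follows essentially the same approach as the paper: a preparatory phase dropping the potential to $p_*=-5$, followed by the maintenance phase using the twelve configurations of Figure~\ref{fig_Markerstrategy} (with $\Sigma_*=5$, attained in configurations \fbox{5} and \fbox{9}), so that $\Sigma_*-p_*=10$ as you require. The only piece you leave implicit that the paper spells out is the preparatory strategy itself---Marker repeatedly picks a dummy vertex (or the endpoints of the unique uniquely-appearing edge) until Cutter has twice declined move~(\ref{movea}), landing in configuration~\fbox{1} with $p(\gamma)\geq -5$.
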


\begin{proof}
Marker's strategy consists of two phases. A \emph{preparatory} phase during which all cycles have length $\leq 2$ and non-positive potential, followed by the \emph{potential bounding} phase during which the set of active cycles is (up to equivalence) one of the possibilities shown in Figure \ref{fig_Markerstrategy}.

Marker's strategy during the preparatory phase is as follows. If there is no uniquely appearing label, then choose $v=w=v_\gamma$ a dummy vertex. If there is one, then let $v$ and $w$ be the end points of an edge with a uniquely appearing label. In either case, $v$ and $w$ lie on the same cycle, so Cutter can choose to respond with either a move of type (\ref{movea}), or  a move of types (\ref{moveb}) or (\ref{movec}). The preparatory phase ends after the second time Cutter does not chose type (\ref{movea}).

Since every move of type (\ref{movea}) reduces $g$ by one, Cutter cannot choose (\ref{movea}) arbitrarily often. In particular, the preparatory phase must end after finitely many moves. It is readily verified that before the first move of type (\ref{moveb}) or (\ref{movec}), all cycles have length $1$ and no labels are uniquely appearing. For the subsequent moves before the second occurrence of a move of type (\ref{moveb}) or (\ref{movec}), there is exactly one uniquely appearing label, and all cycles have length at most $2$. By Lemma \ref{l:limitedchoice}, the second move of type (\ref{moveb}) or (\ref{movec}) generates a cycle of length $2$ both of whose labels are uniquely appearing, while all other cycles still have non-positive potential. Choosing this cycle as the unique active cycle will put us in situation \fbox{1} of Figure \ref{fig_Markerstrategy} (where one of the edges with uniquely appearing labels is interpreted as a nesting path).

To bound the potential at the end of the preparatory phase, note that before the first move the potential is $0$. By Lemma \ref{lem:moveabound}, none of the moves of type (\ref{movea}) decreases the potential. The first move of type (\ref{moveb}) or (\ref{movec}) increases $v(D)$ by one without changing $g$ or creating any cycles of positive potential, thus reducing the potential by $3$. The second move of type (\ref{moveb}) or (\ref{movec}) creates one cycle of potential $1$, thus reducing the potential by $2$. Hence at the end of the preparatory phase we have $p(\gamma_k) \geq -5$.

Once the preparatory phase has ended, Marker enters the potential bounding phase. As explained above, at any point during this phase the set of boundary cycles will consist of a set of passive cycles and a set of active cycles where only active cycles are allowed to have positive potential and the set of active cycles will be one of the possibilities indicated in Figure~\ref{fig_Markerstrategy}, see Table \ref{tab:legend} for an explanation of the diagrams. Marker's move in each case is to choose the two endpoints of the dotted line, and the arrows between different game states correspond to Cutter's replies from (\ref{movea})--(\ref{moved}). In most cases, the transitions are easy to check, so we will only outline the arguments and leave the details to the reader.

If we are in \fbox{1}, then the two vertices picked by Marker lie on the same cycle, so Cutter can respond with a move of type (\ref{movea}), or with one of types (\ref{moveb}) or (\ref{movec}). In the first case, all passive cycles retain their current non-positive potential, and in particular we are not forced to make any of them active. The unique active cycle $C$ is replaced by two new cycles $\hat C_1$ and $\hat C_2$, and choosing both of them to be active we end up at \fbox{2}. In the second case, we know by Lemma \ref{l:nochoice} that the nesting path cannot consist of a single edge. By Lemma \ref{l:limitedchoice}, we either get a move of type (\ref{moveb}) with a nesting path $P'$, or a move of type (\ref{movec}) with a nesting path $P$. Consequently, there are two cycles whose potential becomes positive: one with labels $(x,a,z,a)$ where $x$ and $z$ are uniquely appearing, and one consisting of an edge with a uniquely appearing label $y$ and a nesting path. Since the label of the new edge $f$ is also uniquely appearing, this puts us into situation \fbox{10}.

In \fbox{2}, the points chosen by Marker lie in different cycles $C$ and $C'$, so Cutter must respond with a move of type (\ref{moved}). The potential of all passive cycles remains unchanged, so we are not forced to make any of them active and choosing the amalgamated cycle $\hat C$ as the unique active cycle puts us into situation \fbox{3}.

In \fbox{3}, the vertices chosen by Marker separate label $a$ from a uniquely appearing label. By Lemma \ref{l:nochoice}, Cutter has to respond with a move of type (\ref{movea}). The potential of all passive cycles remains unchanged. The unique active cycle $C$ is replaced by two new cycles $\hat C_1$ and $\hat C_2$, and choosing both of them to be active we end up at \fbox{4}. Analogous arguments can be used to treat situations \fbox{5}, \fbox{6}, \fbox{7}, \fbox{9}, \fbox{10}, \fbox{11}, and \fbox{12}, in some of these cases we additionally choose resulting cycles of potential $0$ to become passive.
%(Josh) : Specifically they're always 4-cycles with all edges non-isolate and distinct, no? I don't know if it helps to mention this.

In \fbox{4}, Cutter once again can respond with a move of type (\ref{movea}), or with one of type (\ref{moveb}) or (\ref{movec}). Let $l$ be the label of the new edge $f$. If Cutter chooses a response of type (\ref{movea}), then the path with labels $(b,l,c)$ satisfies the conditions for a nesting path, and if we choose all cycles of potential $0$ to become passive we end up at \fbox{1}. Otherwise, by arguments similar to those from \fbox{1}, we get two new active cycles, one with labels $(x,d,z,d)$ where $x$ and $z$ are uniquely appearing, and one consisting of an edge with a uniquely appearing label and a nesting path. Since $l$ in this case is also uniquely appearing, we arrive at \fbox{5}. 

Finally, in \fbox{8}, if Cutter chooses a response of type (\ref{movea}), then marking all cycles of potential $0$ passive afterwards will put us in situation \fbox{1}. If he chooses a response of type (\ref{moveb}) or (\ref{movec}), then labels $b$,$c$, and $d$  must become uniquely appearing by Lemma \ref{l:limitedchoice}, thus we arrive at \fbox{9}.

Now that we successfully defined a strategy for Marker, it only remains to show that by following this strategy the value of each game state remains bounded. Recall that after the preparatory phase the potential was at least $-5$. Marker's strategy together with Lemmas \ref{lem:movedbound}(2), \ref{lem:moveabound}(2), and \ref{lem:movebcbound}(2)\&(4) ensures that during the potential bounding phase the potential does not decrease. The largest possible value for the term
\[
\sum_{\substack{C \in \text{Comp}(D)\\p(C, \gamma) > 0}} p(C, \gamma)
\]
arises in \fbox{5} and \fbox{9}, where it is equal to 5. So during this phase at any stage we have $-5 \leq 4(g_0 - g) - 3v(\gamma) + 5$, and consequently $v(\gamma) \leq \frac43 g_0 + \frac{10}3$. 
\end{proof}

\section{Bounds for the cop number}

In this section, we use the Marker-Cutter game to derive bounds for the cop number of graphs of a given genus. The following bound is a straightforward consequence of results in the previous sections.

\begin{theorem}
$c(G) \leq \frac{4}{3}g(G) +5$.
\end{theorem}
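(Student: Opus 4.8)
The plan is to assemble the machinery built up in the previous sections into a single chain of inequalities,
\[
c(G) \;\leq\; v(F_{\mathcal P}) + 1 \;\leq\; \tfrac43\, g(F_{\mathcal P}) + \tfrac{10}{3} + 1 \;\leq\; \tfrac43\, g(G) + 5 .
\]
First I would fix a connected graph $G$ and set $g = g(G)$. Since a graph embeds on a surface of genus $g$ precisely when it admits a painting there, I can choose a painting $\mathcal P$ of $G$ on a compact orientable surface $S$ with $g(S) = g$, so that $G \cong G_{\mathcal P}$. Theorem \ref{t:strat} then gives $c(G) = c(G_{\mathcal P}) \leq v(F_{\mathcal P}) + 1$, which reduces everything to bounding $v(F_{\mathcal P})$.

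The second step is to control the surface $F_{\mathcal P} = \bigcup_{D \in \mathcal P} D$. I would first record that $F_{\mathcal P}$ is a compact orientable surface with boundary, and that it is connected: since $G$ is connected so is $G_{\mathcal P}$, and adjacent disks overlap, so any two disks are joined through a chain of pairwise-overlapping disks. Next I would note that $F_{\mathcal P}$ is obtained from $S$ by deleting open regions, so its genus satisfies $g(F_{\mathcal P}) \leq g(S) = g$. This is the one geometric point that warrants a word of justification, although it is intuitively clear that removing open pieces cannot raise the genus.

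For the bound on $v(F_{\mathcal P})$ I would invoke the combinatorial reductions. Applying Theorem \ref{thm:comupperbound} with $g_0 = g(F_{\mathcal P})$ gives Marker a strategy against restricted Cutter keeping the value of every game state at most $\tfrac43 g_0 + \tfrac{10}{3}$; Theorem \ref{l:nonmove} upgrades this to a strategy against unrestricted combinatorial Cutter with the same bound; and Lemma \ref{l:gameequiv} then yields $v(F_{\mathcal P}) \leq \tfrac43 g(F_{\mathcal P}) + \tfrac{10}{3}$, which is exactly the upper half of Theorem \ref{t:PC}. Combining this with $g(F_{\mathcal P}) \leq g$ and the inequality $c(G) \leq v(F_{\mathcal P}) + 1$ from the first step gives $c(G) \leq \tfrac43 g + \tfrac{10}{3} + 1 = \tfrac43 g + \tfrac{13}{3}$, and since $\tfrac{13}{3} < 5$ the claimed bound $c(G) \leq \tfrac43 g(G) + 5$ follows immediately.

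There is essentially no hard step here: all the real work lives in Theorems \ref{t:strat}, \ref{thm:comupperbound}, \ref{l:nonmove} and Lemma \ref{l:gameequiv}. The only things to be careful about are the two structural facts about $F_{\mathcal P}$---connectedness and $g(F_{\mathcal P}) \leq g(G)$---which guarantee that the connected-surface hypothesis of Theorem \ref{t:PC} is met and that the genus appearing in the value bound is at most the genus of $G$. The slack between $\tfrac{13}{3}$ and $5$ shows that this clean statement is comfortably weaker than what the chain actually delivers; recovering the sharper constant of Theorem \ref{t:main} instead requires the more careful analysis of the cops' strategy alluded to after Theorem \ref{t:PC}, rather than the black-box use of $c(G) \leq v(F_{\mathcal P}) + 1$.
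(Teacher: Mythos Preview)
Your proposal is correct and follows essentially the same route as the paper's proof: fix a painting $\mathcal P$ of $G$ on a genus-$g$ surface, apply Theorem~\ref{t:strat} to get $c(G)\leq v(F_{\mathcal P})+1$, and then bound $v(F_{\mathcal P})$ via the upper half of Theorem~\ref{t:PC} together with $g(F_{\mathcal P})\leq g$. You are simply more explicit than the paper in unpacking Theorem~\ref{t:PC} into its ingredients (Theorem~\ref{thm:comupperbound}, Theorem~\ref{l:nonmove}, Lemma~\ref{l:gameequiv}) and in noting the connectedness of $F_{\mathcal P}$, and you record the slightly sharper constant $\tfrac{13}{3}$ before relaxing to $5$.
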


\begin{proof}
Given a graph $G$ of genus $g$, there is a painting $\mathcal{P}$ of $G$ on an orientable surface $S$ of genus $g$, and hence the surface $F_{\mathcal{P}}$ has genus at most $g$. Since $G \cong G_{\mathcal{P}}$, Theorem \ref{t:strat} implies that $c(G) = c(G_{\mathcal{P}}) \leq v(F_{\mathcal P})+1$. Theorem \ref{t:PC} implies that $v(F_{\mathcal P}) \leq  \frac{4}{3}g  + 4$ and hence $c(G) \leq \frac{4}{3}g +5$.
\end{proof}

In the remainder of this section, we describe some refinements to the cops' strategy which give minor improvements on the above bound. More precisely, we get $c(G) \leq \frac{4}{3}g(G) +3$ if $g(G) \not\equiv 2 \mod 3$, and $c(G) \leq \frac{4}{3}g(G) +4$ if $g(G) \equiv 2 \mod 3$. This easily follows from the following theorem.

\main*

The key idea behind these improvements is that cutlines in the Marker-Cutter game can be seen as guardable walks in the cops and robber game and vice versa. More precisely, recall that the cutline $f_k$ chosen in stage $k$ of the Marker-Cutter game induces a walk in $G_{\mathcal P_k}$, and thus (by applying $\psi_k$) gives a walk in $G = G_{\mathcal P}$.

Using this intuition, the preparatory phase in Marker's strategy can be replaced by a short initial play in the cops and robber game. Indeed, assume that $G = G_{\mathcal P}$ for some painting $\mathcal P$, and that after finitely many steps in the cops and robber game, we have cops $c_i$ guarding a walk $W_i$ for $1 \leq i \leq k$. To each $W_i$ we can associate a cutline $f_i$ that induces it. We can further make sure that if $W_i$ and $W_j$ have an endpoint in common, then $\partial f_i$ and $\partial f_j$ intersect, i.e.\ the corresponding cutlines start or end in a common point. If the images of the cutlines are otherwise disjoint, then there is a play in the topological Marker-Cutter game where in round $i$ Marker picks $\partial f_i$ and Cutter picks $f_i$.

Similarly to Theorem \ref{t:strat}, we can show that if $\gamma = (D, \chi, g)$ is a game state obtained by the play described in the above paragraph, then $c(G)$ is at most one more than the maximum $v(D_k)$ that (restricted) Cutter can guarantee if the game starts at game state $\gamma_0 = \gamma$ (unless the initial strategy uses more cops, in which case $c(G)$ is the number of cops used in the initial strategy). The advantage of this is that it allows us to enter the potential bounding phase at a lower potential. To this end, we use a strategy which essentially corresponds to the first part of the proof of Proposition 3.2 in \cite{S01}.

\begin{lemma}
\label{l:initialplay}
Using a play as described above with at most $4$ cops, we can reach $\gamma = (D, \chi, g)$ where $D$ consists of a single cycle labelled $(a,b,a,c)$ and $g = g_0-1$. In particular, labels $b$ and $c$ are uniquely appearing.
\end{lemma}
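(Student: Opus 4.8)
The plan is to adapt the genus-reducing strategy from the first part of the proof of Proposition~3.2 in \cite{S01} and to reinterpret it, through the correspondence between guardable walks and cutlines described above, as an explicit short initial play in the topological Marker-Cutter game whose induced combinatorial game state is the single cycle $(a,b,a,c)$ with $g=g_0-1$. Throughout I assume $g_0\geq 1$, as otherwise there is no genus to reduce. The guiding principle is that each cop guarding a shortest path corresponds (by Lemma~\ref{lem:guardshortestpath}) to a single cutline, so a bounded family of guarded shortest paths yields a short sequence of cuts; the combinatorial state is then read off from the cyclic arrangement of the resulting active boundary arcs, exactly as in the proof of Lemma~\ref{l:gameequiv}.

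First I would reduce the genus. Since $g_0\geq 1$, the surface carries a non-separating simple closed curve, which can be realised in $G$ as a non-separating cycle $Q$; following \cite{S01}, take $Q$ shortest and present it as a controlled union of shortest paths, each guardable by Lemma~\ref{lem:guardshortestpath}. Guarding $Q$ confines the robber to the surface cut along $Q$, and by Lemma~\ref{l:x=yorC=C'} the corresponding cut reduces the genus by one and produces two new boundary circles carrying a common label. The next task is to convert this into two boundary loops carrying two \emph{distinct}, uniquely appearing labels $b$ and $c$. For this I would guard further shortest paths whose cutlines run alongside a boundary component, so that by Lemma~\ref{l:SameCyclesSafe} each one separates off a planar disk; discarding such a disk deletes one of the two occurrences of the shared label, leaving it uniquely appearing, while the newly created arc supplies a fresh uniquely appearing label.

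Finally, a single additional cut amalgamating the two loops---an \arc{x}{y} joining a point of one loop to a point of the other, whose effect is governed by Lemma~\ref{l:xneqyandCneqC'}---merges them into one boundary cycle on which the two sides of this last cutline carry a common label $a$, producing the cyclic pattern $(a,b,a,c)$. The amalgamating and disk-removing cuts do not change the genus, so it remains $g_0-1$, as one checks directly from (\ref{e:Euler}); and since every cut is induced by a guarded shortest path, the whole initial play stays within a budget of $4$ cops. Reading off the combinatorial game state from this play as in Lemma~\ref{l:gameequiv} then gives precisely $\gamma=(D,\chi,g)$ with $D$ a single cycle labelled $(a,b,a,c)$, $g=g_0-1$, and $b,c$ uniquely appearing.

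The main obstacle I anticipate is twofold. The first difficulty is genuinely realising the genus-reducing step by \emph{guardable} walks within the cop budget: a single cop guards a shortest path, not a closed curve, so the non-separating cycle must be exhibited as a carefully chosen union of shortest paths, and it is exactly here that the quantitative details of Schr\"oder's argument in \cite{S01} must be imported. The second, more delicate, difficulty is the boundary bookkeeping ensuring that the final configuration is a \emph{single} cycle with exactly the pattern $(a,b,a,c)$, rather than a longer or more symmetric cycle: the crucial point is that $b$ and $c$ must be made uniquely appearing, which forces the auxiliary cuts to cut off genuine planar disks, and this is precisely what Lemma~\ref{l:SameCyclesSafe} supplies. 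Tracking the boundary components through the whole sequence of cuts by means of Lemmas~\ref{l:x=yorC=C'}, \ref{l:xneqyandCneqC'} and~\ref{l:SameCyclesSafe}, and verifying that at no stage are more than four cops in play, is the technical heart of the argument.
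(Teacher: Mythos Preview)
Your proposal is a plan rather than a proof, and the plan has a genuine gap at the point you yourself flag as ``the second, more delicate, difficulty''. The problem is your use of Lemma~\ref{l:SameCyclesSafe} in step~2. That lemma is a purely topological existence statement: it says that \emph{some} cutline with endpoints on a given boundary component separates off a planar disk. It says nothing about which walk in $G_{\mathcal P}$ that cutline induces, let alone that this walk is a shortest path or otherwise guardable by a single cop. But the entire framework of ``a play as described above'' requires that every cutline used be one that induces a walk you have actually guarded. Invoking Lemma~\ref{l:SameCyclesSafe} breaks this correspondence, and with it your claim that the initial play stays within four cops. A closely related issue is already present in step~1: guarding a non-separating closed walk is not the same as making a single cut with a single label; you need two shortest-path halves (two labels, two cops), and consolidating those into one guarded walk is exactly the non-trivial step you defer to \cite{S01}.

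The paper proceeds differently and avoids Lemma~\ref{l:SameCyclesSafe} entirely. After choosing the shortest non-separating closed walk $A$ and cutting along it, the two resulting boundary circles are joined by a shortest walk $B$ in the cut painting; one then shows (using minimality of $A$) that conversely $A$ becomes a shortest walk once $B$ is cut. This is what allows all of $A$ to be guarded by a single cop $c_a$ after two temporary cops have guarded its halves and $c_b$ has taken up $B$, so that only three labels ($a$ for $A$, and $b,c$ for two parallel cutlines inducing $B$) survive. The resulting combinatorial state is then read off directly as the cycle $(a,b,a,c)$. The point is that every cutline used is explicitly tied to a walk that has been proved guardable in the appropriate restricted graph; your disk-discarding step has no such argument behind it.
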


\begin{proof}
As above assume that $G = {G_\mathcal P}$. Call a closed walk $C$ non-trivial if for some (equivalently any) cutline  $f$ with $f(0) = f(1)$ which induces $C$, we have that $\cut(F_{\mathcal P},f)$ is connected. Let $A$ be a shortest non-trivial closed walk, and assume that $A = (v_1,v_2,\dots,v_k,v_1)$.

Recall that $v_1$ is a disk in $\mathcal P$ by definition of $G_{\mathcal P}$ and let $f$ be a cutline inducing $C$ with $f(0) = f(1)$ in the interior of $v_1$. Using minimality of $A$ it is not hard to show that the image of $f$ intersects $v_1$ in a line segment, and consequently $\cut(v_1,f)$ consists of exactly two disks which we denote by $w_1$ and $w_2$. Let $B'$ be a shortest walk connecting $w_1$ and $w_2$ in $G_{\cut(\mathcal P,f)}$ and let $B$ be the image of $B'$ under the homomorphism given by Lemma \ref{lem:cutpainting-homomorphism}. 

Let $g$ be a cutline inducing $B$ such that $g(0) = g(1) = f(0) = f(1)$, and such that the images of $f$ and $g$ are otherwise disjoint. Let $f'$ be the cutline in $\cut(\mathcal P,g)$ defined by $f'(x) = f(x)$ for $x\in (0,1)$ and choosing $f'(0)$ and $f'(1)$ such that $f'$ becomes continuous. Then $f'$ induces a walk $A'$ in $G_{\cut(\mathcal P,g)}$. We claim that $A'$ is a shortest walk connecting its endpoints. If not, let $\hat A'$ be a shorter such walk and let $\hat f'$ be a cutline inducing $\hat A'$ such that $\hat f'(0) = f'(0)$ and $\hat f'(1) = f'(1)$. Let $\hat f$ be the image of $\hat f'$ under the pasting map (see Definition \ref{defn:pasting}). Since the image of $g$ meets both sides of $\hat f$ it is clear that $\cut(F_{\mathcal P},\hat f)$ is connected, and consequently the image of $\hat A$ under the homomorphism given by Lemma~\ref{lem:cutpainting-homomorphism} is a non-trivial closed walk contradicting the minimality of $A$.

Let $R$ consist of all vertices that are not contained in $A$ or $B$. Since $A'$ is a shortest walk between its endpoints in $\cut(\mathcal P,g)$, by Lemma \ref{lem:relativeguard} we conclude that $A$ is guardable in the subgraph $G_A$ of $G$ induced by $R \cup A$. Similarly, $B$ is guardable in the subgraph $G_B$ of $G$ induced by $R\cup B$.

For the strategy, label the four cops by $c_a,c_b,c_c, c_d$. Then $P_1 = (v_1,\dots,v_{\lfloor k/2 \rfloor})$ and  $P_2 = (v_{\lceil k/2 \rceil},\dots, v_k)$ must be shortest paths between their endpoints, otherwise we could find a shorter non-trivial walk $A$. In particular, by Lemma \ref{lem:guardshortestpath} we can use $c_c$ and $c_d$ to guard $P_1$ and $P_2$ in $G$ respectively. Next use cop $c_b$ to guard $B$ in $G_B$ (note that the robber must stay in $G_B$ to avoid capture). Then use $c_a$ to guard $A$ in $G_A$. From this point on, if the robber enters $A$, he is caught by $c_a$, and if he enters $B$, then he is caught by $c_b$. We may thus release $c_c$ and $c_d$ from their respective guarding duties, and reassign $c_c$ to guard $B$ together with $c_b$.

Choosing cutlines $f$,$g$, and an appropriately chosen second cutline inducing $B$, it is not hard to see that we reach the claimed game state.
\end{proof}

\begin{proof}[Proof of Theorem \ref{t:main}]
We will show that, starting from the configuration given in Lemma \ref{l:initialplay}, we can either maintain $v(\gamma) \leq \frac 43 g_0 + \frac 73$ throughout the game, in which case we can apply Theorem \ref{t:strat} to deduce the claimed bound, or we can achieve $v(\gamma) \leq \frac 43 g_0 - \frac 13$ with $g = 1$. In this case, we can switch back to the cops and robbers game and use $3$ additional cops to catch the robber.

So, let assume that we started the game according to Lemma \ref{l:initialplay}, and let the unique cycle in the resulting game state be labelled by $(a_0,b_0,a_0,c_0)$. In the first move, Marker chooses the endpoints of the edge labelled by $c_0$. By Lemma \ref{l:nochoice}, Cutter must respond with a move of type (\ref{movea}), and we arrive at a game state $\gamma = (D, \chi, g)$ where $g = g_0-2$, and $D$ consists of a cycle of length $2$ with labels $(c_0, d_0)$, and a cycle of length $4$ with labels $(a_0,b_0,a_0,c_0)$. 

We now follow the strategy given in  Figure \ref{fig_Markerstrategy}, pretending that the path with labels $(a_0,c_0,a_0)$ was an edge with uniquely appearing label $\hat a_0$. Note that this is possible, because the path contains the isolated label $a_0$ and thus can play the role of an edge with a uniquely appearing label in all applications of Lemmas \ref{l:limitedchoice} and \ref{l:nochoice}. Furthermore, the path has the same contribution to the potential as an edge with a uniquely appearing label whence Lemmas \ref{lem:movedbound}, \ref{lem:moveabound}, and \ref{lem:movebcbound} (if we allow the path with labels $(a_0,c_0,a_0)$ as a nesting path) can be used to show that applying the strategy given by Figure \ref{fig_Markerstrategy} does not decrease the potential. Finally observe that the game state $\gamma$ corresponds to situation \fbox{1} of Figure  \ref{fig_Markerstrategy} (with the unique active cycle labelled with labels $(\hat a_0, b_0)$).

Note that the potential of the game state described in Lemma \ref{l:initialplay} is $4-9+2=-3$. Thus, as long as $g \geq 1$ we have $-3 \leq g_0 - 4 - 3 v(\gamma) + 5$, and consequently $v(\gamma) \leq \frac 43 g_0 + \frac 43$ by similar computations as in Theorem \ref{t:PC}. Furthermore, if $g=0$ and 
\[
\sum_{\substack{C \in \text{Comp}(D)\\p(C, \gamma) > 0}} p(C, \gamma) \leq 4,
\]
then we obtain  $v(D) \leq \frac 43 g_0 + \frac 73$ as claimed.

We hence only need to rule out that the sum is equal to $5$ while $g=0$, in other words, we need to avoid arriving at \fbox{5} or \fbox{9} of Figure \ref{fig_Markerstrategy} with $g = 0$. Observe that this can only happen, if a few steps earlier we are at \fbox{2} with $g\in \{1,4\}$.

If we ever encounter situation \fbox{2} with $g  = 4$, we proceed as follows. Since all cycles in $D$ have potential $0$, we know that the label $\hat a_0$ appears on a cycle of length $2$ whose only other label ($x$, say) is not uniquely appearing. In other words, there is a cycle with labels $(a_0,c_0,a_0,x)$, and pretending that the path with labels $(a_0,x,a_0)$ is an edge with uniquely appearing label $\tilde a_0$ we arrive at \fbox{2} with active cycles labelled $(\tilde a_0, c_0)$ and $(c_0,d_0)$. In particular, the nesting path is now a single edge with uniquely appearing label. Thus, if Marker keeps playing the strategy given by Figure \ref{fig_Markerstrategy}, then in situation \fbox{4} Cutter cannot choose a move of type (\ref{moveb}) or (\ref{movec}), hence we arrive at \fbox{1} with $g=2$. If Cutter's next reply is (\ref{movea}), then we arrive at \fbox{2} with $g  = 1$, and consequently we can use the strategy below. If it is of type (\ref{moveb}) or (\ref{movec}), then by sticking to the strategy in Figure \ref{fig_Markerstrategy} we finish the game at \fbox{12}.

If we encounter situation \fbox{2} with $g  = 1$, we switch back to the cops and robber game. Note that at this point all cycles have potential $0$, and similar calculations as in Theorem \ref{t:PC} give $v(\gamma) \leq \frac 43 g_0 - \frac 13$. In other words, in the cops and robber game we have at most $\frac 43 g_0 - \frac 13$ cops guarding some subgraphs of $G$, and since $g=1$ we know that the graph spanned by the vertices that the robber can still visit without being caught has genus at most $1$. In particular, by results from \cite{L19}, we can catch the robber using at most $3$ additional cops thus proving Theorem \ref{t:main}.
\end{proof}

\bibliographystyle{plain}
\bibliography{cop}

\end{document}